\documentclass[11pt]{amsart}

\usepackage[T1]{fontenc}
\usepackage[utf8]{inputenc}
\usepackage{amsmath,amssymb,amsthm,mathtools}
\usepackage{booktabs,tabularx}
\usepackage{newtxtext,newtxmath}
\usepackage[a4paper,margin=0.92in]{geometry}
\usepackage{microtype}
\usepackage{enumitem}
\usepackage{xcolor}
\usepackage{hyperref}

\definecolor{LinkBlue}{HTML}{173B57}
\definecolor{CiteRed}{HTML}{7A2E2E}
\hypersetup{
  colorlinks=true,
  linkcolor=LinkBlue,
  citecolor=CiteRed,
  urlcolor=LinkBlue,
  pdfdisplaydoctitle=true,
  pdflang={en-US},
  pdftitle={Heisenberg Uniqueness Pairs for a Hyperbola Branch: Supercritical Nonuniqueness for Shifted Lattice Crosses},
  pdfauthor={Zhiqiang Wan},
  pdfsubject={Supercritical nonuniqueness for shifted lattice crosses on a hyperbola branch},
  pdfkeywords={Heisenberg uniqueness pairs; shifted lattice cross; transfer operator; Klein-Gordon equation}
}

\newtheorem{theorem}{Theorem}[section]
\newtheorem{proposition}[theorem]{Proposition}
\newtheorem{lemma}[theorem]{Lemma}
\newtheorem{corollary}[theorem]{Corollary}
\theoremstyle{definition}
\newtheorem{definition}[theorem]{Definition}
\theoremstyle{remark}
\newtheorem{remark}[theorem]{Remark}

\newcommand{\R}{\mathbb R}
\newcommand{\C}{\mathbb C}
\newcommand{\Z}{\mathbb Z}
\newcommand{\N}{\mathbb N}
\newcommand{\T}{\mathbb T}
\newcommand{\one}{\mathbf 1}
\newcommand{\cA}{\mathcal A}
\newcommand{\cE}{\mathcal E}
\newcommand{\cL}{\mathcal L}
\newcommand{\cP}{\mathcal P}
\newcommand{\Var}{\operatorname{Var}}
\newcommand{\Ran}{\operatorname{Ran}}
\newcommand{\ess}{\mathrm{ess}}
\newcommand{\norm}[1]{\left\lVert#1\right\rVert}
\newcommand{\dd}{\,\mathrm{d}}

\setlist[enumerate]{leftmargin=1.65em,itemsep=.12em,topsep=.25em}
\setlist[itemize]{leftmargin=1.5em,itemsep=.12em,topsep=.25em}
\allowdisplaybreaks

\title[Supercritical nonuniqueness on a hyperbola branch]
{Heisenberg Uniqueness Pairs for a Hyperbola Branch:\\
Supercritical Nonuniqueness for Shifted Lattice Crosses}
\author{Zhiqiang Wan}
\address{School of Mathematical Sciences, University of Science and Technology of China,
No. 96 Jinzhai Road, Baohe District, Hefei, Anhui Province, China}
\email{ZhiQiang\_Wan576@mail.ustc.edu.cn}

\begin{document}
\frenchspacing

\begin{abstract}
We study Heisenberg uniqueness for the positive hyperbola branch and
shifted lattice crosses in the supercritical regime
$q=\alpha\gamma>1$.  We resolve the infinite-dimensionality clause of
the arbitrary-shift problem posed by Giri and Manna: for arbitrary
shifts on both arms, the normalized pre-annihilator is
infinite-dimensional.  More precisely, every
$v\in BV((1,q))$ has a global $BV$ pre-annihilating extension; the
extension is unique unless both twisting phases are trivial, in which
case its ambiguity is one-dimensional.  The proof reduces the
annihilation conditions to a graph equation for a twisted
Perron--Frobenius operator and combines a phase-uniform Lasota--Yorke
estimate with peripheral spectral rigidity.  We also give an exact
operator-theoretic normal form for the entire $L^1$ pre-annihilator in
terms of the maximal convergence domain of the associated Green series.
Writing $Q$ for the twisted product and $A$ for the forcing operator, we
show that $Q$ has the closed unit disk as its spectrum on $L^1((0,1))$,
that $\Ran(I-Q)$ is not closed, and that, outside a countable set of
algebraic values of $q>1$, the operator
$\sum_{j=0}^{N-1}Q^jA:L^1((1,q))\to L^1((0,1))$ has norm $2N$ for every
$N\ge1$.
\end{abstract}

\keywords{Heisenberg uniqueness pairs, shifted lattice cross,
 transfer operator, Klein--Gordon equation}
\subjclass[2020]{Primary 42B10; Secondary 37C30, 37A46, 47A10, 47A53}

\maketitle

\section{Introduction}

A Heisenberg uniqueness pair consists of a curve $\Gamma\subset\R^2$
and a set $\Lambda\subset\R^2$ with the following property: if a finite
complex Borel measure $\mu$, supported on $\Gamma$ and absolutely
continuous with respect to arc length, has Fourier transform vanishing on
$\Lambda$, then $\mu=0$.  Hedenmalm and Montes--Rodr\'{\i}guez introduced
this formulation in connection with the uncertainty principle and the
Klein--Gordon equation \cite{HM2011}.  The definition has several basic
stability properties.  Since $\widehat\mu$ is continuous, only the closure
of $\Lambda$ matters, and enlarging $\Lambda$ preserves uniqueness.  We
shall also use the standard affine invariances from
\cite[(inv-1)--(inv-2)]{HM2011}.  For $x_*,\xi_*\in\R^2$,
\begin{equation*}
 \big(\Gamma+\{x_*\},\Lambda+\{\xi_*\}\big)\ \mathrm{is\ a\ HUP}
 \quad\Longleftrightarrow\quad
 (\Gamma,\Lambda)\ \mathrm{is\ a\ HUP}.
\end{equation*}
If $T:\R^2\to\R^2$ is an invertible linear transformation with adjoint
$T^*$, then
\begin{equation*}
 \big(T^{-1}(\Gamma),T^*(\Lambda)\big)\ \mathrm{is\ a\ HUP}
 \quad\Longleftrightarrow\quad
 (\Gamma,\Lambda)\ \mathrm{is\ a\ HUP}.
\end{equation*}
The independent translations of the two arms of the crosses studied
below are not simultaneous translations of this form; they produce two
genuine phase parameters.

The algebraic structure of the supporting curve gives a second
interpretation.  If a polynomial $P$ vanishes on $\Gamma$, then, for the
Fourier transform normalized in \eqref{eq:Fourier-convention},
\begin{equation*}
 P\!\left((\pi i)^{-1}\partial_\xi,
          (\pi i)^{-1}\partial_\eta\right)\widehat\mu(\xi,\eta)=0.
\end{equation*}
For the hyperbola $x_1x_2=1$, this identity becomes
\begin{equation*}
 \left(\partial_\xi\partial_\eta+\pi^2\right)\widehat\mu=0,
\end{equation*}
the $(1+1)$-dimensional Klein--Gordon equation in characteristic
coordinates.  A HUP for the hyperbola is therefore a uniqueness set for
the Fourier transforms of the relevant Klein--Gordon data.  This also
explains why lattice crosses are natural: their arms lie in the two
characteristic directions.  The theory has subsequently been developed
for parallel lines, conic sections, algebraic curves and quadratic
hypersurfaces; see
\cite{BlasiBabot2013,Bagchi2018,GiriSrivastava2017,JamingKellay2018,
GrochenigJaming2020,Sjolin2013,GoncalvesRamos2022}.

For the full hyperbola and the unshifted lattice cross
\begin{equation*}
 (a\Z\times\{0\})\cup(\{0\}\times b\Z),
\end{equation*}
Hedenmalm and Montes--Rodr\'{\i}guez proved that uniqueness holds exactly
when $ab\le1$ \cite{HM2011}.  Passing to the positive branch
$\Gamma_+=\{(t,t^{-1}):t>0\}$ changes the critical product: uniqueness
holds for $ab<4$, the pre-annihilator is one-dimensional for $ab=4$, and
it is infinite-dimensional for $ab>4$ \cite{CHM2014}.  The proof converts
the Fourier conditions into a
problem for Perron--Frobenius operators induced by Gauss-type maps.  The
connections with Hilbert transforms, one-sided Fourier information and
Gauss dynamics were developed further in \cite{HM2020,HM2021}.  Thus the
sharp density transition is encoded by the spectral behavior of a
reciprocal dynamical system.

A one-arm shift of the lattice cross was proposed in
\cite[Section~7(a)]{HM2011}.  Rational and more general perturbations
for the full hyperbola were considered in
\cite{GR2021,GRCorr2022,GM2024,RadchenkoRamos2024}.  For the positive
branch, the closest shifted predecessors are Giri's rational one-arm
construction \cite{Giri2020}, the arbitrary-shift subcritical theorem
of Giri and Manna \cite{GM2024}, and their later arithmetic results
\cite{GM2026}.  The last paper asks whether the pre-annihilator is
infinite-dimensional for arbitrary shifts throughout the supercritical
regime and how the entire pre-annihilator may be characterized.
Theorem~\ref{thm:intro-main} settles its infinite-dimensionality clause
for arbitrary shifts on both arms and every $q>1$.  The problem also
belongs to the broader theory of Fourier uniqueness and interpolation
represented by
\cite{BakanEtAl2021,HMHyperbolic2026,RV2019,RS2022,KNS2025}.

We next explain the normalization used here.  For $\alpha,\gamma>0$ and
real shifts $\theta_1,\theta_2$, set
\begin{equation*}
 \Lambda_{\alpha,\gamma,\theta_1,\theta_2}
 =
 \big((2\alpha\Z+\theta_1)\times\{0\}\big)
 \cup
 \big(\{0\}\times(2\gamma\Z+\theta_2)\big).
\end{equation*}
The physical spacings are $2\alpha$ and $2\gamma$, so their product is
$4\alpha\gamma$.  The critical branch product $4$ consequently becomes
$q=\alpha\gamma=1$.  At the level of the density, the substitution
$x=\alpha t$ gives the isometry
\begin{equation*}
 f\longmapsto F,\qquad F(x)=\alpha^{-1}f(x/\alpha).
\end{equation*}
It normalizes the first spacing to $2$, leaves
\begin{equation*}
 q=\alpha\gamma
\end{equation*}
as the only geometric parameter, and records the two shifts through
\begin{equation*}
 \zeta=e^{\pi i\theta_1/\alpha},\qquad
 \eta=e^{\pi i\theta_2/\gamma}.
\end{equation*}
We denote the isometric image of the original pre-annihilator by
$\cA_{q,\zeta,\eta}$ and call it the \emph{normalized
pre-annihilator}.  Thus the adjective ``normalized'' refers only to an
equivalent change of variables; it imposes no additional condition and
preserves both the dimension of the pre-annihilator and the HUP
alternative.  This is the shifted version of the scaling reductions in
\cite{HM2011,CHM2014}.

\begin{table}[t]
\caption{Closest results, in the normalized notation
$q=\alpha\gamma$.}
\label{tab:prior-results}
\small
\setlength{\tabcolsep}{3.2pt}
\renewcommand{\arraystretch}{1.1}
\begin{tabularx}{\textwidth}
{@{}>{\raggedright\arraybackslash}p{2.15cm}
>{\raggedright\arraybackslash}p{2.15cm}
>{\raggedright\arraybackslash}p{2.0cm}
>{\raggedright\arraybackslash}X@{}}
\toprule
Reference & shifts & parameter range & conclusion \\
\midrule
\cite{CHM2014}
 & none
 & $q>1$
 & infinite-dimensional pre-annihilator on $\Gamma_+$ \\
\cite{Giri2020}
 & one rational arm; $\alpha=1$, $\theta_1=2/p$, $\theta_2=0$,
 $p$ a positive integer
 & $q=\gamma>p$
 & infinite-dimensional pre-annihilator on $\Gamma_+$ \\
\cite{GM2024}
 & arbitrary two-arm
 & $q<1$
 & uniqueness, including the positive-branch formulation \\
\cite{GM2026}
 & arbitrary two-arm shifts
 & $q=1$
 & conditional modulus rigidity; the critical pre-annihilator has
 dimension at most one, but Theorem~1.2 does not establish a nonzero
 element \\
\cite{GM2026}
 & special rational shifts
 & selected ranges of $q>1$
 & special supercritical normal forms and infinite-dimensionality;
 leaves Open Problem~1.4 on $\Gamma_+$ \\
Present paper
 & arbitrary two-arm
 & every $q>1$
 & $BV$ extension theorem, infinite-dimensionality, and an
 operator-theoretic $L^1$ normal form on $\Gamma_+$ \\
\bottomrule
\end{tabularx}
\end{table}

The reduction to Gauss dynamics also singles out the regularity class
used in the proof.  Although $L^1(\R_+)$ is the ambient space for
densities of admissible measures, the positive-branch argument of
Canto--Mart\'{\i}n, Hedenmalm and Montes--Rodr\'{\i}guez shows that the
transfer operator acquires its useful spectral structure on $BV$: it
preserves bounded variation, its unimodular eigenspaces lie in $BV$, and
bounded-variation data on the middle interval admit global
pre-annihilating extensions \cite{CHM2014}.  The same regularity class is
retained in the later constructions \cite{GM2024,GM2026}.  Indeed, after
one-sided periodization and reciprocal inversion, the normalized
annihilation conditions are governed on $(0,1)$ by the countable-branch
map
\begin{equation*}
 T_q(x)=\{q/x\}_1.
\end{equation*}
Its inverse branches are monotone M\"obius maps.  Variation controls their
distortion and the jumps at branch endpoints, while the compact embedding
$BV((0,1))\hookrightarrow L^1((0,1))$ provides the strong--weak
compactness needed to separate the peripheral dynamics.

To obtain this spectral separation in the presence of shifts, we
establish a phase-uniform variation estimate.  The contraction of
variation originates in the work of Lasota and Yorke on piecewise
monotone maps \cite{LasotaYorke1973}.  Since $T_q$ has countably many
monotonicity intervals, the branch summation is organized in the
bounded-variation framework of Rychlik \cite{Rychlik1983}.  Once the
resulting strong--weak inequality is available, Hennion's compactness
criterion converts it into a bound for the essential spectral radius
\cite{Hennion1993}.  The unimodular branch factors $\zeta^n$ and
$\eta^n$ change phases but not absolute variations; Section~3 makes this
uniformity precise.

For $q>1$, the intervals $(0,1)$, $(1,q)$ and $(q,\infty)$ give the
three-piece decomposition
\begin{equation*}
 F=F_0+v+F_\infty.
\end{equation*}
We call $v=F|_{(1,q)}$ the \emph{middle restriction}, or the
\emph{middle datum}.  Thus $F$ has \emph{zero middle restriction}
precisely when it vanishes almost everywhere on $(1,q)$; no vanishing on
the other two intervals is implied.  The transfer-operator argument first
identifies the regular part of the normalized pre-annihilator.  We
therefore set
\begin{equation*}
 \cA^{BV}_{q,\zeta,\eta}
 :=\cA_{q,\zeta,\eta}\cap BV(\R_+)
\end{equation*}
and call this intermediate space the \emph{global $BV$ core}.
Equivalently, by Lemma~\ref{lem:tail-BV}, its elements are precisely the
normalized pre-annihilators whose restrictions to $(0,1)$ and $(1,q)$
belong to $BV((0,1))$ and $BV((1,q))$, respectively.  We first determine
the restriction map on this core; the Green-domain construction in
Section~5 then connects that result to the direct-sum normal form of the
entire $L^1$ pre-annihilator.

\begin{theorem}\label{thm:intro-main}
Let $\alpha,\gamma>0$, let
$\theta_1,\theta_2\in\R$, and suppose that
$q=\alpha\gamma>1$.
\begin{enumerate}[label=\textup{(\roman*)}]
 \item Every $v\in BV((1,q))$ is the restriction to $(1,q)$ of an
 element of $\cA^{BV}_{q,\zeta,\eta}$.
 \item If $(\zeta,\eta)\ne(1,1)$, this extension is unique within the
 $BV$ core.  If $(\zeta,\eta)=(1,1)$, the extensions form an affine
 line whose direction is a fixed nonzero annihilator with zero middle
 restriction.
 \item The pre-annihilator of the shifted lattice cross is
 infinite-dimensional.  Consequently,
 $(\Gamma_+,\Lambda_{\alpha,\gamma,\theta_1,\theta_2})$ is not a
 Heisenberg uniqueness pair.
\end{enumerate}
\end{theorem}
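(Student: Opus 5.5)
The plan is to follow the Canto-Martín--Hedenmalm--Montes-Rodríguez scheme, with the shifts carried as unimodular phases. First I will write the annihilation conditions explicitly. Vanishing of $\widehat\mu$ on the first arm $(2\Z+\theta_1)\times\{0\}$, after the normalization $x=\alpha t$, says that the $2$-periodization of $F$ twisted by $\zeta^n$ vanishes on one period. Vanishing on the second arm, after the reciprocal substitution $x\mapsto q/x$, says the same for $x^{-2}F(q/x)$ twisted by $\eta^n$. Split $F=F_0+v+F_\infty$ along $(0,1)$, $(1,q)$, $(q,\infty)$. Folding the branches lying in $(1,\infty)$ back to $(0,1)$ expresses $F_\infty$ through $F_0$ and $v$ via a twisted Perron--Frobenius operator; inversion then returns $F_0$ to itself. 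The two conditions thus become a single graph equation
\[
 (I-Q)F_0=Av,\qquad Q=\cL_{q,\eta}\cL_{1,\zeta},
\]
where $\cL_{\beta,\omega}$ is the transfer operator of $x\mapsto\{\beta/x\}_1$ with branch weights $\omega^n$, and $A:L^1((1,q))\to L^1((0,1))$ is the forcing. Conversely, any solution $F_0$ produces $F_\infty$ by the folding formula. Lemma~\ref{lem:tail-BV} then shows that $F\in BV(\R_+)$ exactly when $F_0\in BV((0,1))$ and $v\in BV((1,q))$. Part (i) therefore reduces to solving this equation in $BV((0,1))$, and part (ii) to computing $\ker(I-Q)$ on $BV$.

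Second, I will prove the phase-uniform Lasota--Yorke inequality from Section~3,
\[
 \Var(Q^nf)\le C\theta^n\Var f+C_n\|f\|_{L^1},\qquad \theta<1,
\]
using Rychlik's summation over the countably many Möbius branches. Unimodular branch weights change neither $|{\cdot}|$ nor the variation bounds, so the constants do not depend on the phases. Hennion's theorem, together with the compact embedding $BV\hookrightarrow L^1$, then gives an essential spectral radius below $1$ on $BV$. Hence $I-Q$ is Fredholm of index zero on $BV$, and $1$ is either in the resolvent set or an isolated eigenvalue of finite multiplicity.

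The key step, and the one I expect to be the main obstacle, is peripheral rigidity. I must show that $\ker_{BV}(I-Q)=0$ when $(\zeta,\eta)\ne(1,1)$, and that it is one-dimensional, spanned by the invariant density, when $(\zeta,\eta)=(1,1)$. My first approach is the pointwise bound $|Qf|\le Q_{1,1}|f|$, where $Q_{1,1}$ is the untwisted product. If $Qf=f$, then $Q_{1,1}|f|-|f|$ is nonnegative and has integral zero, so $|f|$ is $Q_{1,1}$-invariant. Ergodicity of the composed Gauss-type map with full-branch structure gives $|f|=c\,h$ with $h>0$ a.e. Equality in the triangle inequality then forces the phase $\varphi=f/|f|$ to satisfy $\varphi(y)=\zeta^n\eta^m\varphi(x)$ along every branch pair. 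Because the first branches are full, $\varphi$ must be constant on branches, and comparing adjacent branches gives $\zeta=1$ and $\eta=1$. This uses exactness of the untwisted dynamics and the assumption $q>1$, which makes the gaps harmless. The case $(1,1)$ is the CHM setting, where the kernel is spanned by $h$; through the folding formula this $h$ yields the nonzero annihilator with zero middle restriction.

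With the kernel known, I conclude as follows. In the twisted case, $I-Q$ is injective and Fredholm of index zero on $BV$, hence invertible there, which gives existence and uniqueness of the $BV$ extension. In the untwisted case the range has codimension one, annihilated by the functional $\int_0^1$. I need $\int_0^1 Av=0$, which I expect from mass conservation of the folding (as in CHM), or else must correct by a compatibility adjustment; existence then follows, and the ambiguity is the line through the annihilator built from $h$. For part (iii), the restriction map $\cA^{BV}\to BV((1,q))$ is onto by (i), and $BV((1,q))$ is infinite-dimensional, so the pre-annihilator is infinite-dimensional. In particular it is nonzero, so the pair is not a Heisenberg uniqueness pair.
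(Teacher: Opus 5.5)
Your architecture is the paper's: twisted periodization to $(I-Q)F_0=Av$, a phase-uniform Lasota--Yorke bound plus Hennion, Fredholm index zero, peripheral rigidity, and mean-zero forcing. However, the step you yourself call the crux is asserted rather than proved.

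From $Qf=f$ you correctly get $|f|=c\rho_q$. Equality in the triangle inequality then gives, for all large $n,m$ and the full two-step branch $\psi_{n,m}(x)=q/(n+q/(m+x))$, the relation $\varphi=\eta^n\zeta^m\,\varphi\circ\psi_{n,m}$. Fullness of the branches yields only this functional equation; it does not by itself make $\varphi$ constant. The paper closes the step with a regularity input and a limiting argument.

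First, $\operatorname*{ess\,inf}\rho_q>0$, proved by spreading a local lower bound through a full cylinder (Lemmas~\ref{lem:full-cylinder} and~\ref{lem:positive-spectrum}). Mere a.e.\ positivity, which is all you state, would not give $\varphi=f/(c\rho_q)\in BV((0,1))$.

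Second, iterate the relation along the single contraction $\psi_{n,m}$. This gives $\varphi(x)=(\eta^n\zeta^m)^k\varphi(\psi_{n,m}^kx)$ with $\psi_{n,m}^kx\to p_{n,m}$ monotonically. The $BV$ function $\varphi$ has a unimodular one-sided limit at the fixed point $p_{n,m}$, so the powers $(\eta^n\zeta^m)^k$ must converge. This forces $\eta^n\zeta^m=1$ for all large $n,m$, and comparing consecutive indices gives $\zeta=\eta=1$ (Proposition~\ref{prop:twisted-fixed}).

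An equivalent fix is an oscillation argument. Take a continuity point of $f$ where $f\ne0$, place a full cylinder of even rank inside a small interval around it, and pull back, using that the digit phase is constant on the cylinder. One of these arguments must be supplied. Without it, the twisted kernel is not determined, and so the twisted cases of (i) and (ii) are not established.

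Two further corrections. First, your graph operator is misidentified. Each half of the reciprocal round trip is $S_\omega J_q=\cL_{\omega,q}$ on $L^1((0,1))$, the weighted transfer operator of the same map $T_q(x)=\{q/x\}_1$. So $Q=\cL_{\zeta,q}\cL_{\eta,q}$ in the paper's notation, with the $\eta$-factor acting first. No Gauss-map factor $\{1/x\}_1$ occurs; with one, the invariant density, cylinder structure and forcing would all belong to the wrong dynamics.

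Second, in the untwisted case the condition $\int_0^1Av=0$ is not optional and admits no ``compatibility adjustment''. Here $v$ is prescribed, the only unknown is $F_0$, and $\Ran(I-P^2)=BV_0((0,1))$ exactly, so a nonzero mean would leave no extension at all. The condition does hold, by a two-line computation. We have $A_{q,1,1}v=-S_1v+P(S_1J_qv)$ and $\int_0^1S_1v=\int_1^qv$. Also $\int_0^1P(S_1J_qv)=\int_1^qJ_qv=\int_1^qv$, because $P$ preserves integrals and $J_q$ preserves both $(1,q)$ and integrals.
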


\begin{remark}
 Part~\textup{(iii)} of Theorem~\ref{thm:intro-main} settles the
 infinite-dimensionality clause of \cite[Open Problem~1.4]{GM2026} for
 arbitrary shifts.  Corollary~\ref{cor:L1-normal} gives an exact
 operator-theoretic parametrization of the entire $L^1$ pre-annihilator
 in terms of the maximal Green domain.  An intrinsic function-space
 description of
 $\mathfrak D_{q,\zeta,\eta}$ in a standard function scale remains
 open.  The graph-core equality in Remark~\ref{rem:graph-core} is a
 separate, strictly stronger density question, and the critical case
 $q=1$ lies outside the present supercritical regime.
\end{remark}

The theorem says that the restriction map from
$\cA^{BV}_{q,\zeta,\eta}$ to $BV((1,q))$ is surjective.  Its kernel is
trivial when at least one phase is nontrivial and one-dimensional in the
untwisted case.  Since $BV((1,q))$ is infinite-dimensional, the
nonuniqueness assertion follows immediately.  In particular, the shifts
affect the ambiguity of the extension but not its existence.
The primary HUP advance is the removal of both the arithmetic restriction
on the shifts and the restricted supercritical parameter range.  The
$BV$ extension and uniqueness statements are structural strengthenings;
the twisted peripheral rigidity, $L^1$ spectral description, and Green
growth estimates are the operator results that support the normal form.

A normal form for the entire $L^1$ pre-annihilator requires a further
step.  The periodization
identities reduce the problem to the graph equation
\begin{equation*}
 (I-Q_{q,\zeta,\eta})F_0=A_{q,\zeta,\eta}v.
\end{equation*}
The range of $I-Q_{q,\zeta,\eta}$ is not closed on $L^1((0,1))$, so the
$BV$ inverse does not extend boundedly to every $L^1$ datum.  For the
transfer and forcing operators defined in Section~2, consider the Green
partial sums
\begin{equation*}
 \mathsf G_Nv=\sum_{j=0}^{N-1}
 Q_{q,\zeta,\eta}^{\,j}A_{q,\zeta,\eta}v.
\end{equation*}
These sums need not converge for every $v\in L^1((1,q))$.  We call the
largest set of middle data for which they converge in $L^1((0,1))$ the
\emph{maximal Green domain}; see
Subsection~\ref{subsec:green-domain} for the precise definition.  On this
domain the limit solves the graph equation, and
$\ker(I-Q_{q,\zeta,\eta})$ gives exactly the possible correction.  This
yields an exact operator-theoretic $L^1$ normal form and identifies the
obstruction outside the $BV$ core.  It does not, by itself, describe the
maximal Green domain intrinsically in a standard function space.

The paper is organized as follows.
\begin{enumerate}
 \item Section~2 introduces the pre-annihilator, twisted periodizations,
 and the transfer-operator graph equation.
 \item Section~3 proves the twisted Lasota--Yorke inequality and the
 required quasi-compactness on bounded variation.
 \item Section~4 classifies the unit-circle point spectrum and proves
 supercritical infinite-dimensionality.
 \item Section~5 defines the maximal Green domain and constructs the
 normal form for the entire $L^1$ pre-annihilator, including the
 nonclosed-range and Green-growth
 statements.
\end{enumerate}

\section{Pre-annihilators and transfer operators}

Here and below, $\R_+=(0,\infty)$ and
$\N=\{1,2,\ldots\}$; identities between $L^1$ functions hold almost
everywhere, and a function assigned to one of the intervals in our
decomposition is extended by zero outside that interval.

\subsection{The branch model and duality}

We use the Fourier convention
\begin{equation}\label{eq:Fourier-convention}
 \widehat\mu(\xi,\eta)
 =\int_{\R^2}e^{\pi i(x\xi+y\eta)}\,\dd\mu(x,y).
\end{equation}
Let
\begin{equation*}
 \Gamma_+=\{(t,t^{-1}):t>0\}.
\end{equation*}
Every finite complex measure on $\Gamma_+$, absolutely continuous with
respect to arc length, may be written uniquely, up to null sets, in the
form
\begin{equation*}
 \int_{\Gamma_+}\Phi\,\dd\mu
 =\int_{\R_+} \Phi(t,t^{-1})f(t)\,\dd t
\end{equation*}
with $f\in L^1(\R_+)$.  The arc-length factor has simply been absorbed
into $f$.  More explicitly,
\begin{equation*}
 \dd s=\sqrt{1+t^{-4}}\,\dd t,
 \qquad
 \frac{\dd\mu}{\dd s}(t,t^{-1})
 =\frac{f(t)}{\sqrt{1+t^{-4}}}.
\end{equation*}
Thus this parametrization is an isometric linear correspondence between
$L^1(\R_+,\dd t)$ and the arc-length absolutely continuous finite
complex measures on $\Gamma_+$, equipped with the total-variation norm.

The vanishing of $\widehat\mu$ on the shifted cross introduced above is
equivalent to
\begin{align}
 \int_{\R_+}
 f(t)e^{\pi i(2m\alpha+\theta_1)t}\,\dd t&=0,
 &&m\in\Z,\label{eq:first-family}\\
 \int_{\R_+}
 f(t)e^{\pi i(2n\gamma+\theta_2)/t}\,\dd t&=0,
 &&n\in\Z.\label{eq:second-family}
\end{align}

\begin{definition}[Pre-annihilator]\label{def:preann}
The pre-annihilator associated with the shifted cross is
\begin{align*}
 \cA_{\alpha,\gamma,\theta_1,\theta_2}
 :=\big\{f\in L^1(\R_+):\;&\eqref{eq:first-family}
 \ \mathrm{and}\ \eqref{eq:second-family}\ \mathrm{hold}\big\}.
\end{align*}
\end{definition}

If $\cE_{\alpha,\gamma,\theta_1,\theta_2}$ denotes the linear span in
$L^\infty(\R_+)$ of the two exponential families in
\eqref{eq:first-family}--\eqref{eq:second-family}, then
\begin{equation*}
 \cA_{\alpha,\gamma,\theta_1,\theta_2}
 =\cE_{\alpha,\gamma,\theta_1,\theta_2}^{\perp}.
\end{equation*}
Consequently, the pair
$(\Gamma_+,\Lambda_{\alpha,\gamma,\theta_1,\theta_2})$ is a HUP if and
only if the pre-annihilator is $\{0\}$, or equivalently if
$\cE_{\alpha,\gamma,\theta_1,\theta_2}$ is weak-star dense in
$L^\infty(\R_+)$.  If the pre-annihilator has finite dimension $d$,
then that weak-star closure has codimension $d$.

\subsection{Koopman and Perron--Frobenius operators}

Let $T:I\to I$, $I=(0,1)$, be a nonsingular piecewise monotone map.
We use the preadjoint convention of
\cite[Subsection~3.2]{CHM2014}; the positive Gauss-type operator below
is the one used in \cite[Subsection~4.1]{CHM2014}.
The Koopman operator acts on observables by
\begin{equation*}
 U_T\varphi=\varphi\circ T.
\end{equation*}
Its preadjoint on $L^1(I)$ is the Perron--Frobenius operator
$\mathcal P_T$, defined by
\begin{equation*}
 \int_I(\mathcal P_T h)\varphi\,\dd x
 =\int_Ih(\varphi\circ T)\,\dd x.
\end{equation*}
If $\phi_a:J_a\to I_a$ are the inverse branches, then
\begin{equation*}
 (\mathcal P_Th)(x)
 =\sum_a\one_{J_a}(x)|\phi_a'(x)|h(\phi_a(x)).
\end{equation*}

The map relevant here is defined almost everywhere on $I=(0,1)$ by
\begin{equation*}
 T_q(y)=\{q/y\}_1,\qquad q>1,
\end{equation*}
where $\{\cdot\}_1$ is the fractional part in $[0,1)$.  At the
countable points $q/n\in I$, where this formula takes the value
$0\notin I$, we assign any value in $I$.  This does not change the
associated operators.  Modulo endpoints, the inverse branches are
\begin{equation*}
 \phi_n(x)=\frac q{n+x}.
\end{equation*}
The positive Perron--Frobenius operator is
\begin{equation*}
 (\cL_{1,q}h)(x)
 =\sum_{n\ge1}\frac q{(n+x)^2}
 h\!\left(\frac q{n+x}\right),
\end{equation*}
where $h$ is extended by zero outside $I$.  A shift in the lattice
inserts a branch phase:
\begin{equation*}
 (\cL_{\omega,q}h)(x)
 =\sum_{n\ge1}\omega^n\frac q{(n+x)^2}
 h\!\left(\frac q{n+x}\right),
 \qquad \omega\in\T.
\end{equation*}
This operator is generally not positivity-preserving, but
\begin{equation*}
 |\cL_{\omega,q}h|\le \cL_{1,q}|h|,
 \qquad
 \norm{\cL_{\omega,q}h}_1\le\norm h_1.
\end{equation*}

\subsection{The \texorpdfstring{$BV$}{BV} space}

For a complex-valued function on an interval $J$, let
\begin{equation*}
 \norm h_{BV(J)}=\norm h_{L^1(J)}+\Var_J(h).
\end{equation*}
We use representatives with one-sided traces.  Two elementary estimates
will be used repeatedly:
\begin{align}
 \norm h_{\infty,J}
 &\le |J|^{-1}\norm h_{L^1(J)}+\Var_J(h),\label{eq:BV-sup}\\
 \Var_I(\one_Jh)
 &\le \Var_J(h)+2\norm h_{\infty,J},
 \qquad J\subset I.\notag
\end{align}
The embedding $BV(I)\hookrightarrow L^1(I)$ is compact.
This is the standard setting for piecewise expanding transfer operators;
see, for example, \cite{Rychlik1983}.

We use the Calkin essential spectral radius
\begin{equation*}
 r_{\ess}(L;\mathsf B)
 =
 r_{\mathcal L(\mathsf B)/\mathcal K(\mathsf B)}
 \big(L+\mathcal K(\mathsf B)\big),
\end{equation*}
where $\mathcal K(\mathsf B)$ is the ideal of compact operators on
$\mathsf B$.  The following is the specialization of Hennion's
criterion used below.

\begin{theorem}[Hennion]\label{thm:Hennion}
Let $\mathsf B$ and $\mathsf B_w$ be complex Banach spaces such that
the inclusion
\begin{equation*}
 \mathsf B\hookrightarrow\mathsf B_w
\end{equation*}
is continuous and compact.  Suppose that $L$ is bounded on both spaces,
the two actions are compatible with the inclusion, and, for some
$0\le a<1$ and $b<\infty$,
\begin{align}
 \norm{Lu}_{\mathsf B}
 &\le a\norm u_{\mathsf B}+b\norm u_{\mathsf B_w},
 \label{eq:abstract-DF}\\
 \norm{Lu}_{\mathsf B_w}
 &\le\norm u_{\mathsf B_w}.
 \notag
\end{align}
Then
\begin{equation*}
 r_{\ess}(L;\mathsf B)\le a.
\end{equation*}
\end{theorem}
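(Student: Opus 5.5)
The plan is to run Hennion's original argument: iterate the strong--weak inequality, convert it into a bound on the Kuratowski measure of noncompactness of the iterates $L^n$, and then pass to the Calkin essential spectral radius by Nussbaum's formula. For a bounded operator $S$ on $\mathsf B$, let $\chi(S)$ denote the Kuratowski measure of noncompactness of the image under $S$ of the closed unit ball $\mathsf B_1$ of $\mathsf B$. Nussbaum's theorem states that $\lim_{n}\chi(L^n)^{1/n}$ exists and equals the essential spectral radius. Moreover, for the Calkin-algebra definition used in the paper, this quantity coincides with $r_{\ess}(L;\mathsf B)$, because all the standard essential spectra share the same spectral radius. It therefore suffices to prove $\chi(L^n)\le C a^n$ with $C$ independent of $n$.

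\emph{Step 1: iteration.} Applying \eqref{eq:abstract-DF} repeatedly, and using $\norm{L^ju}_{\mathsf B_w}\le\norm u_{\mathsf B_w}$ together with compatibility of the two actions, gives
\begin{equation*}
 \norm{L^nu}_{\mathsf B}\le a^n\norm u_{\mathsf B}+b'\norm u_{\mathsf B_w},
 \qquad b'=\frac{b}{1-a},
\end{equation*}
for all $n\ge1$ and all $u\in\mathsf B$.

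\emph{Step 2: covering.} Because the inclusion is compact, $\mathsf B_1$ is totally bounded in $\mathsf B_w$. Fix $\varepsilon>0$ and cover $\mathsf B_1$ by finitely many sets $E_1,\dots,E_k$ of $\mathsf B_w$-diameter at most $\varepsilon$. Take $u,u'$ in the same $E_i$. Then $\norm{u-u'}_{\mathsf B}\le2$, and Step 1 applied to $u-u'$ gives
\begin{equation*}
 \norm{L^nu-L^nu'}_{\mathsf B}\le2a^n+b'\varepsilon.
\end{equation*}
Thus $L^n(\mathsf B_1)$ is covered by $k$ sets of $\mathsf B$-diameter at most $2a^n+b'\varepsilon$. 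Since $\varepsilon$ is arbitrary, $\chi(L^n)\le2a^n$, and hence $\lim_n\chi(L^n)^{1/n}\le a$.

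\emph{Step 3: identification.} The remaining step, and the only one that is not routine, is to identify $\lim_n\chi(L^n)^{1/n}$ with the Calkin quantity. One inequality is elementary. If $K$ is compact, then $\chi(L^n)=\chi(L^n+K)\le\norm{L^n+K}$; taking the infimum over compact $K$ shows that $\chi(L^n)$ is at most the Calkin norm of $L^n$, so $\lim_n\chi(L^n)^{1/n}\le r_{\ess}(L;\mathsf B)$. The inequality we actually need runs in the opposite direction, namely $r_{\ess}(L;\mathsf B)\le\lim_n\chi(L^n)^{1/n}$.

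For this direction, take $|\lambda|>a$ and argue through Fredholm theory. Operators with $\chi(S)<1$ satisfy $\chi(S^n)\to0$, and $I-S$ is then Fredholm of index zero. Applying this with $S=(L/\lambda)^n$, where $n$ is large enough that $\chi((L/\lambda)^n)<1$, shows that $\lambda^n-L^n$ is Fredholm of index zero. Factoring $\lambda^n-L^n=(\lambda-L)\,p(L)$, with $p(L)$ commuting with $\lambda-L$, then shows that $\lambda-L$ is Fredholm of index zero.

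Fredholm of index zero alone does not give invertibility in the Calkin algebra for every such $\lambda$. To conclude, use that the set $\{|\lambda|>a\}$ is connected and contains the resolvent set near infinity. Consequently the spectrum of $L$ in $\{|\lambda|>a\}$ consists of isolated eigenvalues of finite algebraic multiplicity. The corresponding spectral projections have finite rank, and removing them leaves an operator that is invertible modulo compacts. Hence $r_{\ess}(L;\mathsf B)\le a$.

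This perturbation/Fredholm step is where I would simply cite Nussbaum or Hennion \cite{Hennion1993} rather than reprove it.
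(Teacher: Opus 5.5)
Your proof is correct, but it follows a different route from the paper's. The paper uses Hennion's theorem as a black box. It verifies the $\mathrm{DF}(r)$ hypothesis with $r_n=a^n$ and $R_n=b(1-a^n)/(1-a)$ and cites Hennion's Corollaire~1. That corollary carries the side condition $r<r(L;\mathsf B)$, which forces a case split: $a<r(L;\mathsf B)$, $a\ge r(L;\mathsf B)$, and a separate limiting argument for $a=0$.

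You instead open the box. Steps~1--2 combine the strong--weak inequality with total boundedness of the unit ball of $\mathsf B$ in $\mathsf B_w$ to get $\chi(L^n)\le 2a^n$. Nussbaum's formula $r_{\ess}(L;\mathsf B)=\lim_n\chi(L^n)^{1/n}$ then gives the bound with no comparison to $r(L;\mathsf B)$ and no special case at $a=0$. This is essentially Hennion's own argument. It shows exactly where compactness of the embedding is used, but it relies on Nussbaum's theorem and on the coincidence of the various essential spectral radii rather than on one precise citation.

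Two points in Step~3 need correcting, and there is one minor slip.

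\begin{itemize}
\item The sentence ``Fredholm of index zero alone does not give invertibility in the Calkin algebra'' is false. By Atkinson's theorem, on any Banach space an operator is Fredholm if and only if its class in $\mathcal L(\mathsf B)/\mathcal K(\mathsf B)$ is invertible. Your factorization correctly shows that $\lambda-L$ is Fredholm for every $|\lambda|>a$, since a commuting factor of a Fredholm operator is Fredholm. At that point you already have $r_{\ess}(L;\mathsf B)\le a$. The detour through isolated eigenvalues and finite-rank Riesz projections proves a stronger, Browder-type statement that is not needed.
\item The factorization alone shows only that the indices of the $n$ factors $\lambda e^{2\pi ik/n}-L$ sum to zero. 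Index zero for $\lambda-L$ itself needs the connectedness of $\{|\mu|>a\}$ together with invertibility near infinity. This is again irrelevant for the Calkin bound.
\item Minor slip: with $\chi(S)$ defined as the Kuratowski measure of $S(\mathsf B_1)$, one has $\chi(S)\le 2\norm S$ in infinite dimensions, not $\chi(S)\le\norm S$. The factor $2$ disappears after taking $n$-th roots.
\end{itemize}
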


\begin{proof}
Restrict the weak norm to $\mathsf B$ by putting
\begin{equation*}
 |u|=\norm u_{\mathsf B_w}.
\end{equation*}
In Hennion's terminology, $L$ has property $\mathrm{DF}(r)$ if it is
compact from the strong norm to the weak norm and, for every $n\ge1$,
there are $R_n,r_n\ge0$ such that
\begin{equation*}
 \norm{L^nu}_{\mathsf B}
 \le R_n\norm u_{\mathsf B_w}+r_n\norm u_{\mathsf B},
 \qquad
 \liminf_{n\to\infty}r_n^{1/n}=r<r(L;\mathsf B);
\end{equation*}
see \cite[p.~627, definition of $\mathrm{DF}(r)$]{Hennion1993}.
The compactness condition holds because $L:\mathsf B\to\mathsf B$ is
bounded and $\mathsf B\hookrightarrow\mathsf B_w$ is compact.

Iteration of \eqref{eq:abstract-DF}, together with the weak contraction,
gives
\begin{equation*}
 \norm{L^nu}_{\mathsf B}
 \le a^n\norm u_{\mathsf B}
 +b\frac{1-a^n}{1-a}\norm u_{\mathsf B_w}.
\end{equation*}
Thus, when $a>0$, one may take
\begin{equation*}
 r_n=a^n,\qquad R_n=b\frac{1-a^n}{1-a}.
\end{equation*}
If $a<r(L;\mathsf B)$, then
\cite[p.~628, Corollaire~1]{Hennion1993} yields
\begin{equation*}
 r_{\ess}(L;\mathsf B)\le a.
\end{equation*}
If $a\ge r(L;\mathsf B)$, the same conclusion follows from
$r_{\ess}(L;\mathsf B)\le r(L;\mathsf B)$.  Finally, if $a=0$ and
$r(L;\mathsf B)>0$, apply the preceding argument with any
$0<\varepsilon<r(L;\mathsf B)$ in place of $a$ and let
$\varepsilon\downarrow0$; if $r(L;\mathsf B)=0$, the conclusion is
immediate.
\end{proof}

\subsection{Twisted periodization}

We retain the parameters $q,\zeta,\eta$ defined above.  With
$x=\alpha t$, define
\begin{equation*}
 F(x)=\alpha^{-1}f(x/\alpha).
\end{equation*}
This scaling preserves the $L^1$ norm.

\begin{lemma}\label{lem:periodization}
A function $f\in L^1(\R_+)$ belongs to
$\cA_{\alpha,\gamma,\theta_1,\theta_2}$ if and only if $F$ satisfies,
for almost every $x\in(0,1)$,
\begin{align*}
 \sum_{k=0}^\infty\zeta^kF(x+k)&=0,\\
 \sum_{j=0}^\infty\eta^j\frac q{(x+j)^2}
 F\!\left(\frac q{x+j}\right)&=0.
\end{align*}
Both series converge absolutely almost everywhere and define
$L^1((0,1))$ functions.
\end{lemma}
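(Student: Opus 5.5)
We first rewrite both families of conditions in terms of $F$ and then periodize each one. The two ingredients are uniqueness of Fourier coefficients on the circle $\R/2\Z$ and absolute convergence of the periodized sums, the latter coming from Fubini and $F\in L^1(\R_+)$.

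\emph{First family.} Put $x=\alpha t$. Then $f(t)\,\dd t=F(x)\,\dd x$, and
\begin{equation*}
 e^{\pi i(2m\alpha+\theta_1)t}=e^{2\pi i m x}\,e^{\pi i\theta_1x/\alpha}.
\end{equation*}
So \eqref{eq:first-family} says that $\int_0^\infty F(x)e^{\pi i\theta_1x/\alpha}e^{2\pi i m x}\,\dd x=0$ for every $m\in\Z$. Split $\R_+$ into the unit intervals $[k,k+1)$ and note that $e^{\pi i\theta_1 k/\alpha}=\zeta^k$. By Fubini, using
\begin{equation*}
 \sum_k\int_0^1|F(x+k)|\,\dd x=\norm F_1<\infty,
\end{equation*}
the condition becomes
\begin{equation*}
 \int_0^1 e^{\pi i\theta_1x/\alpha}\Big(\sum_{k\ge0}\zeta^kF(x+k)\Big)e^{2\pi i m x}\,\dd x=0,\qquad m\in\Z.
\end{equation*}
The series converges absolutely almost everywhere and defines an $L^1((0,1))$ function. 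Since the factor $e^{\pi i\theta_1x/\alpha}$ has modulus one, all Fourier coefficients of an $L^1$ function on $(0,1)$ vanish. Uniqueness of Fourier coefficients on $\T$ then gives the first identity. The converse follows by running the same chain backwards.

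\emph{Second family.} In the second condition, write $1/t=\alpha/x$, so that
\begin{equation*}
 (2n\gamma+\theta_2)/t=(2n\gamma+\theta_2)\alpha/x .
\end{equation*}
Substitute $s=q/x$, where $q=\alpha\gamma$. Then $\alpha/x=s/\gamma$ and the phase becomes
\begin{equation*}
 \pi i(2n s+\theta_2 s/\gamma).
\end{equation*}
Since $\dd x=q s^{-2}\,\dd s$, the integral becomes
\begin{equation*}
 \int_0^\infty \frac{q}{s^2}F\!\left(\frac qs\right)e^{\pi i\theta_2 s/\gamma}e^{2\pi i n s}\,\dd s .
\end{equation*}
Here $G(s)=q s^{-2}F(q/s)$ lies in $L^1(\R_+)$ with $\norm G_1=\norm F_1$. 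We then periodize over $s=y+j$ exactly as before. The phase factor $e^{\pi i\theta_2 j/\gamma}=\eta^j$ produces the second identity, again with absolute a.e.\ convergence and an $L^1((0,1))$ sum.

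The argument is routine. The only care points are the following:
\begin{itemize}
 \item justifying the Fubini interchange, which needs only the summability $\sum_j\int_0^1|G(y+j)|\,\dd y=\norm G_1$;
 \item keeping track of the phase normalization, so that the $x$-dependent unimodular factor is correctly separated from the integer-shift phases $\zeta^k$ and $\eta^j$;
 \item checking the exponent of the Jacobian and the orientation reversal under $s=q/x$.
\end{itemize}
No earlier result beyond the definition and the scaling $F(x)=\alpha^{-1}f(x/\alpha)$ is needed.
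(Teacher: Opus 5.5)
Your proof is correct and follows the paper's. It uses the same substitutions $x=\alpha t$ and then $s=q/x$ (the paper's $y$), and it splits the common unimodular factor off from the integer-shift phases $\zeta^k,\eta^j$. The paper cites the one-sided periodization lemma \cite[Lemma~2.1]{Corr2026} for the equivalence, whereas you prove it inline via Tonelli and uniqueness of Fourier coefficients on the fundamental interval $(0,1)$, which is exactly that lemma's content. One cosmetic slip: the relevant circle is $\R/\Z$ in the $x$- and $s$-variables, since the characters are $e^{2\pi imx}$, not $\R/2\Z$ as your opening sentence says.
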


\begin{proof}
Make first the change of variables $x=\alpha t$.  For $m\in\Z$,
\begin{align*}
 \int_{\R_+}f(t)e^{\pi i(2m\alpha+\theta_1)t}\,\dd t
 &=\int_{\R_+}e^{2\pi imx}G_1(x)\,\dd x,\\
 G_1(x)&:=e^{\pi i\theta_1x/\alpha}F(x).
\end{align*}
The function $G_1$ belongs to $L^1(\R_+)$.  By the corrected
one-sided periodization lemma \cite[Lemma~2.1]{Corr2026}, the first
family of Fourier integrals vanishes for every $m\in\Z$ if and only if
\begin{equation*}
 \sum_{k=0}^{\infty}G_1(x+k)=0
 \qquad\mathrm{for\ a.e.}\ x\in(0,1).
\end{equation*}
Since $\zeta=e^{\pi i\theta_1/\alpha}$,
\begin{equation*}
 \sum_{k=0}^{\infty}G_1(x+k)
 =e^{\pi i\theta_1x/\alpha}
  \sum_{k=0}^{\infty}\zeta^kF(x+k).
\end{equation*}
The common factor is nonzero, so this is equivalent to the first
periodization in the statement.

For the second family, the same scaling followed by $y=q/x$ gives,
using $q=\alpha\gamma$,
\begin{align*}
 \int_{\R_+}f(t)e^{\pi i(2n\gamma+\theta_2)/t}\,\dd t
 &=\int_{\R_+}F(x)
   e^{\pi i(2nq+\alpha\theta_2)/x}\,\dd x\\
 &=\int_{\R_+}e^{2\pi iny}G_2(y)\,\dd y,\\
 G_2(y)&:=e^{\pi i\theta_2y/\gamma}
          \frac q{y^2}F\!\left(\frac qy\right).
\end{align*}
The inversion $y\mapsto q/y$ shows that $G_2\in L^1(\R_+)$.  Applying
\cite[Lemma~2.1]{Corr2026} to $G_2$, the second family vanishes for
every $n\in\Z$ if and only if
\begin{equation*}
 \sum_{j=0}^{\infty}G_2(x+j)=0
 \qquad\mathrm{for\ a.e.}\ x\in(0,1).
\end{equation*}
Since $\eta=e^{\pi i\theta_2/\gamma}$,
\begin{equation*}
 \sum_{j=0}^{\infty}G_2(x+j)
 =e^{\pi i\theta_2x/\gamma}
  \sum_{j=0}^{\infty}\eta^j\frac q{(x+j)^2}
  F\!\left(\frac q{x+j}\right),
\end{equation*}
which proves the second equivalence.

It remains to justify the asserted convergence.  Tonelli's theorem and
the same changes of variables give
\begin{align*}
 \int_0^1\sum_{k=0}^{\infty}|G_1(x+k)|\,\dd x
 &=\int_{\R_+}|F(u)|\,\dd u,\\
 \int_0^1\sum_{j=0}^{\infty}|G_2(x+j)|\,\dd x
 &=\int_{\R_+}\frac q{y^2}
   \left|F\!\left(\frac qy\right)\right|\,\dd y
  =\int_{\R_+}|F(u)|\,\dd u.
\end{align*}
Both right-hand sides are finite.  Hence both series converge absolutely
for almost every $x\in(0,1)$, and the sums of their absolute values
belong to $L^1((0,1))$.  The two applications of the corrected lemma
are equivalences, so the converse implication follows as well.
\end{proof}

This is the character-weighted form of the one-sided periodization
mechanism in \cite[Section~5]{HM2011}; the restriction to the fundamental
interval is the correction made in \cite{Corr2026}.

Define the weighted inversion
\begin{equation*}
 (J_qh)(x)=\frac q{x^2}h\!\left(\frac qx\right).
\end{equation*}
It is an isometric involution on $L^1(\R_+)$:
\begin{equation*}
 J_q^2=I,\qquad \norm{J_qh}_1=\norm h_1.
\end{equation*}
For $\omega\in\T$, define the one-sided periodization
\begin{equation*}
 (S_\omega h)(x)=\sum_{n\ge1}\omega^nh(x+n),
 \qquad 0<x<1.
\end{equation*}
If $h\in L^1((1,\infty))$, then
\begin{equation}\label{eq:S-contraction}
 \norm{S_\omega h}_{L^1((0,1))}
 \le\sum_{n\ge1}\int_0^1|h(x+n)|\,\dd x
 \le\norm h_{L^1((1,\infty))}.
\end{equation}
The two periodizations may now be written as
\begin{equation}\label{eq:short-periods}
 F|_{(0,1)}=-S_\zeta(F|_{(1,\infty)}),\qquad
 (J_qF)|_{(0,1)}
 =-S_\eta((J_qF)|_{(1,\infty)}).
\end{equation}
The restriction to the fundamental interval is essential; it is consistent
with the correction in \cite{Corr2026}.

\subsection{The \texorpdfstring{$L^1$}{L1} graph equation}

Assume $q>1$.  Following the three-piece decomposition used in
\cite[Proposition~8.5]{CHM2014}, write
\begin{equation*}
 F=F_0+v+F_\infty,
\end{equation*}
where $F_0$, $v$ and $F_\infty$ vanish almost everywhere outside
$(0,1)$, $(1,q)$ and $(q,\infty)$, respectively.
The second identity in \eqref{eq:short-periods} determines the tail:
\begin{equation}\label{eq:B-def}
 F_\infty=B_{\eta,q}(F_0+v),\qquad
 B_{\eta,q}h=-J_q\big[S_\eta(J_qh)\big].
\end{equation}
The expression in brackets is initially defined on $(0,1)$ and then
extended by zero.  Since $h$ vanishes almost everywhere outside
$(0,q)$, the function $J_qh$ vanishes almost everywhere outside
$(1,\infty)$, and $B_{\eta,q}h$ vanishes almost everywhere outside
$(q,\infty)$.
Moreover, \eqref{eq:S-contraction} and the isometry of $J_q$ give
\begin{equation}\label{eq:B-bound}
 \norm{B_{\eta,q}h}_1\le\norm h_1.
\end{equation}
For $t>q$, the formula is
\begin{equation}\label{eq:B-explicit}
 (B_{\eta,q}h)(t)
 =-\sum_{j\ge1}\eta^j\frac{q^2}{(q+jt)^2}
 h\!\left(\frac{qt}{q+jt}\right).
\end{equation}
This is the character-weighted version of the positive-branch tail
operator implicit in
\cite[Proposition~8.5]{CHM2014}.  Since the
global $BV$ conclusion is not supplied there, we record the short estimate
needed here.

\begin{lemma}\label{lem:tail-BV}
Suppose that $h\in L^1((0,q))$, with
$h|_{(0,1)}\in BV((0,1))$ and $h|_{(1,q)}\in BV((1,q))$.  Then
\begin{equation*}
 B_{\eta,q}h\in BV((q,\infty)),
\end{equation*}
and the resulting map from
$BV((0,1))\oplus BV((1,q))$ to $BV((q,\infty))$ is bounded, uniformly in
$\eta\in\T$.  Consequently, after extending $h$ by zero outside
$(0,q)$, one has $h+B_{\eta,q}h\in BV(\R_+)$.
\end{lemma}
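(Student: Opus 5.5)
The plan is to work directly from the explicit formula \eqref{eq:B-explicit} and treat $B_{\eta,q}h$ as a phase-weighted series of branch terms
\begin{equation*}
 g_j(t)=\frac{q^2}{(q+jt)^2}\,h\!\left(\frac{qt}{q+jt}\right),\qquad t>q,\ j\ge1.
\end{equation*}
Since $|\eta^j|=1$, it suffices to bound $\sum_j\norm{g_j}_{BV((q,\infty))}$ by a constant times $\norm{h|_{(0,1)}}_{BV}+\norm{h|_{(1,q)}}_{BV}$. Termwise summation is legitimate because the variation of an absolutely convergent sum is at most the sum of the variations. This also gives uniformity in $\eta\in\T$ for free. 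The $L^1$ part is already supplied by \eqref{eq:B-bound}, so only the variation needs work.

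For fixed $j$, set $\psi_j(t)=qt/(q+jt)$. It is increasing on $(q,\infty)$ and maps this interval onto $(q/(1+j),q/j)$, with weight $w_j(t)=q^2/(q+jt)^2=\psi_j'(t)$. The image interval lies in $(0,1)$ for $j\ge q$. For the finitely many $j<q$ it may straddle the point $1$, where $h$ can jump. Write $\tilde h=h\circ\psi_j$; by monotonicity,
\begin{equation*}
 \Var(\tilde h)\le\Var_{(0,1)}(h)+\Var_{(1,q)}(h)+|h(1^-)|+|h(1^+)|.
\end{equation*}
Moreover $\Var(w_j\tilde h)\le\norm{w_j}_\infty\Var(\tilde h)+\norm{\tilde h}_\infty\Var(w_j)$. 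Here $w_j$ is monotone decreasing with $\sup w_j=w_j(q)=(1+j)^{-2}$ and $\Var(w_j)\le(1+j)^{-2}$. Hence
\begin{equation*}
 \Var(g_j)\le 2(1+j)^{-2}\bigl(\Var_{(0,1)}h+\Var_{(1,q)}h+2\norm h_\infty\bigr).
\end{equation*}

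The sup norm is controlled on each piece by \eqref{eq:BV-sup}, with interval lengths $1$ and $q-1$. This gives
\begin{equation*}
 \norm h_\infty\le C_q\bigl(\norm{h|_{(0,1)}}_{BV}+\norm{h|_{(1,q)}}_{BV}\bigr).
\end{equation*}
The one-sided traces at $1$ are bounded in the same way. Summing $(1+j)^{-2}$ then yields the bounded map into $BV((q,\infty))$, with a constant depending only on $q$.

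The main point to be careful about is the unbounded interval: variation on $(q,\infty)$ must include behaviour as $t\to\infty$. This is harmless here, since $w_j\to0$ and the bound above is global. The $L^1$ norm is finite by \eqref{eq:B-bound}. The final assertion follows from the jump estimate. Indeed, $h+B_{\eta,q}h$ is $BV$ on each of $(0,1)$, $(1,q)$ and $(q,\infty)$. Gluing adds only the jumps at $1$ and $q$, which are bounded by sup norms. The limit at $0^+$ is irrelevant to $\Var_{\R_+}$ once $h$ has bounded variation on $(0,1)$. Hence $h+B_{\eta,q}h\in BV(\R_+)$.
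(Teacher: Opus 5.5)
Your proposal is correct and follows essentially the paper's argument: the branch terms $w_j\,(h\circ s_j)$ of \eqref{eq:B-explicit} are controlled termwise, uniformly in the phase, by the product and monotone-composition rules with $\norm{w_j}_\infty=\Var(w_j)=(j+1)^{-2}$; \eqref{eq:BV-sup} supplies the sup bounds and the jumps at $1$ and $q$. The only difference is cosmetic: you bound each $\Var(h\circ\psi_j)$ by the full piecewise variation on $(0,q)$ and rely on the summable factor $(j+1)^{-2}$, whereas the paper uses that the images $K_j=(q/(j+1),q/j)$ partition $(0,q)$, so the branch variations add up to $\Var_{(0,q)}(\widetilde h)$.
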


\begin{proof}
Write
\begin{equation*}
 h_-=h|_{(0,1)},\qquad h_+=h|_{(1,q)},
\end{equation*}
and choose their one-sided-trace representatives.  Their piecewise union
$\widetilde h$ satisfies
\begin{align*}
 \Var_{(0,q)}(\widetilde h)
 &\le
 \Var_{(0,1)}(h_-)+\Var_{(1,q)}(h_+)
 +|h_-(1-)-h_+(1+)|\\
 &\le
 \Var_{(0,1)}(h_-)+\Var_{(1,q)}(h_+)
 +\norm{h_-}_{\infty,(0,1)}
 +\norm{h_+}_{\infty,(1,q)}.
\end{align*}
By \eqref{eq:BV-sup}, both this quantity and
$\norm{\widetilde h}_{\infty,(0,q)}$ are bounded by a constant depending
only on $q$ times
\begin{equation*}
 \norm{h_-}_{BV((0,1))}+\norm{h_+}_{BV((1,q))}.
\end{equation*}
For $j\ge1$, put
\begin{equation*}
 s_j(t)=\frac{qt}{q+jt},\qquad
 w_j(t)=\frac{q^2}{(q+jt)^2},\qquad t>q.
\end{equation*}
Then $s_j$ maps $(q,\infty)$ increasingly onto
\begin{equation*}
 K_j=\left(\frac q{j+1},\frac qj\right),
\end{equation*}
the intervals $K_j$ partition $(0,q)$, and $s_j'=w_j$.  Moreover,
\begin{equation*}
 \norm{w_j}_{\infty,(q,\infty)}
 =\Var_{(q,\infty)}(w_j)=\frac1{(j+1)^2}.
\end{equation*}
The change of variables $s=s_j(t)$ and the $BV$ product and
monotone-composition rules give
\begin{align*}
 \sum_{j\ge1}\norm{w_j(\widetilde h\circ s_j)}_1
 &=\norm{\widetilde h}_1,\\
 \sum_{j\ge1}
 \Var_{(q,\infty)}\big(w_j(\widetilde h\circ s_j)\big)
 &\le \sum_{j\ge1}\Var_{K_j}(\widetilde h)
 +\norm{\widetilde h}_{\infty,(0,q)}
   \sum_{j\ge1}\frac1{(j+1)^2}\\
 &\le \Var_{(0,q)}(\widetilde h)
 +\norm{\widetilde h}_{\infty,(0,q)}
   \sum_{j\ge1}\frac1{(j+1)^2}.
\end{align*}
Thus \eqref{eq:B-explicit} converges absolutely in $BV((q,\infty))$,
uniformly in the unimodular phase.  Its right trace satisfies
\begin{equation*}
 |B_{\eta,q}h(q+)|
 \le\norm{\widetilde h}_{\infty,(0,q)}
 \sum_{j\ge1}\frac1{(j+1)^2}.
\end{equation*}
Consequently,
\begin{align*}
 \Var_{\R_+}\big(\widetilde h+B_{\eta,q}h\big)
 &\le \Var_{(0,q)}(\widetilde h)
 +\Var_{(q,\infty)}(B_{\eta,q}h)\\
 &\quad+|\widetilde h(q-)-B_{\eta,q}h(q+)|.
\end{align*}
The preceding bounds control the right-hand side by the asserted
piecewise-$BV$ norm, including the jump at $q$.
\end{proof}

The tail has now been eliminated exactly.  What remains is to separate
the first periodization into the part propagated from the unknown interior
datum and the part forced by the freely chosen middle datum.  For
$u\in L^1((0,1))$, extended by zero, the identity
\begin{equation*}
 \cL_{\omega,q}u=S_\omega J_qu.
\end{equation*}
shows that one reciprocal round trip on the interior is represented by
$Q_{q,\zeta,\eta}$, whereas every term originating from the middle
interval is collected in $A_{q,\zeta,\eta}$.  This is the reason for the
definitions
\begin{align}
 Q_{q,\zeta,\eta}
 &:=\cL_{\zeta,q}\cL_{\eta,q},\label{eq:Q-def}\\
 A_{q,\zeta,\eta}v
 &:=-S_\zeta v+\cL_{\zeta,q}(S_\eta J_qv).
 \label{eq:A-def}
\end{align}
Substitution of \eqref{eq:B-def} into the first equation in
\eqref{eq:short-periods} yields
\begin{equation}\label{eq:interior-equation}
 (I-Q_{q,\zeta,\eta})F_0=A_{q,\zeta,\eta}v.
\end{equation}
When $\theta_1=\theta_2=0$, we have $\zeta=\eta=1$ and
$Q_{q,1,1}=\cL_{1,q}^2$.  Thus the shifts do not change the reciprocal
dynamics; they insert unimodular weights along its cylinders.
In particular, Proposition~\ref{prop:graph} identifies the
pre-annihilator with the graph of $I-Q_{q,\zeta,\eta}$ driven by the
forcing operator $A_{q,\zeta,\eta}$.

\begin{proposition}\label{prop:graph}
For $q>1$, the normalized pre-annihilator consists exactly of the
functions
\begin{equation}\label{eq:graph-formula}
 F=F_0+v+B_{\eta,q}(F_0+v),
\end{equation}
where $F_0\in L^1((0,1))$, $v\in L^1((1,q))$, and
$(F_0,v)$ satisfies \eqref{eq:interior-equation} in $L^1((0,1))$.
\end{proposition}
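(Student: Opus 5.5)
The plan is to prove both inclusions directly from Lemma~\ref{lem:periodization}, written in the compact form \eqref{eq:short-periods}. Let $F\in L^1(\R_+)$ and decompose it as $F=F_0+v+F_\infty$ along $(0,1)$, $(1,q)$, $(q,\infty)$. By Lemma~\ref{lem:periodization}, $F$ lies in the normalized pre-annihilator if and only if both identities in \eqref{eq:short-periods} hold. I will show that the second identity is equivalent to $F_\infty=B_{\eta,q}(F_0+v)$. I will then show that, once this tail formula holds, the first identity is equivalent to \eqref{eq:interior-equation}.

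\emph{Tail step.} Since $J_q$ interchanges $(0,1)$ with $(q,\infty)$ and maps $(0,q)$ onto $(1,\infty)$, we have
\[
(J_qF)|_{(0,1)}=J_qF_\infty\quad\text{(restricted to $(0,1)$)},\qquad
(J_qF)|_{(1,\infty)}=J_q(F_0+v).
\]
The second identity in \eqref{eq:short-periods} therefore reads $J_qF_\infty=-S_\eta J_q(F_0+v)$ on $(0,1)$. Applying the involution $J_q$ gives exactly $F_\infty=B_{\eta,q}(F_0+v)$, and the argument reverses. The earlier support remark guarantees that $B_{\eta,q}(F_0+v)$ lives on $(q,\infty)$, and \eqref{eq:B-bound} guarantees that it is in $L^1$. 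Hence the function in \eqref{eq:graph-formula} belongs to $L^1(\R_+)$ whenever $F_0\in L^1((0,1))$ and $v\in L^1((1,q))$.

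\emph{Interior step.} The first identity reads $F_0=-S_\zeta(F|_{(1,\infty)})=-S_\zeta v-S_\zeta F_\infty$, using linearity and absolute convergence from \eqref{eq:S-contraction}. I substitute $F_\infty=-J_qS_\eta J_q(F_0+v)$. This requires the identity $S_\zeta J_q(S_\eta J_q u)=\cL_{\zeta,q}(S_\eta J_q u)$ for $u$ supported in $(0,q)$. The identity holds because $S_\eta J_qu$ is supported in $(0,1)$ by construction, and the paper's identity $\cL_{\omega,q}w=S_\omega J_qw$ applies to such $w\in L^1((0,1))$. This gives
\[
F_0=-S_\zeta v+\cL_{\zeta,q}S_\eta J_qF_0+\cL_{\zeta,q}S_\eta J_qv .
\]
Since $F_0$ is supported in $(0,1)$, $S_\eta J_qF_0=\cL_{\eta,q}F_0$. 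This rearranges to $(I-\cL_{\zeta,q}\cL_{\eta,q})F_0=A_{q,\zeta,\eta}v$, which is \eqref{eq:interior-equation} with \eqref{eq:Q-def} and \eqref{eq:A-def}. Each step is an equality in $L^1((0,1))$, so the argument runs in both directions.

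The only delicate point is the identity $\cL_{\omega,q}w=S_\omega J_qw$ together with its support conventions. I will verify it by computing $S_\omega J_qw(x)=\sum_{n\ge1}\omega^n\frac{q}{(x+n)^2}w\big(\frac{q}{x+n}\big)$, which is exactly the formula defining $\cL_{\omega,q}$. I will also check that the extensions by zero of $J_qv$ (supported in $(1,q)$) and of $J_qF_0$ (supported in $(q,\infty)$) are handled consistently inside the periodizations. The $L^1$ boundedness of all the operators involved ensures that the rearrangements of the series are legitimate almost everywhere.
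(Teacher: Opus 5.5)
Your proposal is correct and takes the same approach as the paper. You first show, via $J_q^2=I$, that the second periodization in \eqref{eq:short-periods} is equivalent to the tail formula $F_\infty=B_{\eta,q}(F_0+v)$; substituting that tail into the first periodization, using $\cL_{\omega,q}w=S_\omega J_qw$ for $w$ supported in $(0,1)$, gives exactly \eqref{eq:interior-equation}, and Lemma~\ref{lem:periodization} supplies the equivalence with membership in the pre-annihilator. The only difference is that you spell out the support bookkeeping, namely that $J_q$ swaps $(0,1)$ and $(q,\infty)$ and maps $(0,q)$ onto $(1,\infty)$, which the paper's short proof leaves implicit.
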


The untwisted specialization is
\cite[Proposition~8.5]{CHM2014}.  We retain the verification
because the exact character-weighted statement is needed here.
\begin{proof}
If $F$ satisfies the two periodizations, the second gives
\eqref{eq:B-def}; inserting that tail into the first gives
\eqref{eq:interior-equation}.  Conversely, \eqref{eq:graph-formula} makes
the second periodization an identity by $J_q^2=I$, and
\eqref{eq:interior-equation} is exactly the first one.  The equivalence
now follows from Lemma~\ref{lem:periodization}.
\end{proof}

The preceding formula becomes more informative when it is regarded as a
map between Banach spaces.  Define
\begin{equation*}
 D_{q,\zeta,\eta}(F_0,v)
 :=(I-Q_{q,\zeta,\eta})F_0-A_{q,\zeta,\eta}v
\end{equation*}
on $L^1((0,1))\oplus L^1((1,q))$, equipped with the sum norm.

\begin{proposition}\label{prop:Banach-graph}
Let $q>1$.  The map
\begin{equation}\label{eq:Phi-def}
 \Phi(F_0,v):=F_0+v+B_{\eta,q}(F_0+v)
\end{equation}
is a Banach-space isomorphism from
$\ker D_{q,\zeta,\eta}$ onto the normalized pre-annihilator.  Moreover,
\begin{equation}\label{eq:Phi-bounds}
 \norm{F_0}_1+\norm v_1
 \le \norm{\Phi(F_0,v)}_1
 \le 2\big(\norm{F_0}_1+\norm v_1\big).
\end{equation}
\end{proposition}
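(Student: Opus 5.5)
The plan is to combine Proposition~\ref{prop:graph} with the norm bounds \eqref{eq:B-bound} and the disjointness of supports; everything is elementary once these are in place.

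\emph{Linearity, boundedness and well-definedness.} The map $\Phi$ in \eqref{eq:Phi-def} is linear because $B_{\eta,q}$ is linear. It is bounded on all of $L^1((0,1))\oplus L^1((1,q))$: by the triangle inequality and \eqref{eq:B-bound},
\begin{equation*}
 \norm{\Phi(F_0,v)}_1\le\norm{F_0+v}_1+\norm{B_{\eta,q}(F_0+v)}_1
 \le 2\norm{F_0+v}_1=2\big(\norm{F_0}_1+\norm v_1\big),
\end{equation*}
where the last equality holds because $F_0$ and $v$ have a.e.\ disjoint supports. This is the upper bound in \eqref{eq:Phi-bounds}.

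\emph{Range and surjectivity.} By Proposition~\ref{prop:graph}, $\Phi$ maps $\ker D_{q,\zeta,\eta}$ into the normalized pre-annihilator. The same proposition shows that every element of the pre-annihilator has the form \eqref{eq:graph-formula} with $(F_0,v)\in\ker D_{q,\zeta,\eta}$, so $\Phi$ is onto.

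\emph{Lower bound and injectivity.} The three pieces $F_0$, $v$ and $B_{\eta,q}(F_0+v)$ are supported a.e.\ in the disjoint intervals $(0,1)$, $(1,q)$ and $(q,\infty)$; the last support was noted after \eqref{eq:B-def}. The $L^1$ norm is therefore additive over the three pieces, giving
\begin{equation*}
 \norm{\Phi(F_0,v)}_1=\norm{F_0}_1+\norm v_1+\norm{B_{\eta,q}(F_0+v)}_1\ge\norm{F_0}_1+\norm v_1.
\end{equation*}
This is the lower bound in \eqref{eq:Phi-bounds}, and it also gives injectivity. Equivalently, the inverse of $\Phi$ is $F\mapsto(F|_{(0,1)},F|_{(1,q)})$, which is restriction and so bounded.

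\emph{Closedness and conclusion.} Since $D_{q,\zeta,\eta}$ is bounded, $\ker D_{q,\zeta,\eta}$ is closed and hence a Banach space. Using \eqref{eq:S-contraction} together with $\norm{\cL_{\omega,q}}\le1$ and the isometry of $J_q$, one gets $\norm{Q_{q,\zeta,\eta}}\le1$ and $\norm{A_{q,\zeta,\eta}}\le2$. The pre-annihilator is closed in $L^1(\R_+)$, being the annihilator $\cE^\perp$. The two-sided bound \eqref{eq:Phi-bounds} then shows $\Phi$ is a Banach-space isomorphism.

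No step here is a genuine obstacle. The only point needing care is checking that $B_{\eta,q}(F_0+v)$ is supported in $(q,\infty)$, since that is what makes the norm additive; this was recorded after \eqref{eq:B-def}.
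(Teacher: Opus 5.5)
Your proof is correct and follows essentially the same route as the paper: boundedness of $Q_{q,\zeta,\eta}$ and $A_{q,\zeta,\eta}$ gives closedness of $\ker D_{q,\zeta,\eta}$, Proposition~\ref{prop:graph} identifies the image with the normalized pre-annihilator, and the disjoint supports of $F_0$, $v$ and $B_{\eta,q}(F_0+v)$, combined with \eqref{eq:B-bound}, give the two-sided bound \eqref{eq:Phi-bounds}. The differences are cosmetic: you obtain injectivity from the lower bound rather than from Proposition~\ref{prop:graph}, and you state explicitly that the pre-annihilator is closed, which the paper leaves implicit.
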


\begin{proof}
The operators $Q$ and $A$ are bounded on the indicated $L^1$
spaces: indeed, the contraction estimates for $S_\omega$, $J_q$ and
$\cL_{\omega,q}$ give
\begin{equation*}
 \norm{QF_0}_1\le\norm{F_0}_1,
 \qquad
 \norm{Av}_1\le2\norm v_1.
\end{equation*}
Hence $\ker D_{q,\zeta,\eta}$ is closed.  Proposition~\ref{prop:graph}
shows that $\Phi$ maps this kernel bijectively onto the normalized
pre-annihilator.  Finally, $F_0+v$ and
$B_{\eta,q}(F_0+v)$ have disjoint supports.  By
\eqref{eq:B-bound},
\begin{align*}
 \norm{\Phi(F_0,v)}_1
 &=\norm{F_0}_1+\norm v_1
   +\norm{B_{\eta,q}(F_0+v)}_1\\
 &\le2\big(\norm{F_0}_1+\norm v_1\big),
\end{align*}
and the lower bound is immediate.  Thus $\Phi$ and its inverse are
bounded.
\end{proof}

\begin{lemma}\label{lem:A-BV}
For fixed $q>1$ and $\zeta,\eta\in\T$,
\begin{equation*}
 A_{q,\zeta,\eta}:BV((1,q))\longrightarrow BV((0,1))
\end{equation*}
is bounded.
\end{lemma}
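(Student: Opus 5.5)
The operator $A_{q,\zeta,\eta}v=-S_\zeta v+\cL_{\zeta,q}(S_\eta J_qv)$ has two pieces, and I would bound each one on $BV$ separately. The first piece, $S_\zeta v$ with $v\in BV((1,q))$ extended by zero, is a finite sum. On $(0,1)$ we have
\begin{equation*}
 (S_\zeta v)(x)=\sum_{1\le n<q}\zeta^n\,(\one_{(1,q)}v)(x+n),
\end{equation*}
and only the integers $n$ with $n<q$ contribute, so there are at most $\lceil q\rceil$ terms. Each term is a translate of $v$ restricted to a subinterval of $(0,1)$. The second estimate after \eqref{eq:BV-sup} shows that cutting off by an indicator costs at most $\Var(v)+2\norm v_\infty$. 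Applying \eqref{eq:BV-sup} on $(1,q)$ then gives $\norm{S_\zeta v}_{BV((0,1))}\le C_q\norm v_{BV((1,q))}$.

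The second piece needs three steps.

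First, I would show that $J_qv$ lies in $BV((1,q))$. It is supported on $(1,q)$, because $t\mapsto q/t$ maps $(1,q)$ onto itself. The weight $q/x^2$ is smooth and monotone there, and composing with the monotone map $q/x$ preserves variation. The product rule gives
\begin{equation*}
 \Var(J_qv)\le q\,\Var(v)+\Var\bigl(q/x^2\bigr)\norm v_\infty ,
\end{equation*}
with the analogous bound for the $L^1$ norm.

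Second, $u:=S_\eta J_qv$ lies in $BV((0,1))$. This is the same finite-sum argument as for the first piece.

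Third, I need $\cL_{\zeta,q}$ to be bounded on $BV((0,1))$. I would prove it directly. Write
\begin{equation*}
 \cL_{\zeta,q}u=\sum_{n\ge1}\zeta^n\,w_n\,\bigl(\one_{I}u\bigr)\circ\phi_n,\qquad w_n(x)=\frac{q}{(n+x)^2},\quad \phi_n(x)=\frac{q}{n+x}.
\end{equation*}
Since $\phi_n$ is monotone, $\Var_{(0,1)}\bigl((\one_I u)\circ\phi_n\bigr)=\Var_{\phi_n((0,1))}(\one_I u)$. The intervals $\phi_n((0,1))=(q/(n+1),q/n)$ are disjoint, so summing over $n$ gives at most $\Var_{\R}(\one_I u)\le\Var(u)+2\norm u_\infty$. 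For the weights, $\norm{w_n}_\infty$ and $\Var(w_n)$ are both $\le q/n^2$, which is summable. The product rule then gives
\begin{equation*}
 \sum_n\Var\bigl(w_n\,(\one_Iu)\circ\phi_n\bigr)\le q\,\Var(\one_I u)+q\Bigl(\sum_n n^{-2}\Bigr)\norm u_\infty .
\end{equation*}
The $L^1$ bound is the contraction $\norm{\cL_{\zeta,q}u}_1\le\norm u_1$ already recorded. The series converges absolutely in $BV$, uniformly in $\zeta$, so combining with \eqref{eq:BV-sup} bounds $\cL_{\zeta,q}$ on $BV((0,1))$. This is the Lemma~\ref{lem:tail-BV} computation transplanted to the Gauss-type branches.

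The main obstacle is the third step: the branches are countably many, and one has to make sure the jumps created at the branch endpoints $q/n$ remain summable. In the bound above these jumps are charged to $\sup_n\norm{w_n}_\infty\cdot\Var(\one_Iu)$ and to $\sum_n\norm{w_n}_\infty\norm u_\infty$, both of which are finite. The weight factor $q$ need not be contractive here, since only boundedness is claimed; the phase-uniform Lasota--Yorke contraction is the business of Section~3. Putting the two pieces together yields the stated bound on $A_{q,\zeta,\eta}$, with a constant depending only on $q$.
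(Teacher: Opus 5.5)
Your proof is correct. Its first two steps are exactly the paper's: the finite-translate estimate for $S_\zeta$ and $S_\eta$, and the product/composition bound for $J_q$ on $(1,q)$.

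The difference is in the third step. The paper does not prove $BV$-boundedness of $\cL_{\zeta,q}$ at this point. It cites Theorem~\ref{thm:LY} with $r=1$, which gives the genuine Lasota--Yorke bound $\Var(\cL_{\zeta,q}u)\le 4q^{-1}\Var(u)+8\delta_1^{-1}\norm u_1$. That bound rests on the cylinder decomposition, the distortion lemma and the finite-image lemma, so the paper's proof is a forward reference from Section~2 to Section~3.

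You instead give a self-contained one-step branch summation in the style of Lemma~\ref{lem:tail-BV}. It uses only three facts:
\begin{itemize}
\item variation is invariant under monotone reparametrization;
\item the intervals $(q/(n+1),q/n)$ are disjoint and cover $(0,q)$;
\item $\norm{w_n}_\infty,\Var(w_n)\le q/n^2$ is summable.
\end{itemize}
This makes the lemma independent of Section~3 and fully elementary. The price is a non-contractive strong constant of order $q$, and an $\norm u_\infty$ term that must be converted back through \eqref{eq:BV-sup}. Since the lemma claims only boundedness, that price is harmless.

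One small clarification of your closing remark. Every summand $w_n\,(\one_I u)\circ\phi_n$ is defined on all of $(0,1)$, so the branch endpoints $q/n$, which live in the source variable, create no jumps in the output. The only zero-extension jump occurs when $q\notin\N$: it sits at $x=q-\lfloor q\rfloor$ and comes from the truncated edge branch. Your term $\Var_{\R}(\one_I u)$ already accounts for it through the jump of $\one_I u$ at $1$.
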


\begin{proof}
Extend $v$ by zero.  In $S_\zeta v$, only finitely many translates
can be nonzero.  On each translated subinterval, variation is bounded by
$\Var_{(1,q)}v$, while the zero-extension jumps are controlled by
\eqref{eq:BV-sup}.  Thus
\begin{equation*}
 \norm{S_\zeta v}_{BV((0,1))}
 \le C_q\norm v_{BV((1,q))}.
\end{equation*}
The map $t\mapsto q/t$ is a smooth diffeomorphism of $(1,q)$ onto
itself, and its weight $q/t^2$ has bounded derivative there.  The
product and composition rules for $BV$ therefore give
\begin{equation*}
 \norm{J_qv}_{BV((1,q))}
 \le C_q\norm v_{BV((1,q))}.
\end{equation*}
Applying the same finite-translate estimate to $S_\eta J_qv$, and then
the $BV$-boundedness of $\cL_{\zeta,q}$ proved in
Theorem~\ref{thm:LY}, proves the assertion.
\end{proof}

\section{A twisted Lasota--Yorke inequality}

The unshifted HUP analysis in
\cite[Theorem~C]{CHM2014} places the Gauss-type Perron--Frobenius operator on
$BV$.  The strong--weak variation mechanism goes back to
\cite[Theorem~1]{LasotaYorke1973}; the
countable-branch framework is developed in
\cite[Theorem~1]{Rychlik1983}.  Here the
graph equation contains the two-step
operator $Q_{q,\zeta,\eta}$ with arbitrary unimodular cylinder weights.
Its $L^1$ contraction alone does not make $I-Q_{q,\zeta,\eta}$ Fredholm.
We need a strict contraction of the variation, uniform in both phases,
while retaining every endpoint jump created by zero extension.  This is
the purpose of the twisted inequality below.

Put $I=(0,1)$, $m=\lfloor q\rfloor$ and $a=q-m$.  Modulo endpoints,
the monotonicity intervals of $T_q$ are
\begin{equation*}
 I_m=\left(\frac q{m+1},1\right),\qquad
 I_n=\left(\frac q{n+1},\frac qn\right),\quad n\ge m+1.
\end{equation*}
If $a=0$, $T_q(I_m)=I$; if $a>0$,
$T_q(I_m)=(a,1)$.  Every branch with $n\ge m+1$ maps onto $I$.

Let $\cP_r$ be the collection of maximal open cylinders of $T_q^r$.
For $C\in\cP_r$, put
\begin{equation*}
 J_C=T_q^r(C),\qquad
 \psi_C=(T_q^r|_C)^{-1}:J_C\to C,\qquad
 h_C=|\psi_C'|.
\end{equation*}

\begin{lemma}\label{lem:finite-images}
For every fixed $r$, the family
$\{J_C:C\in\cP_r\}$ is finite.  Consequently,
\begin{equation}\label{eq:delta-r}
 \delta_r:=\min_{C\in\cP_r}|J_C|>0.
\end{equation}
No lower bound uniform in $r$ is asserted or needed.
\end{lemma}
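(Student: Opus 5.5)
We prove by induction on $r$ that every image $J_C$, $C\in\cP_r$, is one of the finitely many intervals
\begin{equation*}
 I,\qquad T_q^s\big((a,1)\big)\ \text{or pieces of it},\quad 0\le s<r,
\end{equation*}
made precise as follows. Let $\mathcal J_r=\{J_C:C\in\cP_r\}$; we show that $\mathcal J_r$ is finite for each $r$.

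For $r=1$, the cylinders are the monotonicity intervals $I_n$, $n\ge m$. Their images are $I$ for $n\ge m+1$, and $T_q(I_m)$, which equals $I$ if $a=0$ and $(a,1)$ if $a>0$. Thus $\mathcal J_1\subset\{I,(a,1)\}$.

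For the inductive step, a maximal cylinder of $T_q^{r+1}$ has the form
\begin{equation*}
 C=C'\cap (T_q^r|_{C'})^{-1}(I_n), \qquad C'\in\cP_r,
\end{equation*}
with $I_n\cap J_{C'}\neq\emptyset$. Its image is
\begin{equation*}
 J_C=T_q\big(J_{C'}\cap I_n\big).
\end{equation*}
Each $J_{C'}$ is an open interval $(c,1)$ or $I$ with right endpoint $1$; this will be part of the inductive hypothesis. Only the first step needs checking: $(a,1)$ has right endpoint $1$. Given this, $J_{C'}\cap I_n$ equals the whole of $I_n$ except for the single index $n$ with $c\in I_n$ (or $c$ at a boundary). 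For the full branches, the image is $I$ if $n\ge m+1$ and $T_q(I_m)$ if $n=m$. For the exceptional index, $J_{C'}\cap I_n=(c,q/n)$ or $(c,1)$, and its image under the decreasing map $y\mapsto q/y-n$ is $(T_q(c^+)\text{-limit},\,q/c-n)$ reversed. More carefully, it is $(\sup T_q\text{ on left},\ q/c-n)$. Since $T_q$ is decreasing on each branch, the image is $(\ell,\, q/c-n)$, where $\ell=0$ or $a$. Its right endpoint is no longer $1$, so the inductive hypothesis must be weakened to: every $J\in\mathcal J_r$ is an open interval with finitely many possible endpoints.

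The cleaner formulation is therefore the following. Let $E_r$ be the finite set of endpoints $\{0,1,a\}\cup\{T_q^s(\text{endpoint})\}$. Every $J_{C'}$ is an interval with endpoints in $E_r$. Cutting it by the $I_n$ produces full branches, with images in $\{I,T_q(I_m)\}$, plus at most two partial branches, one at each endpoint of $J_{C'}$. The images of the partial branches have endpoints in $\{0,a,1\}\cup T_q(E_r)$, using one-sided limits of $T_q$. Here $T_q(e)$ is taken as the one-sided limit, which is finite because $E_r\subset[0,1]$. The value $e=0$ would only arise inside an infinite-branch tail, where all branches are full. Hence $E_{r+1}=E_r\cup T_q^{\pm}(E_r)\cup\{0,a,1\}$ is finite, so $\mathcal J_{r+1}$ is a finite set of nondegenerate open intervals. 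Nondegeneracy holds because cylinders are open and nonempty and $T_q^r$ is a diffeomorphism on each.

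The minimum $\delta_r$ is then a minimum over finitely many positive lengths, which gives \eqref{eq:delta-r}. The main point to verify carefully is that the one-sided limits at endpoints stay in a finite set and never escape to infinity. This holds because a partial branch occurs only at an endpoint $c>0$ of $J_{C'}$, where $q/c$ is finite. At the left endpoint $0$, every branch $I_n$ with $n$ large is full.
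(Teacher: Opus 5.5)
Your argument is correct and is essentially the paper's proof. You induct on $r$, cut each rank-$r$ image by the consecutive branch intervals $I_n$, and observe that all but at most two pieces are full branches with images in $\{I,(a,1)\}$. Only the partial branches at the two endpoints produce new images, and $\delta_r>0$ then follows because there are finitely many nondegenerate image intervals.

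Your finite endpoint set $E_r$, built from one-sided limits of $T_q$ together with the remark that the left endpoint $0$ never produces a partial branch, makes explicit the bookkeeping the paper leaves implicit. The abandoned ``right endpoint $1$'' hypothesis in the middle of your write-up can simply be deleted.
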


\begin{proof}
At rank one the only images are $I$ and, when $a>0$, $(a,1)$.
Suppose the rank-$r$ image family is finite and let $K$ be one of its
intervals.  The next refinement uses the intersections $K\cap I_n$.
Since the intervals $I_n$ are consecutive, all but at most two nonempty
intersections are whole branch intervals.  Their images are $I$ or
$(a,1)$.  Only the branches cut by the two endpoints of $K$ can
produce new images.  Thus each $K$ gives finitely many new image
intervals.  Induction proves finiteness.  Every nonempty cylinder image
has positive length, which proves \eqref{eq:delta-r}.
\end{proof}

\begin{lemma}\label{lem:distortion}
For every $r\ge1$ and $C\in\cP_r$,
\begin{equation*}
 \sup_{J_C}h_C\le q^{-r},\qquad
 \frac{\sup_{J_C}h_C}{\inf_{J_C}h_C}\le4.
\end{equation*}
Moreover, $h_C$ is monotone on $J_C$.
\end{lemma}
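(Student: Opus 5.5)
The plan is to exploit that every inverse branch of $T_q^r$ is a composition of the M\"obius maps $\phi_n(x)=q/(n+x)$, and to read all three assertions off an explicit $2\times2$ matrix representation. For $C\in\cP_r$, the branch $\psi_C$ is the restriction to $J_C$ of $\phi_{n_1}\circ\cdots\circ\phi_{n_r}$ for some integers $n_i\ge m\ge1$; the indices are at least $1$ because $q>1$ makes $I_n\cap(0,1)=\emptyset$ for $n<m$. Associate to $\phi_n$ the matrix $M_n=\begin{pmatrix}0&q\\1&n\end{pmatrix}$, of determinant $-q$. The product $M=M_{n_1}\cdots M_{n_r}=\begin{pmatrix}a&b\\c&d\end{pmatrix}$ has nonnegative entries, determinant $(-q)^r$, and
\begin{equation*}
 \psi_C(x)=\frac{ax+b}{cx+d},\qquad h_C(x)=|\psi_C'(x)|=\frac{q^r}{(cx+d)^2}.
\end{equation*}

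\emph{The bound $\sup h_C\le q^{-r}$.} I would not use the matrix here but the chain rule. For $y\in(0,1)$ one has $|T_q'(y)|=q/y^2\ge q$. The points $T_q^k(\psi_C(x))$ all lie in $(0,1)$ for $0\le k<r$, so
\begin{equation*}
 h_C(x)=\frac{1}{|(T_q^r)'(\psi_C(x))|}\le q^{-r}.
\end{equation*}

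\emph{Monotonicity.} Since $c,d\ge0$ and $d>0$ (shown below), $x\mapsto cx+d$ is positive and nondecreasing on $(0,1)$. Hence $h_C=q^r(cx+d)^{-2}$ is monotone (nonincreasing) on $J_C\subset(0,1)$.

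\emph{Distortion.} The key claim is that the bottom row $(c,d)$ of $M$ satisfies $0\le c\le d$ and $d>0$. I will prove this by induction on $r$. For $r=1$ the bottom row is $(1,n_1)$ and $n_1\ge1$. For the induction step, write $M'=\begin{pmatrix}a'&b'\\c'&d'\end{pmatrix}$ and multiply on the right by $M_n$. The new bottom row is $(d',\,qc'+nd')$, and $d'\le qc'+nd'$ because $n\ge1$ and $q,c'\ge0$. Given the claim, for $x,x'\in J_C\subset(0,1)$,
\begin{equation*}
 \frac{h_C(x)}{h_C(x')}=\Big(\frac{cx'+d}{cx+d}\Big)^2\le\Big(\frac{c+d}{d}\Big)^2\le4.
\end{equation*}

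The only delicate point is keeping the order of composition consistent with the order of the matrix product, so that the induction acts on the correct row. Right multiplication by $M_n$ corresponds to precomposing with $\phi_n$, which is the innermost branch. With the bottom row transformed as above, the inequality $c\le d$ propagates regardless of which side one builds the product from.
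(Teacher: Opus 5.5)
Your proof is correct and follows the paper's argument. Both write $\psi_C$ as a M\"obius map with nonnegative coefficients and prove by induction that the denominator coefficients satisfy $c\le d$. Monotonicity and the factor $4$ then come from $h_C=q^r/(cx+d)^2$, and $\sup h_C\le q^{-r}$ comes from the chain rule with $|T_q'|\ge q$ on $(0,1)$. The only difference is that you build the product by right multiplication, where the bottom row evolves on its own as $(c',d')\mapsto(d',qc'+nd')$, whereas the paper composes on the left and must also carry the top-row inequality $B\ge A$. Your closing remark that the inequality propagates ``regardless of which side'' therefore holds for the final product, but not as a bottom-row-only induction from the left.
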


\begin{proof}
An inverse cylinder branch is a composition of maps
$\phi_n(x)=q/(n+x)$, hence is fractional-linear:
\begin{equation*}
 \psi_C(x)=\frac{A_Cx+B_C}{C_Cx+D_C}.
\end{equation*}
The coefficients may be chosen nonnegative with
\begin{equation*}
 B_C\ge A_C,\qquad D_C\ge C_C,\qquad
 |A_CD_C-B_CC_C|=q^r.
\end{equation*}
For one branch the coefficient quadruple is $(0,q,1,n)$.  Left
composition by $\phi_n$ sends
\begin{equation*}
 (A,B,C,D)\longmapsto(qC,qD,nC+A,nD+B),
\end{equation*}
which preserves the two inequalities.  It follows that
\begin{equation*}
 h_C(x)=\frac{q^r}{(C_Cx+D_C)^2}
\end{equation*}
is monotone.  Since $J_C\subset(0,1)$,
\begin{equation*}
 \frac{\sup h_C}{\inf h_C}
 \le\left(\frac{C_C+D_C}{D_C}\right)^2\le4.
\end{equation*}
Finally, $\phi_n(x)<1$ on its domain, so
\begin{equation*}
 |\phi_n'(x)|=\frac{\phi_n(x)^2}{q}\le q^{-1}.
\end{equation*}
The chain rule gives $\sup h_C\le q^{-r}$.
\end{proof}

Let
$\boldsymbol\omega=(\omega_0,\ldots,\omega_{r-1})\in\T^r$, and define
\begin{equation*}
 \cL_{\boldsymbol\omega}^{(r)}
 =\cL_{\omega_{r-1},q}\cdots\cL_{\omega_0,q}.
\end{equation*}
The digit word, hence the phase, is constant on every cylinder.  Thus
\begin{equation*}
 \cL_{\boldsymbol\omega}^{(r)}u
 =\sum_{C\in\cP_r}
 c_C\one_{J_C}h_C(u\circ\psi_C),\qquad |c_C|=1.
\end{equation*}
The phases may create jumps at the endpoints of $J_C$.  We count those
jumps by extending every summand by zero; no cancellation is used.

\begin{theorem}\label{thm:LY}
For every $r\ge1$, every
$\boldsymbol\omega\in\T^r$, and every $u\in BV(I)$,
\begin{equation}\label{eq:LY-word}
 \Var\!\left(\cL_{\boldsymbol\omega}^{(r)}u\right)
 \le4q^{-r}\Var(u)+\frac8{\delta_r}\norm u_1,
\end{equation}
and
\begin{equation*}
 \norm{\cL_{\boldsymbol\omega}^{(r)}u}_1\le\norm u_1.
\end{equation*}
Consequently,
\begin{equation}\label{eq:LY-Q}
 \Var(Q_{q,\zeta,\eta}^{\,N}u)
 \le4q^{-2N}\Var(u)+\frac8{\delta_{2N}}\norm u_1.
\end{equation}
In particular,
\begin{equation*}
 r_{\ess}(Q_{q,\zeta,\eta};BV(I))\le q^{-2}<1,\qquad
 r_{\ess}(\cL_{\omega,q};BV(I))\le q^{-1}.
\end{equation*}
\end{theorem}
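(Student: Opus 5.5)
The plan is to estimate the variation of the cylinder expansion termwise. We start from
\begin{equation*}
 \cL_{\boldsymbol\omega}^{(r)}u=\sum_{C\in\cP_r}c_C\one_{J_C}h_C(u\circ\psi_C),
\end{equation*}
and bound $\Var_I$ of each zero-extended summand. The unimodular constant $c_C$ does not change absolute variation, so the phases play no role beyond this representation. For a single cylinder, the second elementary estimate after \eqref{eq:BV-sup} gives
\begin{equation*}
 \Var_I\big(\one_{J_C}h_C(u\circ\psi_C)\big)\le\Var_{J_C}\big(h_C(u\circ\psi_C)\big)+2\sup_{J_C}\big|h_C(u\circ\psi_C)\big|.
\end{equation*}
By the product rule and the monotonicity of $h_C$ (Lemma~\ref{lem:distortion}), the first term is at most $\sup h_C\,\Var_C(u)+\Var_{J_C}(h_C)\sup_C|u|$. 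Since $h_C$ is monotone, $\Var_{J_C}(h_C)\le\sup h_C$, so this is at most $\sup h_C\,(\Var_C u+\sup_C|u|)$. Including the endpoint term, each summand has variation at most $\sup h_C\,\Var_C(u)+3\sup h_C\sup_C|u|$.

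The key step is to convert $\sup h_C\sup_C|u|$ into an $L^1$ bound with constant $1/\delta_r$. We use $\sup_C|u|\le|C|^{-1}\int_C|u|+\Var_C(u)$, together with $|C|=\int_{J_C}h_C\ge|J_C|\inf h_C\ge\delta_r\sup h_C/4$ from the distortion bound. This gives
\begin{equation*}
 \sup h_C\sup_C|u|\le\frac4{\delta_r}\int_C|u|+q^{-r}\Var_C(u).
\end{equation*}
As written, the constants come out as $4q^{-r}\Var$ plus $(12/\delta_r)\norm u_1$. To reach the stated $4q^{-r}$ and $8/\delta_r$, I would pair the two endpoint jumps with the interior variation more carefully. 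Specifically, I would bound the variation of the zero-extended piece by $\Var_{J_C}(g)+|g(\text{left}+)|+|g(\text{right}-)|$ with $g=h_C(u\circ\psi_C)$. Then I would use $|g(\text{end})|\le\inf_{J_C}|g|+\Var_{J_C}(g)$ on only one side, and use the monotone $h_C$ so that its variation and one endpoint value combine into $\sup h_C$ times a value of $|u|$. This bookkeeping is the main obstacle: getting exactly the constants $4$ and $8$ requires choosing which endpoint absorbs the monotone weight. I would also accept a proof with slightly different absolute constants if necessary, since only their finiteness and the factor $q^{-r}$ (up to $4$) matter later. Summing over $C\in\cP_r$, the cylinders are disjoint in $I$, so $\sum\Var_C(u)\le\Var(u)$ and $\sum\int_C|u|=\norm u_1$. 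This yields \eqref{eq:LY-word}. The $L^1$ bound follows from $|\cL_{\omega,q}h|\le\cL_{1,q}|h|$ and the fact that the positive operator preserves integrals.

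For \eqref{eq:LY-Q}, we write $Q^N=\cL_{\zeta,q}\cL_{\eta,q}\cdots$ as the word $\boldsymbol\omega=(\eta,\zeta,\eta,\zeta,\dots)$ of length $2N$ and apply \eqref{eq:LY-word} with $r=2N$. For the essential spectral radii, we choose $N$ with $4q^{-2N}<1$ and apply Theorem~\ref{thm:Hennion} to $L=Q^N$ with $\mathsf B=BV(I)$ and $\mathsf B_w=L^1(I)$. The inclusion is compact, and the strong norm inequality is $\norm{Q^Nu}_{BV}\le4q^{-2N}\norm u_{BV}+(1+8/\delta_{2N})\norm u_1$. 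Hennion then gives $r_{\ess}(Q^N)\le4q^{-2N}$. Since $r_{\ess}(Q)=r_{\ess}(Q^N)^{1/N}$, letting $N\to\infty$ gives $r_{\ess}(Q)\le\lim(4q^{-2N})^{1/N}=q^{-2}$. The same argument with single-letter words gives $r_{\ess}(\cL_{\omega,q})\le q^{-1}$. Boundedness of these operators on $BV$ itself is the case $N=1$, respectively $r=1$.
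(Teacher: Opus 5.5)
Your route is the paper's: expand $\cL^{(r)}_{\boldsymbol\omega}u$ over $\cP_r$, bound each zero-extended summand separately so that the phases $c_C$ and cancellations play no role, use $|C|\ge\delta_r\sup_{J_C}h_C/4$ to convert sup-terms into $L^1$-terms, sum over the disjoint cylinders, and finish with Theorem~\ref{thm:Hennion} for $Q^N$ (alternating word) and $N$-th roots in the Calkin algebra. The $L^1$ contraction and both essential-spectral-radius bounds come out correctly. They only need the strong coefficient $4q^{-r}$, which you do obtain.

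\textbf{The gap.} You have not proved the inequality as stated. For $g=h_C(u\circ\psi_C)$, the bookkeeping you actually carry out gives only $\Var_I(\one_{J_C}g)\le4q^{-r}\Var_C(u)+(12/\delta_r)\int_C|u|$. Hence \eqref{eq:LY-word} and \eqref{eq:LY-Q} are established with $12/\delta_r$ in place of the stated $8/\delta_r$. Your repair is only sketched. Its ingredient $|g(\text{end})|\le\inf_{J_C}|g|+\Var_{J_C}(g)$ does not help: it brings back a second copy of $\Var_{J_C}(g)$, and with it a second $\sup_C|u|\,\Var_{J_C}(h_C)$ term. Your other ingredient, combining the variation of the monotone weight with the endpoint values, is the right one.

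\textbf{The fix.} Start from the trace form $\Var_{J_C}(g)+|g(\alpha+)|+|g(\beta-)|$ with $J_C=(\alpha,\beta)$. Because $h_C$ is positive and monotone, $\Var_{J_C}(h_C)+h_C(\alpha+)+h_C(\beta-)=2\sup_{J_C}h_C$ exactly. Your crude count ($\Var_{J_C}(h_C)\le\sup h_C$ plus $2\norm{g}_{\infty,J_C}$) charges $3\sup_{J_C}h_C$ instead. With the exact count, $\Var_I(\one_{J_C}g)\le\sup_{J_C}h_C\,\Var_C(u)+2\sup_{J_C}h_C\,\sup_C|u|$. Your own conversion $\sup_C|u|\le|C|^{-1}\int_C|u|+\Var_C(u)$ then gives $3q^{-r}\Var_C(u)+(8/\delta_r)\int_C|u|$, which is slightly better than \eqref{eq:LY-word}.

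\textbf{The paper's way.} The paper reaches the stated constants differently. It subtracts the cylinder mean, $u=u_C+v_C$ with $\norm{v_C}_{\infty,C}\le\Var_C(u)$. It bounds the fluctuation piece crudely by $4\sup_{J_C}h_C\,\Var_C(u)$. It uses the exact monotone count only for the constant piece $u_C\one_{J_C}h_C$, whose zero extension has variation at most $2|u_C|\sup_{J_C}h_C\le(8/\delta_r)\int_C|u|$.

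\textbf{A smaller point.} When you add termwise variations over the countably many cylinders, say why this bounds the variation of the sum. Either note that the cylinder series converges absolutely in $BV(I)$, as the paper records, or use lower semicontinuity of variation along the $L^1$-convergent partial sums.
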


\begin{proof}
Fix a $BV$ representative with one-sided traces.  If
$J=(a,b)\subset I$ and $w\in BV(J)$, then zero extension gives
\begin{equation}\label{eq:zero-extension-traces}
 \Var_I(\one_Jw)
 \le \Var_J(w)+|w(a+)|+|w(b-)|.
\end{equation}
When an endpoint of $J$ is also an endpoint of $I$, the corresponding
jump is omitted.  This formula records the endpoint jumps before any
cancellation between different cylinder phases can occur.

For $C\in\cP_r$, set
\begin{equation*}
 u_C=\frac1{|C|}\int_Cu(y)\,\dd y,\qquad v_C=u-u_C.
\end{equation*}
For almost every $x\in C$,
\begin{equation*}
 |v_C(x)|
 \le\frac1{|C|}\int_C|u(x)-u(y)|\,\dd y
 \le\Var_C(u),
\end{equation*}
and hence $\norm{v_C}_{\infty,C}\le\Var_C(u)$.  We split each cylinder
summand as
\begin{equation*}
 c_C\one_{J_C}h_C(u\circ\psi_C)
 =c_C\one_{J_C}h_C(v_C\circ\psi_C)
  +c_Cu_C\one_{J_C}h_C.
\end{equation*}
Since $|c_C|=1$, the phase does not affect either variation estimate.
The product rule, \eqref{eq:zero-extension-traces}, and the monotonicity of
$h_C$ give
\begin{align}
 \Var_I\!\left(\one_{J_C}h_C(v_C\circ\psi_C)\right)
 &\le(\sup_{J_C}h_C)\Var_C(u)
  +\norm{v_C}_{\infty,C}\Var_{J_C}(h_C)\notag\\
 &\qquad
  +2(\sup_{J_C}h_C)\norm{v_C}_{\infty,C}\notag\\
 &\le4(\sup_{J_C}h_C)\Var_C(u).
 \label{eq:residual-variation}
\end{align}
Here $\Var_{J_C}(h_C)\le\sup_{J_C}h_C$, because $h_C$ is positive
and monotone.  For the constant part, positivity and monotonicity imply
that the variation of the zero extension of $h_C$ is at most
$2\sup_{J_C}h_C$: the interior variation and the two endpoint traces
sum to twice the larger endpoint value.  Thus
\begin{equation}\label{eq:constant-variation}
 \Var_I(u_C\one_{J_C}h_C)
 \le2|u_C|\sup_{J_C}h_C.
\end{equation}
The distortion bound and \eqref{eq:delta-r} yield
\begin{equation}\label{eq:cylinder-length}
 |C|=\int_{J_C}h_C(x)\,\dd x
 \ge |J_C|\inf_{J_C}h_C
 \ge\frac{\delta_r}{4}\sup_{J_C}h_C.
\end{equation}
Since $|u_C|\le |C|^{-1}\int_C|u(y)|\,\dd y$,
\eqref{eq:constant-variation}--\eqref{eq:cylinder-length} give
\begin{equation}\label{eq:weak-cylinder}
 \Var_I(u_C\one_{J_C}h_C)
 \le\frac8{\delta_r}\int_C|u(y)|\,\dd y.
\end{equation}

The rank-$r$ cylinders are pairwise disjoint modulo endpoints.  Hence
\begin{equation*}
 \sum_{C\in\cP_r}\Var_C(u)\le\Var_I(u),\qquad
 \sum_{C\in\cP_r}\int_C|u(y)|\,\dd y=\norm u_1.
\end{equation*}
Equations \eqref{eq:residual-variation} and
\eqref{eq:weak-cylinder}, together with
$\sup_{J_C}h_C\le q^{-r}$, show that the sum of the variations of the
zero-extended cylinder summands is finite.  Moreover, change of variables
on every cylinder gives
\begin{equation*}
 \sum_{C\in\cP_r}
 \norm{\one_{J_C}h_C(u\circ\psi_C)}_1
 =\sum_{C\in\cP_r}\int_C|u(y)|\,\dd y
 =\norm u_1.
\end{equation*}
Thus the cylinder series converges absolutely in $BV(I)$.  We may sum
the estimates without using cancellation between phases or between common
image endpoints, and obtain \eqref{eq:LY-word}.  The same change of
variables and the triangle inequality give the $L^1$ contraction.

For $Q^N$, take $r=2N$ and the alternating word
$(\eta,\zeta,\ldots,\eta,\zeta)$.  This proves \eqref{eq:LY-Q}.  Put
\begin{equation*}
 a_N=4q^{-2N},\qquad C_N=1+\frac8{\delta_{2N}}.
\end{equation*}
Combining \eqref{eq:LY-Q} with the $L^1$ contraction gives the strong
and weak estimates
\begin{equation*}
 \norm{Q^Nu}_{BV}
 \le a_N\norm u_{BV}+C_N\norm u_1,
 \qquad
 \norm{Q^Nu}_1\le\norm u_1.
\end{equation*}
For every sufficiently large $N$, we have $a_N<1$.  Apply
Theorem~\ref{thm:Hennion} to $L=Q^N$, with
$\mathsf B=BV(I)$ and $\mathsf B_w=L^1(I)$.  All its assumptions are
now explicit: the first estimate makes $Q^N$ bounded on $BV(I)$, the
second makes it a contraction on $L^1(I)$, the two actions agree on
$BV(I)$, and $BV(I)\hookrightarrow L^1(I)$ is compact.  We obtain
\begin{equation*}
 r_{\ess}(Q^N;BV(I))\le a_N=4q^{-2N}.
\end{equation*}
This application is made separately for each sufficiently large fixed
$N$; no estimate on
$C_N$ uniform in $N$ is required.  Since the essential spectral radius
is the spectral radius in the Calkin algebra, spectral mapping gives
\begin{equation*}
 r_{\ess}(Q;BV(I))^N=r_{\ess}(Q^N;BV(I))\le4q^{-2N}.
\end{equation*}
Taking $N$-th roots and then letting $N\to\infty$ yields
$r_{\ess}(Q;BV(I))\le q^{-2}$.  For
$\cL_{\omega,q}^N$, use \eqref{eq:LY-word} with the constant word
$(\omega,\ldots,\omega)$.  The same argument, now with strong
coefficient $4q^{-N}$, gives
$r_{\ess}(\cL_{\omega,q};BV(I))\le q^{-1}$.
\end{proof}

\begin{remark}
The weak constant $8/\delta_{2N}$ may depend on $N$.  Only the strong
coefficient $4q^{-2N}$ has the iterate-uniform exponential form needed
for the essential spectral-radius estimate.  This is why the finite-image
lemma is an essential part of the proof.
\end{remark}

\section{Supercritical infinite-dimensionality}

Throughout this section, let
\begin{equation*}
 \mathsf B=BV((0,1)),\qquad P=\cL_{1,q},\qquad
 Q=Q_{q,\zeta,\eta},\qquad A=A_{q,\zeta,\eta},
\end{equation*}
and put $d_q(x)=\lfloor q/x\rfloor$ away from the endpoints of the
fundamental intervals.

We first prove the dimension assertion of Theorem~\ref{thm:intro-main}.
By Proposition~\ref{prop:graph}, it is enough to solve
\begin{equation*}
 (I-Q_{q,\zeta,\eta})u=A_{q,\zeta,\eta}v
\end{equation*}
for every $v\in BV((1,q))$.  The Lasota--Yorke estimate makes $I-Q$
Fredholm on $BV((0,1))$; it remains to determine its kernel and verify
the untwisted compatibility condition.

\begin{proposition}\label{prop:Fredholm}
If $q>1$, then $I-Q:\mathsf B\to\mathsf B$ is Fredholm of index zero.
\end{proposition}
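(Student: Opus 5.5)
The plan is to use the essential spectral radius bound from Theorem~\ref{thm:LY}, $r_{\ess}(Q;\mathsf B)\le q^{-2}<1$. We then convert it into the Fredholm property of $I-Q$ by the standard Calkin-algebra argument. Boundedness of $Q$ on $\mathsf B$ is already contained in \eqref{eq:LY-Q} with $N=1$, combined with the $L^1$ contraction. So $Q\in\mathcal L(\mathsf B)$, and its class $\pi(Q)$ in $\mathcal L(\mathsf B)/\mathcal K(\mathsf B)$ has spectral radius at most $q^{-2}$.

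First, since $1>q^{-2}\ge r(\pi(Q))$, the element $\pi(I-Q)=\pi(I)-\pi(Q)$ is invertible in the Calkin algebra, with inverse given by the Neumann series $\sum_{j\ge0}\pi(Q)^j$. By Atkinson's theorem, $I-Q$ is Fredholm on $\mathsf B$.

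For index zero, consider the family $I-\lambda Q$ with $\lambda\in[0,1]$. For every such $\lambda$ we have $r(\pi(\lambda Q))\le\lambda q^{-2}<1$, so each $I-\lambda Q$ is Fredholm by the same argument. The map $\lambda\mapsto I-\lambda Q$ is norm continuous into $\mathcal L(\mathsf B)$, and the Fredholm index is locally constant on the open set of Fredholm operators. Hence the index is constant along the path. At $\lambda=0$ the operator is $I$, which has index zero, so $\operatorname{ind}(I-Q)=0$.

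An alternative route to index zero is available if one prefers not to use the homotopy. One writes $Q^N=K+R$ with $K$ compact and $\norm R<1$ for large $N$, which is possible because $r_{\ess}(Q^N)<1$. Then $I-Q$ divides $I-Q^N=(I-R)-K$, which is invertible plus compact. The homotopy argument is cleaner, however.

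I do not expect a genuine obstacle here. The only point needing care is the spectral-radius characterization of $r_{\ess}$ as defined in the paper, namely the spectral radius in the Calkin algebra. This gives invertibility of $\pi(I-\lambda Q)$ directly, without any further decomposition.
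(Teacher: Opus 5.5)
Your proof is correct and is essentially the paper's argument: the bound $r_{\ess}(Q;\mathsf B)\le q^{-2}<1$ from Theorem~\ref{thm:LY} makes $I-sQ$ Fredholm for every $s\in[0,1]$, and homotopy invariance of the index transfers the index zero of $I$ to $I-Q$. One caveat on your side remark: factoring $I-Q^N=(I-Q)\sum_{j<N}Q^j$ yields only that $I-Q$ is Fredholm with $\operatorname{ind}(I-Q)+\operatorname{ind}\bigl(\sum_{j<N}Q^j\bigr)=0$. Getting index zero from it would still need a continuity argument, so the homotopy you chose is the right main route.
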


\begin{proof}
By Theorem~\ref{thm:LY}, the essential spectral radius of $Q$ is less
than one.  Hence $I-sQ$ is Fredholm for $0\le s\le1$, because
\begin{equation*}
 r_{\ess}(sQ)=s\,r_{\ess}(Q)<1.
\end{equation*}
The Fredholm index is constant along this path and equals the index of the
identity at $s=0$, namely zero.
\end{proof}

The interval-growth argument in \cite[Lemma~7.1]{CHM2014} is refined
below to a single full branch, which transports one coherent phase across
the state space.

\begin{lemma}\label{lem:full-cylinder}
For every nonempty open interval $J\subset(0,1)$, there are an integer
$r\ge1$ and an interval $C\subset J$ such that
\begin{equation*}
 T_q^r:C\longrightarrow(0,1)
\end{equation*}
is a diffeomorphism.
\end{lemma}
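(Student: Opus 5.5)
We will prove Lemma~\ref{lem:full-cylinder} by an interval-growth argument under the uniformly expanding map $T_q$, organized through the cylinder structure of Section~3. The key point is that every branch $I_n$ with $n\ge m+1$ is mapped by $T_q$ diffeomorphically onto all of $(0,1)$. It therefore suffices to find an iterate $r_0$ and a subinterval $C_0\subset J$ such that $T_q^{r_0}$ is a diffeomorphism from $C_0$ onto an interval that contains a whole full branch $I_n$ with $n\ge m+1$. Setting $C=C_0\cap T_q^{-r_0}(I_n)$, that is $C=(T_q^{r_0}|_{C_0})^{-1}(I_n)$, and $r=r_0+1$, the map $T_q^{r}|_C=T_q|_{I_n}\circ T_q^{r_0}|_C$ is then a diffeomorphism onto $(0,1)$.

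To produce such an image interval, we grow intervals as follows. Let $K_0=J$. If $K_k$ is open and contains two consecutive endpoints $q/(n+1)$ and $q/n$ of monotonicity intervals with $n\ge m+1$, then it contains the full branch $I_n$ and we stop. Otherwise $K_k$ meets at most two monotonicity intervals, so one of the pieces $K_k\cap I_n$ has length at least $|K_k|/2$. (The branch $I_m$ may occur here; it is only the partial branch when $a>0$.)

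We apply $T_q$ to that piece. Since $|\phi_n'|\le q^{-1}$ by Lemma~\ref{lem:distortion}, the derivative of $T_q$ is at least $q$ on every branch. More precisely, $|T_q'(y)|=q/y^2$, which exceeds $q$ on $(0,1)$ and exceeds $q^{-1}(n)^2$ on $I_n$. We set $K_{k+1}=T_q(K_k\cap I_n)$, so that $|K_{k+1}|\ge (q/2)|K_k|$.

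This factor is only $q/2$, which may fail to exceed one when $1<q\le2$; this is the main obstacle. Two refinements handle it.

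First, the halving is needed only when $K_k$ actually straddles an endpoint $q/n$. Near $0$ the branches are very short, of length about $q/n^2$. A short interval $K_k$ that straddles such an endpoint with large $n$ and does not contain a full branch must have length below about $2q/n^2$. On the other hand, the local expansion there is of order $n^2/q$. So either the larger piece is long compared with $q/n^2$, in which case the expansion $n^2/q$ easily beats the factor $2$, or $K_k$ already contains a full branch.

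Second, for the branches near $1$, where $n\le$ some fixed $N_q$, the expansion factor is only at least $q$. For these we would use a higher iterate in the style of \cite[Lemma~7.1]{CHM2014}. Every cylinder of rank $r$ has an image $J_C$ taken from the finite family of Lemma~\ref{lem:finite-images}, and its length is at least $\delta_r$. Meanwhile the number of endpoints of $\cP_r$-cylinders inside a short interval grows sub-exponentially compared with the contraction $q^{-r}$ of the inverse branches.

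Concretely, the first approach we would try is a lengths argument. Suppose that for every $r$, $J$ contains no whole rank-$r$ cylinder $C$ with $J_C=(0,1)$. Then $J$ meets at most two such cylinders, since cylinders are consecutive, and their lengths are at most $q^{-r}$ up to the distortion factor $4$. Since $|J|>0$ is fixed and the cylinder lengths tend to zero, this is a contradiction, provided the cylinders whose image is a proper subinterval $(a,1)$ or another truncated image can themselves be refined into full-image cylinders.

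That proviso is secured by the Markov-type observation that each truncated image from Lemma~\ref{lem:finite-images} has positive length at least $\delta_r$. Hence its next refinement contains a branch $I_n$ with $n$ large, which maps onto $(0,1)$. Combined with the distortion bound, this shows that every rank-$r$ cylinder contains a rank-$(r+s)$ full-image subcylinder for a bounded $s$. Since the cylinders inside $J$ have diameters tending to zero (by $\sup h_C\le q^{-r}$), some rank-$r$ cylinder lies inside $J$, and its full-image subcylinder is the required $C$.

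We expect verifying this uniform refinement step, that a truncated image always contains a full branch after boundedly many steps, to be the delicate point. We would check it using the explicit fractional-linear form of $\psi_C$ from Lemma~\ref{lem:distortion}.
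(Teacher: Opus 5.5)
There is a genuine gap, and it is where you locate the difficulty yourself. At a discontinuity you keep the \emph{longer} piece of $K_k$, which costs a factor $2$ and creates the $q/2$ obstacle. Your first refinement is only heuristic, and you defer the branches near $1$ to a higher-iterate argument that you do not carry out. In the cylinder-length argument, the step showing that $J$ contains a whole rank-$r$ cylinder for large $r$ is fine. But the argument then rests entirely on the claim that a truncated image $J_C$ has a next refinement containing a full branch $I_n$ with $n$ large, and that does not follow from $|J_C|\ge\delta_r$. The branches with large $n$ accumulate only at $0$, so length alone gives nothing. For instance, if $a=q-m>m/(m+1)$ (e.g.\ $q=1.9$), the rank-one image $(a,1)$ contains no $I_n$ with $n\ge m+1$: it splits into the truncated branch $I_m$ and a proper piece of $I_{m+1}$. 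Showing that such a $J_C$ eventually contains a full-image subcylinder is exactly the lemma applied to the interval $J_C$, so the reduction is circular. Moreover, $\delta_r$ is not bounded below uniformly in $r$, so no uniform $s$ comes from it.

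The missing idea is to keep, at a discontinuity, the piece adjacent to it on the left rather than the longer piece. Suppose an open interval $K$ contains $q/n$ with $n>q$. For small $\varepsilon>0$, the interval $K'=(q/(n+\varepsilon),q/n)$ lies in $K\cap I_n$, and $T_q(y)=q/y-n$ maps it diffeomorphically onto $(0,\varepsilon)$. Because this image abuts $0$, it contains the full branch $I_N$ for all large $N$. Pulling $I_N$ back through $T_q|_{K'}$ gives a rank-two full cylinder in $K$, however short $K'$ is. So halving is never needed. Iterate $K_{k+1}=T_q(K_k)$ only while $K_k$ avoids $\{q/n:n>q\}$, i.e.\ while $K_k$ lies in a single branch, where $|K_{k+1}|\ge q|K_k|$. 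Since $|K_k|\le 1$, this stops after finitely many steps. Pulling the rank-two cylinder back along the earlier continuity branches then yields $C\subset J$. This is the paper's proof, and it makes both of your refinements unnecessary.
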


\begin{proof}
The proof refines the interval-growth device of
\cite[Lemma~7.1]{CHM2014}.  Let
\begin{equation*}
 \mathcal D_q=\{q/n:n\in\Z,\ n>q\}
\end{equation*}
be the discontinuity set.  If an open interval $K$ contains
$q/n\in\mathcal D_q$, choose $\varepsilon>0$ so that
\begin{equation*}
 K'=\left(\frac q{n+\varepsilon},\frac qn\right)
\end{equation*}
lies in $K$.  The map $T_q$ sends $K'$ diffeomorphically onto
$(0,\varepsilon)$.  A full branch
\begin{equation*}
 I_N=\left(\frac q{N+1},\frac qN\right)
\end{equation*}
lies in $(0,\varepsilon)$ for large $N$, so its pullback through
$T_q|_{K'}$ is a rank-two full cylinder inside $K$.

Starting with $J_0=J$, continue with $J_{k+1}=T_q(J_k)$ while $J_k$
avoids $\mathcal D_q$.  On every such continuity interval,
\begin{equation*}
 |T_q'(x)|=q/x^2\ge q>1,
\end{equation*}
and hence
\begin{equation*}
 |J_k|\ge q^k|J|
\end{equation*}
as long as the construction continues.  During that time
$J_k\subset(0,1)$, and hence $|J_k|\le1$.  Since $q>1$, the displayed
lower bound exceeds $1$ for all sufficiently large $k$, a contradiction.
Thus some $J_k$ meets $\mathcal D_q$.  Apply the preceding rank-two
construction there and pull it back along the earlier continuity
branches.
\end{proof}

The Lasota--Yorke inequality supplies the following $L^1$-to-$BV$
regularization for unit-modulus eigenvectors.  This bridge is needed because
quasi-compactness on $BV$ alone says nothing about a general $L^1$
vector.  For the positive operator it follows from
\cite[Theorem~C]{CHM2014} and the spectral results in
\cite[Theorem~2]{Rychlik1983}.  We retain the argument because it also
applies to the complex-weighted twisted operator $Q$, which is generally
not positivity-preserving.

\begin{lemma}\label{lem:peripheral-BV}
Let $L=P$, or let $L=Q_{q,\zeta,\eta}$.  If $|\lambda|=1$, then
\begin{equation*}
 \ker(L-\lambda I:L^1((0,1))\to L^1((0,1)))\subset BV((0,1)).
\end{equation*}
\end{lemma}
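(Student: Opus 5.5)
The approach is to approximate the $L^1$ eigenvector by $BV$ functions, push the approximants forward along the eigenvector relation, and use the Lasota--Yorke inequality of Theorem~\ref{thm:LY} at a \emph{fixed} iterate. This gives a family of $BV$ functions that converges to the eigenvector in $L^1$ and has uniformly bounded variation. Lower semicontinuity of variation under $L^1$ convergence then places the eigenvector in $BV((0,1))$. No positivity is used, so the argument treats $P$ and the twisted operator $Q_{q,\zeta,\eta}$ identically.

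\emph{Step 1: fix one iterate.} Let $u\in L^1((0,1))$ satisfy $Lu=\lambda u$ with $|\lambda|=1$. Choose an integer $N$ and set $M=N$ if $L=P$, and $M=2N$ if $L=Q$. The point of this step is that a single iterate $L^N$ satisfies a strict Lasota--Yorke inequality:
\begin{equation*}
 \Var(L^Nw)\le a\Var(w)+b\norm w_1,\qquad w\in BV(I),
\end{equation*}
where $a=4q^{-M}<1$ and $b=8/\delta_M$. For $P$ this is \eqref{eq:LY-word} applied to the constant word $(1,\dots,1)$ of length $N$. For $Q$ it is \eqref{eq:LY-Q}. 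Since $q>1$, taking $N$ large enough gives $a<1$. I expect the only real obstacle to lie here. The weak constant $8/\delta_r$ in \eqref{eq:LY-word} may blow up as $r\to\infty$, so one must not apply the inequality to $L^n$ with $n$ growing. Fixing $N$ once and iterating $L^N$ avoids this.

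\emph{Step 2: iterate.} Iterating the inequality of Step~1 and using the $L^1$ contraction $\norm{L^jw}_1\le\norm w_1$ gives, for every $k\ge1$,
\begin{equation*}
 \Var(L^{kN}w)\le a^k\Var(w)+\frac{b}{1-a}\norm w_1.
\end{equation*}

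\emph{Step 3: approximate and push forward.} Given $\varepsilon>0$, pick $w\in BV(I)$ with $\norm{u-w}_1\le\varepsilon$; this is possible because $BV$ functions (for instance step functions) are dense in $L^1$. Then choose $k$ so large that $a^k\Var(w)\le1$. Set
\begin{equation*}
 g_\varepsilon=\lambda^{-kN}L^{kN}w.
\end{equation*}
Because $u=\lambda^{-kN}L^{kN}u$, $|\lambda|=1$, and $L$ is an $L^1$ contraction,
\begin{equation*}
 \norm{g_\varepsilon-u}_1=\norm{L^{kN}(w-u)}_1\le\varepsilon.
\end{equation*}
Multiplying by the unimodular constant $\lambda^{-kN}$ does not change variation, so Step~2 gives
\begin{equation*}
 \Var(g_\varepsilon)\le1+\frac{b}{1-a}\big(\norm u_1+\varepsilon\big).
\end{equation*}
This bound is independent of $\varepsilon$.

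\emph{Step 4: conclude.} Let $\varepsilon\to0$. Then $g_\varepsilon\to u$ in $L^1$ while $\Var(g_\varepsilon)$ stays uniformly bounded. Lower semicontinuity of the essential variation under $L^1$ convergence yields
\begin{equation*}
 \Var(u)\le1+\frac{b}{1-a}\norm u_1<\infty,
\end{equation*}
so $u\in BV((0,1))$. Letting $\varepsilon\to 0$ in the bound of Step~3 instead gives $\Var(u)\le 1+b(1-a)^{-1}\norm u_1$ with the same constants. One could also replace the additive $1$ by any $\varepsilon'>0$ through the choice of $k$, giving $\Var(u)\le b(1-a)^{-1}\norm u_1$.
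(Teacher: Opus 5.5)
Your proposal is correct and follows the paper's proof essentially step for step. You fix one iterate $L^N$ satisfying a strict Lasota--Yorke inequality, iterate it, push $BV$ approximants of the eigenvector forward by $\lambda^{-kN}L^{kN}$ with $k$ chosen after the approximant, and conclude by lower semicontinuity of variation. The paper also cites the compact embedding $BV\hookrightarrow L^1$; as your argument shows, that is unnecessary, because the $L^1$ limit is already known to be the eigenvector.
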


\begin{proof}
By Theorem~\ref{thm:LY}, there are an integer $N\ge1$, constants
$0<a<1$ and $C<\infty$, and $R=L^N$, such that
\begin{equation*}
 \norm{Ru}_{BV}\le a\norm u_{BV}+C\norm u_1,
 \qquad \norm{Ru}_1\le\norm u_1.
\end{equation*}
Indeed, one uses the constant word of length $N$ for $P$, and the
alternating word of length $2N$ for $Q$.  Iteration gives
\begin{equation}\label{eq:peripheral-iterate}
 \norm{R^ku}_{BV}
 \le a^k\norm u_{BV}+\frac{C}{1-a}\norm u_1.
\end{equation}
Suppose that $Lh=\lambda h$ in $L^1((0,1))$, and put $\mu=\lambda^N$.
Choose $g_j\in BV((0,1))$ with $g_j\to h$ in $L^1((0,1))$, and choose
$k_j\ge j$ so that $a^{k_j}\norm{g_j}_{BV}\le1$.  The
phase-corrected sequence
\begin{equation*}
 u_j=\mu^{-k_j}R^{k_j}g_j
\end{equation*}
is uniformly bounded in $BV((0,1))$ by \eqref{eq:peripheral-iterate}, while
\begin{equation*}
 \norm{u_j-h}_1
 =\norm{R^{k_j}g_j-R^{k_j}h}_1
 \le\norm{g_j-h}_1\longrightarrow0.
\end{equation*}
Compactness of $BV((0,1))\hookrightarrow L^1((0,1))$, together with lower
semicontinuity of variation, gives $h\in BV((0,1))$.
\end{proof}

\begin{lemma}\label{lem:equality-phase}
Let $r\ge1$, let $\chi:(0,1)\to\T$ be measurable, and define
\begin{equation*}
 (L_{\chi,r}h)(x)
 =
 \sum_{T_q^ry=x}
 \frac{\chi(y)}{|(T_q^r)'(y)|}h(y).
\end{equation*}
Suppose that $h\in L^1((0,1))$, $|\lambda|=1$, and
\begin{equation*}
 L_{\chi,r}h=\lambda h,\qquad
 P^r|h|=|h|>0\quad\mathrm{a.e.}
\end{equation*}
If $\phi=h/|h|$, then
\begin{equation*}
 \lambda\phi(T_q^ry)=\chi(y)\phi(y)
 \quad\mathrm{a.e.}\ y\in(0,1).
\end{equation*}
\end{lemma}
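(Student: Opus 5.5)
The proof is the standard equality case of the triangle inequality. It combines the pointwise bound $|L_{\chi,r}h|\le P^r|h|$ with duality against the constant function $1$.

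First, since $|\chi|=1$, the triangle inequality gives, for almost every $x$,
\begin{equation*}
 |h(x)|=|\lambda h(x)|=|(L_{\chi,r}h)(x)|
 \le\sum_{T_q^ry=x}\frac{|h(y)|}{|(T_q^r)'(y)|}=(P^r|h|)(x)=|h(x)|.
\end{equation*}
Equality therefore holds throughout for almost every $x$. Equality in the triangle inequality for an absolutely convergent series of complex numbers means that all nonzero terms share a common argument, namely that of the sum. Since $|h|>0$ almost everywhere, every preimage term is nonzero. The countable preimage set of almost every $x$ avoids the null set where $h=0$, because each inverse branch $\psi_C$ is a diffeomorphism and so preserves null sets. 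Hence, for almost every $x$ and every $y$ with $T_q^ry=x$,
\begin{equation*}
 \chi(y)\phi(y)=\frac{(L_{\chi,r}h)(x)}{|(L_{\chi,r}h)(x)|}=\frac{\lambda h(x)}{|h(x)|}=\lambda\phi(x).
\end{equation*}

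To transfer this to a statement almost everywhere in $y$, write $E$ for the exceptional null set of $x$. The set $(T_q^r)^{-1}(E)$ is a countable union of the images $\psi_C(E\cap J_C)$ over the rank-$r$ cylinders $C\in\cP_r$. Each of these images is null, since $\psi_C$ is a Möbius diffeomorphism, so the whole preimage is null. Outside this null set, and outside the endpoint set of the cylinders (which is countable), every $y$ satisfies $\lambda\phi(T_q^ry)=\chi(y)\phi(y)$.

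The main point needing care is the bookkeeping of null sets: the preimage of a null set is null, and $h\ne0$ at all preimages of almost every $x$. Both facts reduce to the nonsingularity of the inverse branches. The argument uses only the pointwise formula of $L_{\chi,r}$ as a sum over the cylinders of $\cP_r$, not the strict positivity of $P^r$. The hypothesis $P^r|h|=|h|$ is exactly what forces the equality case.
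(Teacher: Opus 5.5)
Your proposal is correct and follows essentially the same route as the paper. Equality in $|L_{\chi,r}h|\le P^r|h|$ forces every nonzero branch term to carry the argument $\lambda\phi(x)$ of the sum, and the nonsingularity of the countably many inverse branches of $T_q^r$ transfers the exceptional null set from the target variable $x$ to $y$.

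The differences are cosmetic. You push $\{h=0\}$ forward to the target side, whereas the paper simply adjoins it on the $y$-side. Your opening reference to duality against $1$ is never used, since the hypothesis $P^r|h|=|h|$ already gives the pointwise equality.
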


\begin{proof}
For almost every $x$, both inverse-branch sums converge absolutely and
\begin{equation*}
 \left|
 \sum_{T_q^ry=x}
 \frac{\chi(y)}{|(T_q^r)'(y)|}h(y)
 \right|
 =
 |h(x)|
 =
 \sum_{T_q^ry=x}
 \frac{|h(y)|}{|(T_q^r)'(y)|}.
\end{equation*}
For an absolutely summable family $(z_j)$ satisfying
\begin{equation*}
 \left|\sum_jz_j\right|=\sum_j|z_j|>0,
\end{equation*}
all nonzero $z_j$ have the argument of the sum.  Indeed, if
\begin{equation*}
 \vartheta=\frac{\sum_jz_j}{|\sum_jz_j|},
\end{equation*}
then
\begin{equation*}
 \sum_j\left(|z_j|-\operatorname{Re}
 (\overline\vartheta z_j)\right)=0,
\end{equation*}
and every summand is nonnegative.  Applying this observation at the
target point $x$ gives
\begin{equation*}
 \chi(y)\phi(y)=\lambda\phi(x)
\end{equation*}
for every nonzero branch contribution.  On each continuity cylinder,
$T_q^r$ is a nonsingular $C^1$ diffeomorphism.  Pulling back the
exceptional target set, adjoining the null set where $h=0$, and taking
the countable union over the cylinders proves the assertion.
\end{proof}

We now isolate the positive spectral input needed for the phase analysis.
The strategy follows the HUP argument in
\cite[Theorem~7.2]{CHM2014}: a Lasota--Yorke estimate first
produces a $BV$ invariant density, and interval covering then yields
positivity and peripheral rigidity.  We include the short argument because
the present positive-branch map has a truncated edge branch, and the
twisted equality case below requires the explicit lower bound for its
density.

\begin{lemma}\label{lem:positive-spectrum}
For every $q>1$, there is a unique normalized invariant density
$\rho_q\in BV((0,1))$ such that
\begin{equation*}
 P\rho_q=\rho_q,\qquad \rho_q>0\ \mathrm{a.e.},\qquad
 \int_0^1\rho_q(x)\,\dd x=1.
\end{equation*}
Moreover,
\begin{equation}\label{eq:rho-lower}
 \operatorname*{ess\,inf}_{(0,1)}\rho_q>0,
 \qquad \sigma_{\mathrm p}(P;L^1((0,1)))\cap\T=\{1\},
 \qquad
 \ker(I-P^2:L^1((0,1))\to L^1((0,1)))=\C\rho_q.
\end{equation}
\end{lemma}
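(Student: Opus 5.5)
We combine the quasi-compactness of $P$ on $\mathsf B$ from Theorem~\ref{thm:LY} with positivity and the full-branch covering of Lemma~\ref{lem:full-cylinder}.

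\emph{Existence of $\rho_q$.} Consider the Ces\`aro averages $\frac1n\sum_{k<n}P^k\one$. By \eqref{eq:peripheral-iterate} applied to $R=P^N$, together with boundedness of $P$ on $\mathsf B$, these averages are uniformly bounded in $\mathsf B$. They are nonnegative and have integral one, because $P$ preserves integrals of nonnegative functions. Compactness of $\mathsf B\hookrightarrow L^1$ gives an $L^1$-limit point $\rho_q\in\mathsf B$. Since the averages satisfy $P\,\mathrm{avg}_n-\mathrm{avg}_n=O(1/n)$ in $L^1$, we get $P\rho_q=\rho_q$, $\rho_q\ge0$ and $\int\rho_q=1$.

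\emph{Lower bound.} Since $\rho_q\in BV$ is nonzero and nonnegative, there are an open interval $J$ and $c>0$ with $\rho_q\ge c$ a.e.\ on $J$. Lemma~\ref{lem:full-cylinder} supplies a full cylinder $C\subset J$ with $T_q^r:C\to(0,1)$ a diffeomorphism. Its inverse branch $\psi_C$ has derivative $h_C\ge\frac14\sup h_C>0$ by Lemma~\ref{lem:distortion}. Keeping only this branch in $P^r$ gives, for a.e.\ $x\in(0,1)$,
\begin{equation*}
 \rho_q(x)=(P^r\rho_q)(x)\ge h_C(x)\,\rho_q(\psi_C(x))\ge c\inf h_C>0.
\end{equation*}
This proves \eqref{eq:rho-lower}, first part, and in particular $\rho_q>0$ a.e.

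\emph{Peripheral spectrum.} Let $Ph=\lambda h$ with $|\lambda|=1$ and $h\in L^1$. Lemma~\ref{lem:peripheral-BV} gives $h\in\mathsf B$. The inequality $|h|=|Ph|\le P|h|$, together with $\int P|h|=\int|h|$, forces $P|h|=|h|$.

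We next show that every nonnegative $P$-invariant $g\in L^1$ is a multiple of $\rho_q$; this is the main obstacle. Take $g=|h|$. Then $g\in\mathsf B$, and the full-branch argument above shows that either $g=0$ or $g$ is bounded below a.e. For any $g\ne0$, set $t=\operatorname{ess\,inf}(g/\rho_q)$. The function $g-t\rho_q$ is a nonnegative invariant $BV$ function which is not bounded below, since its essential infimum relative to $\rho_q$ is $0$ and $\rho_q$ is bounded above. By the dichotomy, $g-t\rho_q=0$. Making ``ess inf $=0$'' compatible with the BV representative requires care, and I would argue it through one-sided traces at near-infimum points.

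Hence $|h|=t\rho_q>0$ a.e. Lemma~\ref{lem:equality-phase} with $r=1$ and $\chi\equiv1$ then gives $\lambda\phi\circ T_q=\phi$ for $\phi=h/|h|$. Since $\phi$ is $BV$, it is continuous off a countable set. On the full cylinder $C$ we have $\phi(\psi_C(x))=\lambda^{r}\phi(x)$, and $C$ may be taken inside any interval, arbitrarily small. This forces the oscillation of $\phi$ over $(0,1)$ to be at most the oscillation of $\phi$ on small intervals around a continuity point, which is $0$. So $\phi$ is constant and $\lambda=1$.

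\emph{Kernel of $I-P^2$.} The same argument applied to $P^2$, with $r$ replaced by $2r$, gives $\ker(I-P^2)=\C\rho_q$. Uniqueness of $\rho_q$ follows as well.
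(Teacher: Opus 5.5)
Your proof is correct, but it is organized differently from the paper's in three places.

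\emph{Existence.} You construct $\rho_q$ directly as an $L^1$ limit point of the Ces\`aro averages $\frac1n\sum_{k<n}P^k\one$. This uses the iterated bound \eqref{eq:peripheral-iterate} and compactness of $BV\hookrightarrow L^1$; you should also cite lower semicontinuity of variation to place the limit in $\mathsf B$. The paper instead verifies that $T_q$ is partially filling and satisfies the expansion and second-derivative bounds, and then invokes \cite[Theorem~C]{CHM2014}. Your route is self-contained given Theorem~\ref{thm:LY}; the paper's is shorter but rests on an external theorem.

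\emph{Simplicity.} You classify nonnegative invariant $BV$ functions by subtracting $t\rho_q$ with $t=\operatorname*{ess\,inf}(g/\rho_q)$ and applying the dichotomy ``zero or bounded below a.e.'' The paper instead shows that a real mean-zero fixed vector $f$ vanishes: $f^+$ and $f^-$ would both be invariant and, by the full-cylinder argument, both positive a.e. Both arguments rest on the same full-cylinder positivity. Yours yields uniqueness of the normalized density at once, and together with the phase step at $\lambda=1$ it gives $\ker(I-P)=\C\rho_q$ on $L^1$ in one pass.

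The representative issue you flag does not arise. The dichotomy is an a.e.\ statement, and $g-t\rho_q\ge\varepsilon$ a.e.\ would give $g/\rho_q\ge t+\varepsilon/\norm{\rho_q}_\infty$ a.e., contradicting the choice of $t$. No trace argument is needed.

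You should, however, justify that $\phi=h/|h|$ is $BV$. Since $|h|=t\rho_q$ and $\operatorname*{ess\,inf}\rho_q>0$, we get $\rho_q^{-1}\in BV$, and the product rule applies; this is the paper's \eqref{eq:rho-reciprocal-BV}.

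\emph{The kernel of $I-P^2$.} You rerun the argument for $P^2$ using the rank-$2r$ full cylinder $\psi_C(C)\subset C$, whose inverse branch is $\psi_C\circ\psi_C$. This does work, both for the positivity dichotomy and for the phase step with $r=2$ in Lemma~\ref{lem:equality-phase}. The paper's shortcut is shorter: $h+Ph$ and $h-Ph$ are eigenvectors of $P$ for $1$ and $-1$, so the already-proved peripheral statement finishes immediately.
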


\begin{proof}
After assigning arbitrary values to $T_q$ at $0$ and at the countable
branch endpoints, view it as a map of $[0,1]$ into itself.  These
assignments do not affect its Perron--Frobenius operator.  Put
$m=\lfloor q\rfloor$ and $a=q-m=\{q\}_1$.  Modulo the branch endpoints,
the intervals
\begin{equation*}
 I_m=\left(\frac q{m+1},1\right),\qquad
 I_n=\left(\frac q{n+1},\frac qn\right),\quad n\ge m+1,
\end{equation*}
are pairwise disjoint and cover $(0,1)$.  On $I_n$ one has
\begin{equation*}
 T_q(y)=\frac qy-n,
\end{equation*}
so every branch is strictly decreasing and extends to a $C^2$ map on
the closure of its interval, with nonvanishing derivative in the
interior.  Moreover,
\begin{equation*}
 T_q(I_m)=(a,1),\qquad T_q(I_n)=(0,1)\quad(n\ge m+1).
\end{equation*}
Thus every branch image has length at least
\begin{equation*}
 \delta_q=1-a>0,
\end{equation*}
and $T_q$ is partially filling in the sense of
\cite[Definition~3.2]{CHM2014}.  Finally, on every branch,
\begin{equation*}
 |T_q'(y)|=\frac q{y^2}\ge q=1+(q-1),
 \qquad
 \frac{|T_q''(y)|}{|T_q'(y)|^2}=\frac{2y}{q}\le\frac2q.
\end{equation*}
Hence the uniform-expansion hypothesis of
\cite[Theorem~C]{CHM2014} holds for the first iterate with
$\varepsilon=q-1$, and its second-derivative hypothesis holds with
$M=2/q$.  All hypotheses of that theorem are satisfied, so it supplies
a normalized nonnegative invariant density
$\rho_q\in BV((0,1))$.

Choose a continuity point $x_0$ of its precise representative such that
$\rho_q(x_0)>0$.  On some open interval $J\ni x_0$, we have
$\rho_q\ge c>0$.  By Lemma~\ref{lem:full-cylinder}, there is a full
cylinder $C\subset J$ of rank $r$.  If
$\psi:(0,1)\to C$ is its inverse branch, then $\psi$ extends $C^1$
to $[0,1]$ with nonvanishing derivative, and invariance gives
\begin{equation}\label{eq:rho-spread}
 \rho_q(x)=P^r\rho_q(x)
 \ge |\psi'(x)|\rho_q(\psi(x))
 \ge c\inf_{(0,1)}|\psi'|>0
\end{equation}
for almost every $x$.  This proves the lower bound in
\eqref{eq:rho-lower}.

We next prove simplicity.  Let $f\in BV((0,1))$ be real, $Pf=f$, and
$\int_0^1f(x)\,\dd x=0$.  If $f\ne0$, then both $f^+$ and $f^-$ are
nonzero.  Positivity and preservation of the integral give
\begin{equation*}
 |f|=|Pf|\le P|f|,
 \qquad
 \int_0^1P|f|(x)\,\dd x=\int_0^1|f(x)|\,\dd x.
\end{equation*}
Thus $P|f|=|f|$ almost everywhere, and hence
\begin{equation*}
 Pf^+=f^+,\qquad Pf^-=f^-.
\end{equation*}
Applying the full-cylinder argument \eqref{eq:rho-spread} separately to
$f^+$ and $f^-$ makes each strictly positive almost everywhere.  This
contradicts $f^+f^-=0$.  Thus every real fixed vector of mean zero
vanishes.  If $g\in BV((0,1))$ is an arbitrary real fixed vector, then,
since $\int_0^1\rho_q\,\dd x=1$,
\begin{equation*}
 g-\left(\int_0^1g(x)\,\dd x\right)\rho_q\in\ker(I-P),
 \qquad
 \int_0^1\left[
 g-\left(\int_0^1g(x)\,\dd x\right)\rho_q
 \right]\,\dd x=0.
\end{equation*}
The preceding argument therefore gives
\begin{equation*}
 g=\left(\int_0^1g(x)\,\dd x\right)\rho_q.
\end{equation*}
Separating real and imaginary parts now yields
\begin{equation*}
 \ker\big(I-P:BV((0,1))\to BV((0,1))\big)=\C\rho_q.
\end{equation*}
Lemma~\ref{lem:peripheral-BV} extends this identity to
$L^1((0,1))$.

It remains to exclude nontrivial peripheral eigenvalues.  Suppose that
$Ph=\lambda h\ne0$ in $L^1((0,1))$, where $|\lambda|=1$.  By
Lemma~\ref{lem:peripheral-BV}, $h\in BV((0,1))$, and equality in the domination
$|Ph|\le P|h|$, together with preservation of the integral, gives
$|h|=P|h|=c\rho_q$ for some $c>0$.  Put
\begin{equation*}
 \kappa=\operatorname*{ess\,inf}\rho_q>0.
\end{equation*}
After changing the precise representative on a null set, $\rho_q\ge
\kappa$, and the Lipschitz composition rule gives
\begin{equation}\label{eq:rho-reciprocal-BV}
 \rho_q^{-1}\in BV((0,1)),
 \qquad \Var(\rho_q^{-1})\le\kappa^{-2}\Var(\rho_q).
\end{equation}
Consequently,
\begin{equation*}
 \phi=\frac{h}{c\rho_q}\in BV((0,1)),
 \qquad |\phi|=1\quad\mathrm{a.e.}
\end{equation*}
Apply Lemma~\ref{lem:equality-phase} with $r=1$ and $\chi=1$.  It gives
\begin{equation}\label{eq:P-phase-cocycle}
 \phi(T_qy)=\overline\lambda\,\phi(y)
 \quad\mathrm{a.e.}\ y\in(0,1).
\end{equation}
Choose a continuity point $x_0$ of $\phi$.  Given $\varepsilon>0$,
take an interval $J\ni x_0$ on which
$|\phi-\phi(x_0)|<\varepsilon$, and choose a full cylinder
$C\subset J$ of rank $r$, with inverse branch $\psi:(0,1)\to C$.
Iteration of \eqref{eq:P-phase-cocycle} gives, for almost every $x,z$,
\begin{equation*}
 |\phi(x)-\phi(z)|
 =|\phi(\psi(x))-\phi(\psi(z))|<2\varepsilon.
\end{equation*}
Since $\varepsilon$ is arbitrary, $\phi$ is constant almost
everywhere.  The eigenvalue equation then forces $\lambda=1$.  This
proves the peripheral assertion and the uniqueness of the normalized
density.

Finally, if $P^2h=h$, then $h+Ph$ and $h-Ph$ are eigenvectors of
$P$ for $1$ and $-1$, respectively.  The peripheral assertion makes
the second vector zero and the first a multiple of $\rho_q$; hence
$h\in\C\rho_q$.  This proves the remaining identity in
\eqref{eq:rho-lower}.
\end{proof}

\medskip
\noindent
\textit{Explicit form of the invariant density.}
When $q=N\in\N$, $N\ge2$, all branches of $T_N$ are full and the
normalized density is elementary:
\begin{equation*}
 \rho_N(x)
 =\frac{1}{\log(1+N^{-1})}\frac{1}{N+x},
 \qquad 0<x<1.
\end{equation*}
This is the formula recorded in
\cite[Remark~5.5(b)]{CHM2014}.  Indeed, only the branches $n\ge N$
contribute, and
\begin{equation*}
 \frac{N}{(n+x)^2}\frac{1}{N+N/(n+x)}
 =\frac{1}{n+x}-\frac{1}{n+x+1}.
\end{equation*}
The branch sum therefore telescopes to $(N+x)^{-1}$, while
\begin{equation*}
 \int_0^1\frac{\dd x}{N+x}=\log(1+N^{-1}).
\end{equation*}
In particular,
\begin{equation*}
 \operatorname*{ess\,inf}_{(0,1)}\rho_N
 =\frac{1}{(N+1)\log(1+N^{-1})},\qquad
 \norm{\rho_N}_\infty
 =\frac{1}{N\log(1+N^{-1})},
\end{equation*}
and
\begin{equation*}
 \Var(\rho_N)
 =\frac{1}{N(N+1)\log(1+N^{-1})}.
\end{equation*}
For $N=1$, the same expression is the classical Gauss density
$((1+x)\log2)^{-1}$; see \cite[Section~3.9]{HM2020}.

For arbitrary real $q>1$, an elementary closed formula is generally
unavailable \cite[Section~5.4]{CHM2014}, but there is an exact convergent
representation.  Write
\begin{equation*}
 q=m+a,\qquad m=\lfloor q\rfloor,\qquad 0\le a<1,
\end{equation*}
and put
\begin{equation*}
 \ell_q=\log(1+q^{-1}),\qquad
 \rho_q^{(0)}(x)=\frac{1}{\ell_q(q+x)}.
\end{equation*}
A direct branch calculation gives
\begin{equation*}
 (P\rho_q^{(0)})(x)
 =\frac{1}{\ell_q}
 \begin{cases}
  (m+1+x)^{-1},&0<x<a,\\
  (m+x)^{-1},&a<x<1.
 \end{cases}
\end{equation*}
Thus the elementary trial density is fixed precisely when $a=0$.
Set
\begin{equation*}
 \Delta_q=P\rho_q^{(0)}-\rho_q^{(0)}.
\end{equation*}
Both $\rho_q^{(0)}$ and $P\rho_q^{(0)}$ have integral one, so
$\Delta_q$ belongs to
\begin{equation*}
 BV_0((0,1))
 :=
 \left\{u\in BV((0,1)):\int_0^1u(x)\,\dd x=0\right\}.
\end{equation*}
Since $P\rho_q=\rho_q$, $\int_0^1\rho_q\,\dd x=1$, and $P$ preserves
the integral, both summands in
\begin{equation*}
 BV((0,1))=\C\rho_q\oplus BV_0((0,1))
\end{equation*}
are $P$-invariant.  Thus this is a $P$-reducing decomposition.
The $L^1$ contraction excludes eigenvalues of modulus greater than one,
and Lemma~\ref{lem:positive-spectrum} excludes every peripheral
eigenvalue other than $1$.  Moreover, $1$ has no Jordan chain:
integration of $(I-P)u=c\rho_q$ gives $0=c$.  Quasi-compactness
therefore gives
\begin{equation*}
 r\big(P|_{BV_0((0,1))}\big)<1.
\end{equation*}
Consequently,
\begin{equation*}
 \rho_q
 =\rho_q^{(0)}+\sum_{j=0}^{\infty}P^j\Delta_q
 =\lim_{N\to\infty}P^N\rho_q^{(0)}
 \qquad\mathrm{in}\ BV((0,1)),
\end{equation*}
with exponential convergence.  Equivalently, for almost every $x$,
\begin{equation*}
 \rho_q(x)
 =\lim_{N\to\infty}
 \sum_{T_q^Ny=x}
 \frac{\rho_q^{(0)}(y)}{|(T_q^N)'(y)|}.
\end{equation*}
The failure of the simple rational-function formula for nonintegral
$q$ is exactly the effect of the truncated branch $n=m$.

\begin{proposition}
\label{prop:twisted-fixed}
Let $q>1$ and $|\lambda|=1$.  Then
\begin{equation}\label{eq:twisted-fixed}
 \ker(Q_{q,\zeta,\eta}-\lambda I:
 L^1((0,1))\to L^1((0,1)))
 =
 \begin{cases}
  \C\rho_q,&(\zeta,\eta,\lambda)=(1,1,1),\\
  \{0\},&\mathrm{otherwise}.
 \end{cases}
\end{equation}
\end{proposition}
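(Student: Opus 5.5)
The plan is to show that a nonzero peripheral eigenvector of $Q=Q_{q,\zeta,\eta}$ forces $\zeta=\eta=1$, and then to reduce to the positive operator $P=\cL_{1,q}$, where Lemma~\ref{lem:positive-spectrum} applies. The inclusion ``$\supseteq$'' is immediate, since $Q_{q,1,1}\rho_q=P^2\rho_q=\rho_q$.

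\textbf{Reduction to a phase cocycle.} Suppose that $Qh=\lambda h$ with $h\ne0$ in $L^1((0,1))$ and $|\lambda|=1$. Lemma~\ref{lem:peripheral-BV} gives $h\in BV((0,1))$. From $|h|=|Qh|\le P^2|h|$ and the fact that $P$ preserves integrals, we get $P^2|h|=|h|$. Lemma~\ref{lem:positive-spectrum} then gives $|h|=c\rho_q$ with $c>0$, so $|h|>0$ almost everywhere.

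The operator $Q$ is $L_{\chi,2}$ in the notation of Lemma~\ref{lem:equality-phase}, with weight
\begin{equation*}
 \chi(y)=\eta^{d_q(y)}\zeta^{d_q(T_qy)}.
\end{equation*}
Hence $\phi=h/|h|$ satisfies
\begin{equation*}
 \phi(y)=\lambda\,\overline{\chi(y)}\,\phi(T_q^2y)
 \quad\mathrm{a.e.}
\end{equation*}
As in \eqref{eq:rho-reciprocal-BV}, $\phi=h/(c\rho_q)\in BV((0,1))$. In particular $\phi$ has one-sided limits at every point, including $\phi(0+)$.

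\textbf{Killing the phases.} Restrict to the full branches $n\ge m+1$ and write $\psi_{n_1n_2}=\phi_{n_1}\circ\phi_{n_2}$. Fix a full-measure set $E$ of targets $x$ such that the cocycle identity holds at $\psi_{n_1n_2}(x)$ for all of the countably many pairs $(n_1,n_2)$. This is possible because each $\psi_{n_1n_2}$ is nonsingular, as in the proof of Lemma~\ref{lem:equality-phase}. Choose also $x\in E$ to be a point at which $\phi$ is given by its precise representative. For such $x$,
\begin{equation*}
 \phi(\psi_{n_1n_2}(x))=\lambda\,\eta^{-n_1}\zeta^{-n_2}\phi(x).
\end{equation*}

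First fix $n_2$ and let $n_1\to\infty$. Then $\psi_{n_1n_2}(x)\to0^+$, so the left side tends to $\phi(0+)$. Consequently $\eta^{-n_1}$ converges, and comparing consecutive values of $n_1$ gives $\eta=1$.

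Next fix $n_1$ and let $n_2\to\infty$. Then $\phi_{n_2}(x)\to0^+$, and since $\phi_{n_1}$ is decreasing, $\psi_{n_1n_2}(x)\to (q/n_1)^-$. The left side therefore converges to the one-sided limit $\phi((q/n_1)-)$, which forces $\zeta^{-n_2}$ to converge, and hence $\zeta=1$.

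This step is where I expect the main care to be needed. The argument must use genuine one-sided $BV$ limits rather than pointwise values, and it must take the target $x$ inside a single common full-measure set. The full branches $n\ge m+1$ are exactly what make $\phi_{n_2}(x)$ and $\psi_{n_1n_2}(x)$ defined for every $x\in(0,1)$.

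\textbf{Untwisted case.} Now $Q=P^2$. Choose $\mu$ with $\mu^2=\lambda$. Then $h=\tfrac12(h+\mu^{-1}Ph)+\tfrac12(h-\mu^{-1}Ph)$, and the two summands are eigenvectors of $P$ for the eigenvalues $\mu$ and $-\mu$ respectively. By Lemma~\ref{lem:positive-spectrum}, $\sigma_{\mathrm p}(P)\cap\T=\{1\}$. Since $h\ne0$, one of the summands is nonzero, so $\mu=\pm1$ and hence $\lambda=1$. Then $h\in\ker(I-P^2)=\C\rho_q$, which gives the first case of \eqref{eq:twisted-fixed}. In every other case of $(\zeta,\eta,\lambda)$, no nonzero $h$ exists, so the kernel is $\{0\}$.
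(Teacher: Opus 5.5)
Your proof is correct. Its skeleton is the paper's: $BV$ regularity, then $|h|=c\rho_q$, then the rank-two cocycle from Lemma~\ref{lem:equality-phase} restricted to full branches, and finally one-sided $BV$ limits that force unimodular powers to converge. The difference is in how you kill the phases and how you reach $\lambda=1$.

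The paper fixes one full branch $\psi_{n,m}$ and iterates it to its attracting fixed point $p_{n,m}$. The one-sided limit of $\phi$ at $p_{n,m}$ gives $\lambda=\eta^n\zeta^m$ for all large $n,m$. Comparing consecutive indices then yields $\eta=\zeta=1$ and $\lambda=1$ in one stroke. The price is tracking the full-measure set $E_\infty=\bigcap_k\psi_{n,m}^{-k}(E)$ along the whole forward orbit of the contraction.

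You apply the cocycle only once per branch and instead send the digits to infinity, pushing $\psi_{n_1n_2}(x)$ to $0^+$ or to $(q/n_1)^-$. This avoids iteration and fixed points. However, $\lambda$ enters as a digit-independent factor, so this mechanism cannot detect $\lambda$. You therefore need the extra splitting $h=\tfrac12(h+\mu^{-1}Ph)+\tfrac12(h-\mu^{-1}Ph)$, together with $\sigma_{\mathrm p}(P;L^1((0,1)))\cap\T=\{1\}$ from Lemma~\ref{lem:positive-spectrum}, to force $\lambda=1$. That step is valid; it is a mild generalization of the $h\pm Ph$ trick the paper uses only at $\lambda=1$.

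One technical phrasing issue: what matters is not that $x$ is a point of the precise representative. Either fix a single good representative of $\phi$ from the start, or put into $E$ the condition that every point $\psi_{n_1n_2}(x)$ avoids the null set where your representative differs from a good one. Then the a.e.\ cocycle and the pointwise one-sided limits refer to the same values.
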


\begin{proof}
Suppose that $Qh=\lambda h\ne0$.  By
Lemma~\ref{lem:peripheral-BV}, $h\in BV((0,1))$.  Pointwise domination and
preservation of the integral by $P$ give
\begin{equation*}
 |h|=|Qh|\le P^2|h|,
 \qquad \int_0^1P^2|h|(x)\,\dd x=\int_0^1|h(x)|\,\dd x.
\end{equation*}
The inequality is therefore an equality almost everywhere.  By
Lemma~\ref{lem:positive-spectrum},
\begin{equation*}
 |h|=c\rho_q
\end{equation*}
for some $c>0$.  In view of \eqref{eq:rho-lower},
\begin{equation*}
 \phi=\frac{h}{c\rho_q}\in BV((0,1)),
 \qquad |\phi|=1\quad\mathrm{a.e.}
\end{equation*}
Indeed, \eqref{eq:rho-reciprocal-BV} and the $BV$ product rule give the
explicit bound
\begin{equation*}
 \Var(\phi)
 \le \frac1c\left(\kappa^{-1}\Var(h)
 +\norm{h}_\infty\kappa^{-2}\Var(\rho_q)\right),
 \qquad \kappa=\operatorname*{ess\,inf}\rho_q.
\end{equation*}

The identity
\begin{equation*}
 \cL_{\omega,q}u=P(\omega^{d_q}u)
\end{equation*}
shows that
\begin{equation*}
 Qh(x)=\sum_{T_q^2y=x}
 \frac{\eta^{d_q(y)}\zeta^{d_q(T_qy)}}{|(T_q^2)'(y)|}h(y).
\end{equation*}
Apply Lemma~\ref{lem:equality-phase} with $r=2$ and
\begin{equation*}
 \chi(y)=\eta^{d_q(y)}\zeta^{d_q(T_qy)}.
\end{equation*}
We obtain
\begin{equation}\label{eq:phase-cocycle}
 \lambda\phi(T_q^2y)
 =\eta^{d_q(y)}\zeta^{d_q(T_qy)}\phi(y)
 \quad\mathrm{a.e.}\ y\in(0,1).
\end{equation}

For all sufficiently large integers $n,m$, the maps
\begin{equation*}
 \varphi_n(x)=\frac q{n+x},\qquad
 \psi_{n,m}=\varphi_n\circ\varphi_m
\end{equation*}
are full inverse branches.  If $y=\psi_{n,m}(x)$, then
\begin{equation*}
 d_q(y)=n,\qquad d_q(T_qy)=m,
\end{equation*}
and \eqref{eq:phase-cocycle} becomes
\begin{equation}\label{eq:phase-cylinder}
 \lambda\phi(x)=\eta^n\zeta^m\phi(\psi_{n,m}(x))
 \quad\mathrm{for\ a.e.}\ x.
\end{equation}
The increasing map $\psi_{n,m}$ is a strict contraction of $[0,1]$
into a compact subinterval of $(0,1)$.  Let $p_{n,m}$ be its fixed
point.  Let $E$ be the full-measure set on which
\eqref{eq:phase-cylinder} holds and $|\phi|=1$, and set
\begin{equation*}
 E_\infty=\bigcap_{k\ge0}\psi_{n,m}^{-k}(E).
\end{equation*}
Every iterate $\psi_{n,m}^k$ is $C^1$ and bi-Lipschitz onto its image,
so $E_\infty$ has full measure.  For $x\in E_\infty$, every point
$\psi_{n,m}^k(x)$ lies in $E$, so its $\phi$-value has modulus one
and we may iterate \eqref{eq:phase-cylinder} to obtain
\begin{equation*}
 \phi(x)=(\lambda^{-1}\eta^n\zeta^m)^k
 \phi(\psi_{n,m}^k(x)).
\end{equation*}
Choose $x>p_{n,m}$ in this full-measure set.  Then
$\psi_{n,m}^k(x)\downarrow p_{n,m}$.  The right-hand limit of the
$BV$ function $\phi$ exists there; because every term approaching it
has modulus one, the limit also has modulus one.  Hence the
sequence $(\lambda^{-1}\eta^n\zeta^m)^k$ converges to a nonzero limit.  A sequence
of powers of a unimodular number has a nonzero limit only when that number
is one.  Thus
\begin{equation*}
 \lambda=\eta^n\zeta^m
\end{equation*}
for all sufficiently large $n,m$.  Comparing consecutive values of
$n$, and then of $m$, gives $\eta=\zeta=1$, and then $\lambda=1$.
Lemma~\ref{lem:positive-spectrum} gives $h\in\C\rho_q$.  This proves
\eqref{eq:twisted-fixed}.
\end{proof}

We can now remove the abstract compatibility space from the Fredholm
construction.  Recall the $P$-invariant mean-zero space
$BV_0((0,1))$ defined above.
If $(\zeta,\eta)\ne(1,1)$,
Propositions~\ref{prop:Fredholm} and~\ref{prop:twisted-fixed} show that
$I-Q$ is boundedly invertible on
$BV$.  In the untwisted case, the same propositions show that
\begin{equation}\label{eq:range-P2}
 \Ran(I-P^2)=BV_0((0,1)).
\end{equation}
Indeed, the range is closed and has codimension one, while preservation of
the integral gives its inclusion in the codimension-one space on the
right.  If $(I-P^2)u=g\in BV_0$, then
\begin{equation*}
 u_0=u-\left(\int_0^1u(x)\,\dd x\right)\rho_q\in BV_0,
 \qquad (I-P^2)u_0=g.
\end{equation*}
Moreover, $\C\rho_q\cap BV_0=\{0\}$.  Hence the restriction of
$I-P^2$ to $BV_0$ is a bounded bijection of $BV_0$; denote its
inverse by $G_q$.

For $v\in BV((1,q))$, the untwisted forcing term has mean zero.  In fact,
\begin{equation*}
 A_{q,1,1}v=-S_1v+P(S_1J_qv),
\end{equation*}
and periodization, inversion, and $P$ preserve the relevant integrals:
\begin{align*}
 \int_0^1(S_1v)(x)\,\dd x&=\int_1^qv(x)\,\dd x,
 &\int_0^1(S_1J_qv)(x)\,\dd x
 &=\int_1^q(J_qv)(x)\,\dd x=\int_1^qv(x)\,\dd x.
\end{align*}
Thus
\begin{equation}\label{eq:A-mean-zero}
 \int_0^1(A_{q,1,1}v)(x)\,\dd x=0.
\end{equation}

\begin{proposition}
\label{prop:normal-form}
Let $q>1$.  If $(\zeta,\eta)\ne(1,1)$, define
\begin{equation*}
 \mathsf E_{q,\zeta,\eta}v
 =\Phi\big((I-Q_{q,\zeta,\eta})^{-1}A_{q,\zeta,\eta}v,v\big).
\end{equation*}
Then the restriction map
\begin{equation*}
 \operatorname{Res}_{(1,q)}:
 \cA^{BV}_{q,\zeta,\eta}\longrightarrow BV((1,q)),
 \qquad
 \operatorname{Res}_{(1,q)}F=F|_{(1,q)},
\end{equation*}
is a bijection with inverse $\mathsf E_{q,\zeta,\eta}$.  In particular,
\begin{equation*}
 \cA^{BV}_{q,\zeta,\eta}
 =\mathsf E_{q,\zeta,\eta}\big(BV((1,q))\big).
\end{equation*}
If $(\zeta,\eta)=(1,1)$, put
\begin{equation*}
 \mathsf E_{q,1,1}v=\Phi(G_qA_{q,1,1}v,v),
 \qquad \Psi_q=\Phi(\rho_q,0).
\end{equation*}
Then
\begin{equation}\label{eq:normal-form-untwisted}
 \cA^{BV}_{q,1,1}
 =\C\Psi_q\oplus \mathsf E_{q,1,1}\big(BV((1,q))\big).
\end{equation}
All extension maps above are bounded from $BV((1,q))$ to $BV(\R_+)$.
\end{proposition}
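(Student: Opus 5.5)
The plan is to transport everything through the Banach isomorphism $\Phi$ of Proposition~\ref{prop:Banach-graph} and then work on $BV$ using the Fredholm theory already established.

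\emph{Step 1: the $BV$ core in graph coordinates.} First I identify $\cA^{BV}_{q,\zeta,\eta}$ with
\begin{equation*}
\{(F_0,v)\in\ker D_{q,\zeta,\eta}: F_0\in BV((0,1)),\ v\in BV((1,q))\}.
\end{equation*}
If $F\in\cA_{q,\zeta,\eta}\cap BV(\R_+)$, its restrictions $F_0$ and $v$ are $BV$ on their intervals. Conversely, if $F_0$ and $v$ are $BV$, Lemma~\ref{lem:tail-BV} applied to $h=F_0+v$ shows that $\Phi(F_0,v)=h+B_{\eta,q}h$ lies in $BV(\R_+)$. The same lemma gives
\begin{equation*}
\norm{\Phi(F_0,v)}_{BV(\R_+)}\le C\big(\norm{F_0}_{BV}+\norm v_{BV}\big),
\end{equation*}
which will produce the final boundedness assertion.

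\emph{Step 2: the twisted case $(\zeta,\eta)\ne(1,1)$.} By Propositions~\ref{prop:Fredholm} and~\ref{prop:twisted-fixed} with $\lambda=1$, the operator $I-Q$ is Fredholm of index zero on $\mathsf B$. Its kernel is trivial even in $L^1$, hence in $BV$, so $I-Q$ is boundedly invertible on $\mathsf B$.

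For $v\in BV((1,q))$, Lemma~\ref{lem:A-BV} puts $Av$ in $\mathsf B$. Then $F_0=(I-Q)^{-1}Av\in\mathsf B$ solves \eqref{eq:interior-equation}, so $\mathsf E v\in\cA^{BV}$ and $\operatorname{Res}_{(1,q)}\mathsf E v=v$, because $\Phi$ keeps $v$ on $(1,q)$ and the other pieces have disjoint supports.

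Conversely, let $F\in\cA^{BV}$ and take $(F_0,v)=\Phi^{-1}F$. Then $(I-Q)F_0=Av$ with $F_0\in\mathsf B$. The kernel of $I-Q$ is trivial in $L^1$, so $F_0=(I-Q)^{-1}Av$ and $F=\mathsf E(F|_{(1,q)})$. Hence $\operatorname{Res}_{(1,q)}$ is a bijection with inverse $\mathsf E$. Boundedness of $\mathsf E$ follows by composing the bounded maps $A$, $(I-Q)^{-1}$ and $\Phi$ from Step~1.

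\emph{Step 3: the untwisted case.} Now $Q=P^2$. By \eqref{eq:A-mean-zero}, $Av\in BV_0$, so $G_qAv\in BV_0$ is defined and solves \eqref{eq:interior-equation}. Hence $\mathsf E_{q,1,1}v\in\cA^{BV}$ with middle restriction $v$, and it depends boundedly on $v$ because $G_q$ is bounded on $BV_0$.

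The element $\Psi_q=\Phi(\rho_q,0)$ lies in the core because $\rho_q\in BV$ by Lemma~\ref{lem:positive-spectrum} and $(I-P^2)\rho_q=0=A0$. It is nonzero since its restriction to $(0,1)$ is $\rho_q$, and its middle restriction is zero.

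For an arbitrary $F\in\cA^{BV}$ with coordinates $(F_0,v)$, the difference $F_0-G_qAv$ lies in $\ker(I-P^2)$ on $L^1$. By \eqref{eq:rho-lower} this kernel is $\C\rho_q$, so $F=c\Psi_q+\mathsf E_{q,1,1}v$ by linearity of $\Phi$.

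The sum is direct. Comparing middle restrictions of $c\Psi_q=\mathsf E_{q,1,1}v$ forces $v=0$, and then $c\rho_q=0$, so $c=0$. This proves \eqref{eq:normal-form-untwisted}.

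\emph{Main obstacle.} The one delicate point is the kernel identification: it must be stated in $L^1$ rather than only in $BV$, since membership in $\cA^{BV}$ constrains $F_0$ only a posteriori. This is already supplied by Proposition~\ref{prop:twisted-fixed} and Lemma~\ref{lem:positive-spectrum} via Lemma~\ref{lem:peripheral-BV}. The remaining steps are bookkeeping with $\Phi$ and the tail lemma.
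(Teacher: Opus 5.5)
Your proposal is correct and follows essentially the same argument as the paper. It transports everything through $\Phi$ and inverts $I-Q$ on $BV$ via Fredholm index zero plus Proposition~\ref{prop:twisted-fixed}. In the untwisted case it uses $G_q$ on $BV_0((0,1))$ together with \eqref{eq:A-mean-zero}, and it gets boundedness from Lemmas~\ref{lem:A-BV} and~\ref{lem:tail-BV}, with your version simply spelling out the paper's terse bookkeeping.

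One small correction to your closing remark: $F\in BV(\R_+)$ already gives $F_0=F|_{(0,1)}\in BV((0,1))$, so the kernel of $I-Q$ on $BV$ suffices here. The $L^1$ kernel identification is harmless but not actually needed; it only becomes essential for the $L^1$ normal form in Section~\ref{sec:L1-normal}.
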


\begin{proof}
Lemmas~\ref{lem:A-BV} and~\ref{lem:tail-BV}, together with the bounded
inverses just constructed, prove boundedness.  Equations
\eqref{eq:range-P2}--\eqref{eq:A-mean-zero} show that every middle datum is
admissible in the untwisted case; invertibility of $I-Q$ gives the same
conclusion in the twisted case.  Proposition~\ref{prop:graph} then shows
that all displayed functions are pre-annihilators.

Conversely, let $F\in\cA^{BV}_{q,\zeta,\eta}$, and write
$F=\Phi(F_0,v)$.  If the phases are nontrivial, the graph equation
forces $F_0=(I-Q)^{-1}Av$.  If they are trivial, it forces
$F_0-G_qAv\in\ker(I-P^2)=\C\rho_q$.  This proves both formulas and their
uniqueness assertions.  The sum in \eqref{eq:normal-form-untwisted} is
direct because $\Psi_q$ has zero middle restriction, whereas
$(\mathsf E_{q,1,1}v)|_{(1,q)}=v$.
\end{proof}

\medskip
\noindent
\textit{The zero-middle generator on $\R_+$.}
The density $\rho_q$ itself is defined only on the dynamical interval
$(0,1)$.  The corresponding global pre-annihilator is
$\Psi_q=\Phi(\rho_q,0)$.  The zero-middle construction already occurs
for the full hyperbola in \cite[Remark~8.4]{CHM2014}, and its
positive-branch graph form is implicit in
\cite[Proposition~8.5]{CHM2014}.  In the untwisted rescaled
positive-branch setting of \cite[(4.9)]{GM2026}, the corresponding
function is $\psi_0=\varrho_0-T_\gamma^*R_2^*\varrho_0$.  Under
\begin{equation*}
 x=bt,\qquad q=\frac{\gamma}{b},\qquad
 \rho_q(t)=b\varrho_0(bt),
\end{equation*}
one has
\begin{equation*}
 b^{-1}\sigma_{\gamma,b}(bt)=T_q(t),
\end{equation*}
and the adjoint tail relation gives
\begin{equation*}
 \Psi_q(t)=b\psi_0(bt)
 \qquad\mathrm{a.e.}\ t\in\R_+.
\end{equation*}
This comparison concerns the untwisted space
$\mathcal N_{\gamma,b}^{\perp}$ of \cite{GM2026}, rather than the
shifted space $\mathcal F_{\gamma,b}^{\perp}$ used in
Corollary~\ref{cor:GM-range}.  In the present arbitrary-shift
normal-form dichotomy, $\Psi_q$ is precisely the one-dimensional
ambiguity in the untwisted case.  Thus the zero-middle structure
predates the present paper; here it is incorporated into the
arbitrary-shift normal form and, when $q$ is integral, evaluated in
closed form.  In the present normalization,
Formula~\eqref{eq:B-explicit} gives
\begin{equation*}
 \Psi_q(t)
 =
 \begin{cases}
  \rho_q(t),&0<t<1,\\
  0,&1<t<q,\\
  \displaystyle
  -\sum_{\substack{j\ge1\\ qt/(q+jt)<1}}
  \frac{q^2}{(q+jt)^2}
  \rho_q\!\left(\frac{qt}{q+jt}\right),&t>q.
 \end{cases}
\end{equation*}
Together with the convergent $BV$ representation of $\rho_q$ above,
this gives an explicit convergent representation of $\Psi_q$ for every
real $q>1$.

For $q=N\in\N$, inserting the explicit invariant density from
\cite[Remark~5.5(b)]{CHM2014} into this generator makes the tail
elementary.  If $t>N$, only $j\ge N$ contribute, and, with
$\ell_N=\log(1+N^{-1})$,
\begin{equation*}
 \frac{N^2}{(N+jt)^2}
 \rho_N\!\left(\frac{Nt}{N+jt}\right)
 =
 \frac{N}{\ell_N(N+jt)(N+(j+1)t)}.
\end{equation*}
Consequently,
\begin{align*}
 \sum_{j=N}^{\infty}
 \frac{N}{(N+jt)(N+(j+1)t)}
 &=
 \frac{N}{t}\sum_{j=N}^{\infty}
 \left(\frac{1}{N+jt}-\frac{1}{N+(j+1)t}\right)\\
 &=\frac{1}{t(1+t)}.
\end{align*}
Hence
\begin{equation*}
 \Psi_N(t)
 =\frac{1}{\log(1+N^{-1})}
 \begin{cases}
  (N+t)^{-1},&0<t<1,\\
  0,&1<t<N,\\
  -[t(1+t)]^{-1},&t>N.
 \end{cases}
\end{equation*}
Although $q=1$ lies outside our standing supercritical assumption,
setting $N=1$ recovers the classical zero-middle Gauss generator.  Since
\begin{equation*}
 \int_N^\infty\frac{\dd t}{t(1+t)}
 =\log(1+N^{-1}),
\end{equation*}
the two nonzero pieces each have $L^1$ norm one, and therefore
\begin{equation*}
 \norm{\Psi_N}_{L^1(\R_+)}=2.
\end{equation*}

As a direct consequence, Proposition~\ref{prop:normal-form} removes the
special unresolved range in \cite[Remark~1.6]{GM2026}.  Let
\begin{equation*}
 \mathcal F_{\gamma,b}^{\perp}
 =\left\{f\in L^1(\R_+):
 \begin{array}{l}
 \displaystyle\int_{\R_+}
 f(x)e^{2\pi i(m+1/b)x}\,\dd x=0,\quad m\in\Z,\\
 \displaystyle\int_{\R_+}
 f(x)e^{2\pi in\gamma/x}\,\dd x=0,\quad n\in\Z
 \end{array}\right\}.
\end{equation*}

\begin{corollary}\label{cor:GM-range}
For every integer $b\ge2$ and every $\gamma>1$,
\begin{equation*}
 \dim\mathcal F_{\gamma,b}^{\perp}=\infty.
\end{equation*}
In particular, this holds for $1<\gamma\le b$.
\end{corollary}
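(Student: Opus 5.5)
The plan is to identify $\mathcal F_{\gamma,b}^{\perp}$ with a shifted-cross pre-annihilator $\cA_{\alpha,\gamma',\theta_1,\theta_2}$ of the type in Definition~\ref{def:preann}, and then apply Theorem~\ref{thm:intro-main}(iii), i.e.\ Proposition~\ref{prop:normal-form}.

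Start with the first family. The exponent is $2\pi i(m+1/b)x=\pi i(2m\alpha+\theta_1)x$, which holds with $\alpha=1$ and $\theta_1=2/b$. For the second family, $2\pi in\gamma/x=\pi i(2n\gamma'+\theta_2)/x$ holds with $\gamma'=\gamma$ and $\theta_2=0$. Hence
\begin{equation*}
 \mathcal F_{\gamma,b}^{\perp}=\cA_{1,\gamma,2/b,0}.
\end{equation*}
The normalized parameters are then
\begin{equation*}
 q=\alpha\gamma'=\gamma>1,\qquad \zeta=e^{2\pi i/b},\qquad \eta=1.
\end{equation*}
Because $b\ge2$, we have $\zeta\ne1$, so the phases are nontrivial.

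By Proposition~\ref{prop:normal-form}, in the twisted case $(\zeta,\eta)\ne(1,1)$ the map $\mathsf E_{q,\zeta,\eta}$ injects $BV((1,q))$ into $\cA^{BV}_{q,\zeta,\eta}\subset\cA_{q,\zeta,\eta}$; it is injective because it has the restriction map as a left inverse. Since $q>1$, the interval $(1,q)$ is nondegenerate and $BV((1,q))$ is infinite-dimensional; for instance it contains the indicators of disjoint subintervals. Therefore $\cA_{q,\zeta,\eta}$ is infinite-dimensional.

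The scaling $f\mapsto F$ is a linear isometry onto the normalized pre-annihilator, so $\mathcal F_{\gamma,b}^{\perp}$ is infinite-dimensional as well. The final sentence of the corollary is simply the special case $1<\gamma\le b$.

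The only point needing care is the bookkeeping of conventions: the spacing is written as $2\alpha\Z$ and the phase as $\zeta=e^{\pi i\theta_1/\alpha}$, and with these conventions one must check that the phase really is $e^{2\pi i/b}\neq1$ and not a trivial root of unity. The condition $b\ge2$ exactly guarantees this. Even if the phase were trivial, the untwisted part of Proposition~\ref{prop:normal-form} would still give infinite dimension, so this check is not essential to the conclusion.
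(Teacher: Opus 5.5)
Your proposal is correct and matches the paper's proof. Both use the same parameter choice $\alpha=1$, $\theta_1=2/b$, $\theta_2=0$, which gives $q=\gamma$, $\zeta=e^{2\pi i/b}$ and $\eta=1$, and both then apply Proposition~\ref{prop:normal-form} to embed $BV((1,\gamma))$ into the pre-annihilator. As you observe, the nontriviality of $\zeta$ is not actually needed, which agrees with the paper's remark that the conclusion holds independently of $b$.
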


The range $\gamma>b$ is due to Giri \cite{Giri2020} and is also
recovered in \cite[Corollary~1.7]{GM2026}; the new range in
Corollary~\ref{cor:GM-range} is $1<\gamma\le b$.

\begin{proof}
Take $\alpha=1$, $\theta_1=2/b$, and $\theta_2=0$.  Then
\begin{equation*}
 q=\gamma,
 \qquad \zeta=e^{2\pi i/b},
 \qquad \eta=1.
\end{equation*}
The middle restriction of the shifted space contains $BV((1,\gamma))$
for every $\gamma>1$, independently of $b$.
\end{proof}

\section{Construction of the entire \texorpdfstring{$L^1$}{L1}
pre-annihilator}
\label{sec:L1-normal}

Define the bounded projection $\Pi=\Pi_{q,\zeta,\eta}$ from
$L^1((0,1))$ onto $\ker(I-Q)$ by
\begin{equation*}
 \Pi h=
 \begin{cases}
  0,&(\zeta,\eta)\ne(1,1),\\[1mm]
  \displaystyle\left(\int_0^1h(x)\,\dd x\right)\rho_q,
  &(\zeta,\eta)=(1,1).
 \end{cases}
\end{equation*}
In the untwisted case, its restriction to $BV((0,1))$ is the Riesz
projection associated with the simple eigenvalue $1$; for nontrivial
twists it is the zero projection.  Theorem~\ref{thm:strong-limit} shows
that $\Pi$ is the strong asymptotic projection on $L^1((0,1))$.

\begin{theorem}\label{thm:strong-limit}
For every $h\in L^1((0,1))$,
\begin{equation*}
 Q_{q,\zeta,\eta}^{\,N}h\longrightarrow\Pi h
 \qquad\mathrm{in}\ L^1((0,1)).
\end{equation*}
On $BV((0,1))$, the convergence is exponential in the $BV$ norm.
\end{theorem}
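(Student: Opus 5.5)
First I establish the exponential convergence on $BV((0,1))$; the $L^1$ statement then follows by density and the uniform $L^1$ contraction. By Theorem~\ref{thm:LY}, $Q$ is quasi-compact on $\mathsf B=BV((0,1))$ with $r_{\ess}(Q;\mathsf B)\le q^{-2}<1$, and $\norm{Q^N u}_1\le\norm u_1$. Quasi-compactness means the part of $\sigma(Q;\mathsf B)$ in $\{|z|>q^{-2}\}$ is a finite set of isolated eigenvalues of finite multiplicity. Since $Q$ contracts $L^1$, no eigenvalue on $BV$ has modulus greater than one. Proposition~\ref{prop:twisted-fixed} shows that the unimodular point spectrum is empty when $(\zeta,\eta)\ne(1,1)$ and equals $\{1\}$, with eigenspace $\C\rho_q$, when $(\zeta,\eta)=(1,1)$.

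In the twisted case, $r(Q;\mathsf B)<1$ follows at once. Hence $\norm{Q^N}_{\mathsf B\to\mathsf B}\le C\theta^N$ for some $\theta<1$, which is the claimed exponential convergence to $\Pi h=0$.

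In the untwisted case, $Q=P^2$ preserves the integral. I will use the $Q$-reducing decomposition $\mathsf B=\C\rho_q\oplus BV_0((0,1))$, already established for $P$ and hence valid for $P^2$. On $BV_0$, the restriction has no eigenvalue $1$, because $\C\rho_q\cap BV_0=\{0\}$. By Lemma~\ref{lem:positive-spectrum} it has no other unimodular eigenvalue either. Since $P^2|_{BV_0}$ is still quasi-compact, $r(P^2|_{BV_0})<1$; this is the same argument as the one displayed for $P$. Writing $h=\Pi h+(h-\Pi h)$ with $h-\Pi h\in BV_0$, I get $\norm{Q^Nh-\Pi h}_{BV}\le C\theta^N\norm{h-\Pi h}_{BV}\le C'\theta^N\norm h_{BV}$, using that $\Pi$ is bounded on $BV$ because $\norm{\rho_q}_{BV}<\infty$.

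For general $h\in L^1((0,1))$, I use an $\varepsilon/3$ argument. Choose $g\in BV$ with $\norm{h-g}_1<\varepsilon$. Then
\begin{equation*}
 \norm{Q^Nh-\Pi h}_1\le\norm{Q^N(h-g)}_1+\norm{Q^Ng-\Pi g}_1+\norm{\Pi(g-h)}_1 .
\end{equation*}
The first term is at most $\varepsilon$ by the $L^1$ contraction. The third is at most $\norm{\rho_q}_1\varepsilon=\varepsilon$, since $\Pi$ has $L^1$ norm $\norm{\rho_q}_1=1$ (or is zero in the twisted case). The middle term tends to zero by the $BV$ step, because the $BV$ norm dominates the $L^1$ norm. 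Letting $N\to\infty$ and then $\varepsilon\to0$ proves the theorem.

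The main obstacle is the passage from quasi-compactness plus the peripheral point-spectrum description to a genuine spectral gap on $BV$. The peripheral classification is stated for $L^1$ kernels, so I must note that a $BV$ eigenvector is in particular an $L^1$ eigenvector; then Proposition~\ref{prop:twisted-fixed} applies directly. I must also exclude a Jordan block at $1$ in the untwisted case. This follows from integration, exactly as for $P$: if $(I-P^2)u=c\rho_q$, integrating both sides gives $c=0$. With these two points checked, the rest is routine.
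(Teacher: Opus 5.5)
Your proof is correct. Its overall architecture matches the paper's: quasi-compactness on $BV((0,1))$ from Theorem~\ref{thm:LY}, the peripheral classification from Proposition~\ref{prop:twisted-fixed}, and extension to $L^1$ by density together with the $L^1$ contraction of $Q$ and the boundedness of $\Pi$. The twisted case and your final $\varepsilon/3$ step are essentially the paper's.

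The difference is in the untwisted case. The paper works on all of $BV((0,1))$. It shows that $1$ is an algebraically simple eigenvalue of $Q=P^2$, excluding a length-two Jordan segment $(I-Q)u=c\rho_q$ by integration. It then takes the Riesz projection $\mathcal R_1$ at $1$ and identifies $\mathcal R_1=\Pi$ by integrating the resolvent identity $\ell\big((zI-Q)^{-1}h\big)=\ell(h)/(z-1)$ over a small contour. You instead use the $Q$-invariant splitting $BV((0,1))=\C\rho_q\oplus BV_0((0,1))$. Its projection onto the first summand is $\Pi$ by construction. You show $r(Q|_{BV_0})<1$ and conclude from $Q^Nh-\Pi h=Q^N(h-\Pi h)$, so no holomorphic functional calculus is needed. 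This is slightly more economical, and it reuses the decomposition the paper already set up for $P$ in Section~4. The paper's route records explicitly that $\Pi|_{BV}$ is the Riesz projection at $1$, but this also follows from your splitting.

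Two small remarks. First, in your route the Jordan-block check is superfluous. The Lasota--Yorke estimate and Hennion's theorem apply verbatim on the closed invariant subspace $BV_0$ with weak space $L^1$, so $1$ lies outside the essential disk of $Q|_{BV_0}$. Since $1\notin\sigma_{\mathrm p}(Q|_{BV_0})$, it follows that $1\notin\sigma(Q|_{BV_0})$ outright. Second, quasi-compactness gives finiteness of the spectrum only in $\{|z|\ge\theta\}$ for each $\theta>r_{\ess}(Q)$, not in all of $\{|z|>q^{-2}\}$. That weaker statement is all your argument actually uses.
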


\begin{proof}
Theorem~\ref{thm:LY}, the compact embedding
$BV((0,1))\hookrightarrow L^1((0,1))$, and Hennion's theorem show that
$Q$ is quasi-compact on $BV((0,1))$, with
\begin{equation*}
 r_{\ess}(Q;BV((0,1)))<1.
\end{equation*}
Every spectral point outside the essential disk is therefore an
isolated eigenvalue of finite algebraic multiplicity.  The
$L^1$-contraction excludes eigenvalues of modulus greater than one, and
Proposition~\ref{prop:twisted-fixed} excludes eigenvalues on the unit
circle when $(\zeta,\eta)\ne(1,1)$.  Hence
\begin{equation*}
 r(Q;BV((0,1)))<1
\end{equation*}
for nontrivial twists.

In the untwisted case $Q=P^2$.  Its only peripheral eigenvalue is $1$,
and its eigenspace is $\C\rho_q$.  The eigenvalue is algebraically
simple.  Indeed, if a nontrivial Jordan chain existed, one of its
length-two terminal segments would give $u\in BV((0,1))$ and $c\ne0$
such that
\begin{equation*}
 (I-Q)u=c\rho_q.
\end{equation*}
Since $Q$ preserves the integral and
$\int_0^1\rho_q\,\dd x=1$, integration gives
\begin{equation*}
 0=\int_0^1(I-Q)u\,\dd x=c,
\end{equation*}
a contradiction.  This also excludes every longer Jordan chain, because
such a chain contains a length-two segment.

Let $\mathcal R_1$ be the Riesz projection of $Q$ at $1$ on
$BV((0,1))$.  Algebraic simplicity gives
\begin{equation*}
 \Ran\mathcal R_1=\C\rho_q.
\end{equation*}
If $\ell(h)=\int_0^1h\,\dd x$, then $\ell Q=\ell$, and hence
\begin{equation*}
 \ell\big((zI-Q)^{-1}h\big)=\frac{\ell(h)}{z-1}
\end{equation*}
for $z$ in the resolvent set.  Integrating around a small positively
oriented contour enclosing $1$ and no other spectral point yields
\begin{equation*}
 \ell(\mathcal R_1h)=\ell(h).
\end{equation*}
Since $\ell(\rho_q)=1$, it follows that
\begin{equation*}
 \mathcal R_1h
 =\left(\int_0^1h(x)\,\dd x\right)\rho_q
 =\Pi h.
\end{equation*}
The quasi-compact spectral decomposition now gives exponential
convergence on $BV((0,1))$ in both cases.

Finally, $BV((0,1))$ is dense in $L^1((0,1))$.  Approximation by a $BV$
function, together with the $L^1$ contraction of $Q$ and the boundedness
of $\Pi$, extends the convergence to every $h\in L^1((0,1))$.
\end{proof}

Put
\begin{equation*}
 X=L^1((0,1)),\qquad Y=L^1((1,q)),\qquad
 Q=Q_{q,\zeta,\eta},\qquad A=A_{q,\zeta,\eta}.
\end{equation*}
The operators $Q$, $A$, $B_{\eta,q}$ and $\Phi$ are those of
\eqref{eq:Q-def}, \eqref{eq:A-def}, \eqref{eq:B-def} and
\eqref{eq:Phi-def}.

\subsection{The Green solvability domain}\label{subsec:green-domain}

The range of $I-Q$ is not closed on $X$, as
Proposition~\ref{prop:disk-spectrum} below shows.  Thus the bounded
$BV$ inverse cannot extend to all of $X$, and the graph equation need
not be solvable for every middle datum.  We therefore isolate exactly
those data for which the normalized Green sums converge; this largest
convergence set is the maximal Green domain defined below.

\begin{definition}
For $N\ge1$ and $v\in Y$, put
\begin{equation*}
 \mathsf G_Nv=\sum_{j=0}^{N-1}Q^jAv,
\end{equation*}
and call
\begin{equation*}
 \mathfrak D_{q,\zeta,\eta}
 =\left\{v\in Y:(\mathsf G_Nv)_{N\ge1}
 \ \mathrm{converges\ in}\ X\right\}
\end{equation*}
the \emph{maximal Green domain} associated with $(Q,A)$.  For
$v\in\mathfrak D_{q,\zeta,\eta}$, set
\begin{equation*}
 \mathsf Gv=\lim_{N\to\infty}\mathsf G_Nv.
\end{equation*}
\end{definition}

Since $\Pi Q=\Pi$, the space $\ker\Pi$ is invariant under $Q$, and
$I-Q$ maps $\ker\Pi$ into itself.

\begin{theorem}\label{thm:L1-Green}
Let $q>1$.  Then
\begin{equation}\label{eq:Green-domain-range}
 \mathfrak D_{q,\zeta,\eta}
 =\left\{v\in Y:
 Av\in\Ran\big((I-Q)|_{\ker\Pi}:
 \ker\Pi\to\ker\Pi\big)\right\}.
\end{equation}
For $v\in\mathfrak D_{q,\zeta,\eta}$,
\begin{equation}\label{eq:Green-solution}
 (I-Q)\mathsf Gv=Av,\qquad \Pi\mathsf Gv=0,
\end{equation}
and every solution of $(I-Q)u=Av$ has the unique form
\begin{equation}\label{eq:all-L1-solutions}
 u=\mathsf Gv+k,\qquad k\in\Ran\Pi=\ker(I-Q).
\end{equation}
The space $\mathfrak D_{q,\zeta,\eta}$ is Banach for the graph norm
\begin{equation*}
 \norm v_{\mathfrak D}=\norm v_1+\norm{\mathsf Gv}_1,
\end{equation*}
and contains $BV((1,q))$ densely in the ambient $L^1$ norm.
\end{theorem}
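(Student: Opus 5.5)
The argument rests on three facts already established. First, $\Pi$ is a bounded projection onto $\ker(I-Q)$ with $\Pi Q=Q\Pi=\Pi$; in the untwisted case this holds because $Q$ preserves the integral and $Q\rho_q=\rho_q$. Second, Theorem~\ref{thm:strong-limit} gives $Q^N\to\Pi$ strongly on $X$. Third, $\Pi A=0$, which is trivial for nontrivial twists and is \eqref{eq:A-mean-zero} extended from $BV$ to $L^1$ by density and continuity of the integral.

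\textbf{Step 1: the domain equals the range.} Suppose $v\in\mathfrak D_{q,\zeta,\eta}$. Then
\[
(I-Q)\mathsf G_Nv=Av-Q^NAv .
\]
Letting $N\to\infty$ and using Theorem~\ref{thm:strong-limit} together with $\Pi A=0$, the right side tends to $Av$. This gives $(I-Q)\mathsf Gv=Av$. Moreover $\Pi\mathsf G_Nv=\sum_{j<N}\Pi Av=0$, so $\Pi\mathsf Gv=0$. Hence $Av$ lies in the range of $(I-Q)|_{\ker\Pi}$.

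Conversely, suppose $Av=(I-Q)w$ with $w\in\ker\Pi$. Telescoping gives
\[
\mathsf G_Nv=\sum_{j<N}Q^j(I-Q)w=w-Q^Nw\to w-\Pi w=w,
\]
so $v\in\mathfrak D_{q,\zeta,\eta}$ and $\mathsf Gv=w$. This proves \eqref{eq:Green-domain-range} and \eqref{eq:Green-solution}.

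\textbf{Step 2: all solutions.} If $(I-Q)u=Av$, then $(I-Q)(u-\Pi u)=Av-\Pi Av=Av$, so $v\in\mathfrak D$ by Step~1. Then $u-\mathsf Gv$ lies in $\ker(I-Q)$, which equals $\Ran\Pi$ by Proposition~\ref{prop:twisted-fixed} with $\lambda=1$. Uniqueness of the decomposition follows from $\ker\Pi\cap\Ran\Pi=\{0\}$, since $\Pi\mathsf Gv=0$. This gives \eqref{eq:all-L1-solutions}.

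\textbf{Step 3: completeness.} Let $(v_n)$ be Cauchy in $\norm{\cdot}_{\mathfrak D}$. Then $v_n\to v$ in $Y$ and $\mathsf Gv_n\to w$ in $X$. Applying $I-Q$ and $A$, which are both bounded on $L^1$ (Proposition~\ref{prop:Banach-graph}), gives $(I-Q)w=Av$. Since $\ker\Pi$ is closed, $\Pi w=0$. By Step~1, $v\in\mathfrak D$ and $\mathsf Gv=w$, so the $\mathfrak D$-norm converges. In short, the graph of the closed operator $v\mapsto\mathsf Gv$ is closed.

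\textbf{Step 4: the $BV$ core.} If $v\in BV((1,q))$, Proposition~\ref{prop:normal-form} supplies $u\in BV$ with $(I-Q)u=Av$. By Step~2, $v\in\mathfrak D$. Density of $BV((1,q))$ in $L^1((1,q))$ is standard, and $\mathfrak D\subset Y$, so the core is dense in the ambient $L^1$ norm.

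The only delicate point is that $\Pi A=0$ on all of $Y$ and not only on the $BV$ data. In the twisted case this is automatic. In the untwisted case it holds because each operator entering $A$ preserves the relevant integral under $L^1$ limits. Apart from this, everything follows from the strong convergence $Q^N\to\Pi$.
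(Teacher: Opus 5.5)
Your proposal is correct and follows essentially the paper's argument: $\Pi A=0$ by density, telescoping $\mathsf G_Nv$ against a solution together with $Q^N\to\Pi$ for the domain identity and the description of all solutions, closedness of $\mathsf G$ for completeness, and Proposition~\ref{prop:normal-form} for the $BV$ core. The differences are cosmetic. The paper obtains $(I-Q)\mathsf Gv=Av$ from $\mathsf G_{N+1}v-\mathsf G_Nv=Q^NAv\to0$ rather than from the strong limit, and it telescopes directly with an arbitrary solution $u$ to get $\mathsf Gv=u-\Pi u$, instead of first projecting $u$ into $\ker\Pi$.
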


\begin{proof}
In the untwisted case, \eqref{eq:A-mean-zero} was proved first for
$v\in BV((1,q))$.  It extends to every $v\in Y$ because $BV((1,q))$ is
dense in $Y$, the map $A:Y\to X$ is bounded, and integration on $(0,1)$
is continuous.  Hence $\Pi Av=0$ in both phase regimes.
Since $\Pi Q=\Pi$, every $\mathsf G_Nv$ belongs to $\ker\Pi$.
The identities
\begin{align*}
 (I-Q)\mathsf G_Nv&=Av-Q^NAv,\\
 \mathsf G_{N+1}v-\mathsf G_Nv&=Q^NAv
\end{align*}
show that convergence of $\mathsf G_Nv$ implies
\eqref{eq:Green-solution}.  Conversely, if $(I-Q)u=Av$, then
\begin{equation}\label{eq:solution-telescope}
 \mathsf G_Nv=u-Q^Nu\longrightarrow u-\Pi u
\end{equation}
by Theorem~\ref{thm:strong-limit}.  This proves
\eqref{eq:Green-domain-range} and \eqref{eq:all-L1-solutions}.

If $v_j\to v$ in $Y$ and $\mathsf Gv_j\to u$ in $X$, passage to the
limit in \eqref{eq:Green-solution} gives
\begin{equation*}
 (I-Q)u=Av,\qquad \Pi u=0.
\end{equation*}
The preceding argument yields $u=\mathsf Gv$, so $\mathsf G$ is closed
and the graph norm is complete.  Proposition~\ref{prop:normal-form}
shows that $BV((1,q))\subset\mathfrak D_{q,\zeta,\eta}$; its ordinary
$L^1$ density gives the last assertion about the domain.

For $v\in BV((1,q))$, uniqueness of the solution in $\ker\Pi$ gives
\begin{equation*}
 \mathsf Gv=
 \begin{cases}
  (I-Q)^{-1}Av,&(\zeta,\eta)\ne(1,1),\\[1mm]
  G_qAv,&(\zeta,\eta)=(1,1).
 \end{cases}
\end{equation*}
Thus $\mathsf G$ restricts to the $BV$ inverses used in
Proposition~\ref{prop:normal-form}.
\end{proof}

\begin{corollary}\label{cor:L1-normal}
For $v\in\mathfrak D_{q,\zeta,\eta}$, extend the notation of
Proposition~\ref{prop:normal-form} by
\begin{equation*}
 \mathsf E_{q,\zeta,\eta}v=\Phi(\mathsf Gv,v).
\end{equation*}
Then
\begin{equation}\label{eq:full-normal-form}
 \cA_{q,\zeta,\eta}
 =
 \begin{cases}
  \mathsf E_{q,\zeta,\eta}
  \big(\mathfrak D_{q,\zeta,\eta}\big),
  &(\zeta,\eta)\ne(1,1),\\[1mm]
  \C\Psi_q\oplus
  \mathsf E_{q,1,1}\big(\mathfrak D_{q,1,1}\big),
  &(\zeta,\eta)=(1,1),
 \end{cases}
\end{equation}
where $\Psi_q=\Phi(\rho_q,0)$.  Moreover,
\begin{equation}\label{eq:Green-extension-bounds}
 \norm v_{\mathfrak D}
 \le\norm{\mathsf E_{q,\zeta,\eta}v}_{L^1(\R_+)}
 \le2\norm v_{\mathfrak D}.
\end{equation}
\end{corollary}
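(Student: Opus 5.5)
The plan is to combine Proposition~\ref{prop:Banach-graph}, which identifies $\cA_{q,\zeta,\eta}$ with $\ker D_{q,\zeta,\eta}$ via $\Phi$, with the description of all $L^1$ solutions in Theorem~\ref{thm:L1-Green}.

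\emph{Step 1: inclusion ``$\supseteq$''.} Take $v\in\mathfrak D_{q,\zeta,\eta}$. By \eqref{eq:Green-solution}, $(\mathsf Gv,v)\in\ker D_{q,\zeta,\eta}$, so $\mathsf E_{q,\zeta,\eta}v=\Phi(\mathsf Gv,v)$ lies in $\cA_{q,\zeta,\eta}$ by Proposition~\ref{prop:graph}. In the untwisted case we also need $\Psi_q$. Here $(I-P^2)\rho_q=0=A\cdot 0$, so $(\rho_q,0)\in\ker D$ and $\Psi_q\in\cA_{q,1,1}$. Since $\Phi$ is linear on $\ker D$, all sums $c\Psi_q+\mathsf E_{q,1,1}v$ are pre-annihilators.

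\emph{Step 2: inclusion ``$\subseteq$''.} Let $F\in\cA_{q,\zeta,\eta}$ and write $F=\Phi(F_0,v)$ with $(I-Q)F_0=Av$. The existence of the solution $F_0$ shows, through \eqref{eq:Green-domain-range} and \eqref{eq:solution-telescope}, that $v\in\mathfrak D_{q,\zeta,\eta}$. Then \eqref{eq:all-L1-solutions} gives $F_0=\mathsf Gv+k$ with $k\in\Ran\Pi$.
\begin{itemize}
\item If $(\zeta,\eta)\ne(1,1)$, then $\Pi=0$, so $k=0$ and $F=\mathsf E_{q,\zeta,\eta}v$.
\item If $(\zeta,\eta)=(1,1)$, then $k=c\rho_q$, and linearity of $\Phi$ gives $F=c\Psi_q+\mathsf E_{q,1,1}v$.
\end{itemize}

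\emph{Step 3: directness of the untwisted sum.} Suppose $c\Psi_q=\mathsf E_{q,1,1}v$. Restricting to $(1,q)$ gives $0=v$, hence $\mathsf Gv=0$ and $\mathsf E_{q,1,1}v=0$. Since $\Psi_q\neq0$ (because $\rho_q\neq0$ on $(0,1)$), we get $c=0$.

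\emph{Step 4: the norm bounds.} Apply \eqref{eq:Phi-bounds} to the pair $(\mathsf Gv,v)$:
\[
\norm{\mathsf Gv}_1+\norm v_1\le\norm{\Phi(\mathsf Gv,v)}_1\le2\big(\norm{\mathsf Gv}_1+\norm v_1\big).
\]
This is exactly \eqref{eq:Green-extension-bounds} with the graph norm $\norm v_{\mathfrak D}$.

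None of these steps is a real obstacle, since the substantive work (strong convergence $Q^N\to\Pi$ and the telescoping identity) is already done in Theorems~\ref{thm:strong-limit} and~\ref{thm:L1-Green}. The one point needing care is Step 2: we must check that membership in $\mathfrak D_{q,\zeta,\eta}$ follows from the mere existence of an $L^1$ solution $F_0$. This needs no a priori condition $\Pi F_0=0$, because \eqref{eq:solution-telescope} yields convergence of $\mathsf G_Nv$ for any solution.
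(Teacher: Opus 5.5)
Your proposal is correct and follows essentially the same route as the paper's (very terse) proof. The set identity comes from Proposition~\ref{prop:graph} together with the solution description \eqref{eq:all-L1-solutions}, where the telescoping identity guarantees $v\in\mathfrak D_{q,\zeta,\eta}$. Directness comes from restricting to $(1,q)$ and using $\rho_q\neq0$, and the norm bounds are \eqref{eq:Phi-bounds} applied to the pair $(\mathsf Gv,v)$.
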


\begin{proof}
Proposition~\ref{prop:graph} and \eqref{eq:all-L1-solutions} give
\eqref{eq:full-normal-form}.
Directness in the untwisted case follows by restriction to $(1,q)$ and
then from $\rho_q\ne0$.  The inequalities
\eqref{eq:Green-extension-bounds} are \eqref{eq:Phi-bounds} with
$u=\mathsf Gv$.
\end{proof}

\subsection{The spectral obstruction on \texorpdfstring{$L^1$}{L1}}

Closed-disk $L^1$ spectra and closed-range criteria for
Frobenius--Perron operators have classical precedents in
\cite{Ding1994,DingDuLi1994}.  Here the operator is the complex-weighted
twisted product $Q_{q,\zeta,\eta}=\cL_{\zeta,q}\cL_{\eta,q}$, which is
generally not positivity-preserving.  The next proposition
proves the disk spectrum and nonclosed range directly for this operator
and identifies the range closure required by the HUP graph equation.

\begin{proposition}\label{prop:disk-spectrum}
Let $\mathbb D=\{\lambda\in\C:|\lambda|<1\}$ and define the Fredholm
essential spectrum by
\begin{equation*}
 \sigma_{\mathrm{e,F}}(Q;X)
 :=\{\lambda\in\C:
 Q-\lambda I\ \mathrm{is\ not\ Fredholm\ on}\ X\}.
\end{equation*}
Then
\begin{equation}\label{eq:disk-spectrum}
 \sigma(Q;X)=\sigma_{\mathrm{e,F}}(Q;X)=\overline{\mathbb D}.
\end{equation}
The range $\Ran(I-Q)$ is not closed.  More precisely,
\begin{equation}\label{eq:range-closure}
 \overline{\Ran(I-Q)}=\ker\Pi
 =
 \begin{cases}
  X,&(\zeta,\eta)\ne(1,1),\\[1mm]
  \displaystyle\left\{g\in X:\int_0^1g(x)\,\dd x=0\right\},
  &(\zeta,\eta)=(1,1),
 \end{cases}
\end{equation}
and $\Ran(I-Q)$ is a proper dense subspace of the displayed space.
\end{proposition}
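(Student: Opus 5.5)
We prove \eqref{eq:disk-spectrum} in two steps. Since $\norm{Q}_{X\to X}\le1$, we have $\sigma(Q;X)\subset\overline{\mathbb D}$. The main step is to show that every $\lambda\in\mathbb D$ lies in $\sigma_{\mathrm{e,F}}(Q;X)$. Because $\sigma_{\mathrm{e,F}}$ is closed, this gives the whole closed disk, and the chain $\sigma_{\mathrm{e,F}}\subset\sigma\subset\overline{\mathbb D}$ closes up.

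To see that $Q-\lambda I$ is not Fredholm for $|\lambda|<1$, we produce an infinite-dimensional kernel of the adjoint. The adjoint on $L^\infty((0,1))$ is the twisted Koopman operator
\begin{equation*}
 Q^*\varphi=\eta^{d_q}\,\big(\zeta^{d_q}\varphi\big)\circ T_q\cdot(\text{appropriately ordered}),
\end{equation*}
that is, $Q^*\varphi(y)=\chi(y)\varphi(T_q^2y)$ with the cocycle $\chi$ from the proof of Proposition~\ref{prop:twisted-fixed}. We solve $\chi(y)\varphi(T_q^2y)=\lambda\varphi(y)$ as follows. Choose a full rank-two cylinder $C=\psi_{n,m}((0,1))$. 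Prescribe $\varphi$ arbitrarily in $L^\infty$ on a "fundamental domain" $W$, for instance $W=(0,1)\setminus\bigcup_{k\ge1}\ldots$. It is simpler to use $W=C$ (more generally, a set $W$ whose forward $T_q^2$-images avoid returning to $W$ is not needed). Then define $\varphi$ by the backward recursion $\varphi(y)=\lambda^{-1}\chi(y)\varphi(T_q^2 y)$. This runs in the wrong direction, so instead we define it forward: $\varphi=\sum_{k\ge0}\lambda^k\,(\text{pullback of }g\text{ by }T_q^{2k})$ with twisting. Concretely, take $g\in L^\infty$ and set
\begin{equation*}
 \varphi=\sum_{k\ge0}\lambda^k\,\overline{\chi_k}\,(g\circ T_q^{2k}),\qquad \chi_k=\prod_{i<k}\chi\circ T_q^{2i}.
\end{equation*}
This series converges in $L^\infty$ since $|\lambda|<1$, and it gives $(I-\lambda^{-1}\ldots)$. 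Equivalently, $Q^*-\lambda$ has range containing a complement of bounded operators. Cleaner: the map $g\mapsto\varphi$ inverts $I-\lambda (\text{twisted Koopman})$, which is not the kernel we want.

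We therefore switch to the predual picture. Using Lemma~\ref{lem:full-cylinder}-type full branches, we build $L^1$ eigenfunctions $Qh=\lambda h$ directly. Take $h_0$ supported in a full branch image and set $h=\sum_{k\ge0}\lambda^{-k}\cdot(\text{twisted pushforward})$; this diverges. The opposite choice uses the transfer along a single inverse branch $\psi=\psi_{n,m}$: the weighted composition $R u=\overline{\eta^n\zeta^m}|\psi'|\,u\circ\psi$ satisfies $QR u=u+(\text{other branches})$, which is not exact either. Given these difficulties, the most reliable route is an approximate-eigenvector (Weyl sequence) argument for $\lambda$ on the unit circle, which suffices. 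For $|\lambda|=1$ we take $u_j$ concentrated near a $T_q^2$-orbit that escapes toward $0$ (where $|T_q'|$ is huge and branches are nearly full). Setting $u_j=\sum_{k<j}\lambda^{-k}Q^k w$ for $w$ a narrow bump, with norms normalized, we aim for $\norm{(Q-\lambda)u_j}_1/\norm{u_j}_1\to0$ and $u_j\rightharpoonup0$. The $L^1$ norm is preserved because $P^2$ preserves mass and twisting only changes phases on disjoint cylinders. This shows $\partial\mathbb D\subset\sigma_{\mathrm{e,F}}$. For interior points we use that $\sigma_{\mathrm{e,F}}$ differs from $\sigma$ only by isolated eigenvalues of finite multiplicity in components meeting the resolvent set. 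The unbounded component of $\C\setminus\sigma_{\mathrm{e,F}}$ is $\{|\lambda|>1\}$ (once $\partial\mathbb D\subset\sigma_{\mathrm{e,F}}$). In the bounded component $\mathbb D$, the index is constant, and if $Q-\lambda$ were Fredholm there, the kernel of $Q-\lambda$ would be nontrivial for an open set of $\lambda$. To rule out this degenerate case, we show directly that $\ker(Q^*-\bar\lambda)\subset L^\infty$ is infinite-dimensional, using branches near $0$: $L^\infty$ functions constant-twisted along infinitely many disjoint $T_q^2$-invariant structures. This is the main obstacle, and we expect to need the full-branch freedom ($n,m$ large) to construct infinitely many independent solutions.

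Finally, \eqref{eq:range-closure} follows. The inclusion $\Ran(I-Q)\subset\ker\Pi$ holds since $\Pi Q=\Pi$. Density holds because the annihilator of $\Ran(I-Q)$ is $\ker(I-Q^*)$, and by Theorem~\ref{thm:strong-limit} any $\varphi$ in it satisfies $\langle h,\varphi\rangle=\langle Q^Nh,\varphi\rangle\to\langle\Pi h,\varphi\rangle$, so $\varphi$ vanishes on $\ker\Pi$. Nonclosedness holds because, if the range were closed, $I-Q$ restricted to $\ker\Pi$ would be a bijection onto a Banach space (its kernel there is trivial by Proposition~\ref{prop:twisted-fixed}). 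Then $1\notin\sigma(Q|_{\ker\Pi})$, and $I-Q$ would be Fredholm on $X$, contradicting $1\in\sigma_{\mathrm{e,F}}$. Properness is immediate from the same contradiction.
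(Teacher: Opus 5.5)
Your proposal has a genuine gap at its central step. The inclusion $\mathbb D\subset\sigma_{\mathrm{e,F}}(Q;X)$ is never established, and the route you commit to cannot work. You propose to show that $\ker(Q^*-\bar\lambda)$ is infinite-dimensional in $L^\infty$. With your formula $Q^*\varphi=\chi\,(\varphi\circ T_q^2)$, $|\chi|=1$, the kernel of $Q^*-\mu I$ is in fact trivial for every $|\mu|<1$. The eigen-equation gives $|\varphi|\circ T_q^2=|\mu|\,|\varphi|$ a.e., and integrating against the $T_q$-invariant density $\rho_q$ yields $(1-|\mu|)\int_0^1|\varphi|\rho_q\,\mathrm{d}x=0$, so $\varphi=0$. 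Hence $Q-\lambda I$ has dense range on the whole open disk. Non-Fredholmness there can only come from $\ker(Q-\lambda I)$, which you never construct.

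Your Weyl sequence does not help either. With a fixed bump $w$, the claim that $Q$ preserves $L^1$ norms confuses $\norm{Qw}_1$ with $\norm{P^2|w|}_1=\norm w_1$. All full branches map onto $(0,1)$, so twisted contributions overlap and cancel. Theorem~\ref{thm:strong-limit} gives $Q^kw\to\Pi w$, exponentially fast for $w\in BV$. When $(\zeta,\eta)\ne(1,1)$, or $w$ has mean zero, $\norm{u_j}_1$ therefore stays bounded while $\norm{(Q-\lambda I)u_j}_1=\norm{\lambda^{1-j}Q^jw-\lambda w}_1\to\norm w_1$. Letting $w=w_j$ shrink into a depth-$2j$ cylinder around a non-periodic point, as in the proof of Proposition~\ref{prop:generic-growth}, would rescue it. But that only treats the circle, which closedness of $\sigma_{\mathrm{e,F}}$ already gives once the open disk is covered.

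The missing idea is the one you discarded: $Q$ has an \emph{exact} isometric right inverse along a single full branch. Your operator $u\mapsto\overline{\eta^n\zeta^m}|\psi'|\,u\circ\psi$ is, up to phase, just the $\psi_{n,m}$-term of $Q$. What is needed is the pushforward into the cylinder $C=\psi_{n,m}((0,1))$, with $n,m>q$:
\[
 \mathcal Ru=\overline{\eta^n\zeta^m}\,\one_C\,|(T_q^2)'|\,(u\circ T_q^2).
\]
Because $\mathcal Ru$ vanishes off $C$, only the preimage $\psi_{n,m}(x)$ contributes to $(Q\mathcal Ru)(x)$. So $Q\mathcal R=I$ exactly and $\norm{\mathcal Ru}_1=\norm u_1$. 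The paper writes $\mathcal R=R_{\eta,n}R_{\zeta,m}$, where $(R_{\omega,a}g)(y)=\omega^{-a}\one_{I_a}(y)\,qy^{-2}g(q/y-a)$ satisfies $\cL_{\omega,q}R_{\omega,a}=I$.

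Now feed in elements of $\ker Q$. For distinct $a,b>q$ outside $\{n,m\}$, $h=R_{\eta,a}g-R_{\eta,b}g$ has $\cL_{\eta,q}h=0$ and $\norm h_1=2\norm g_1$. For $|\lambda|<1$, the series $F=\sum_{k\ge0}\lambda^k\mathcal R^kh$ has disjointly supported terms and converges. It satisfies $QF=\lambda F$, and $(I-\lambda\mathcal R)F=h$ makes $g\mapsto F$ injective. So $\dim\ker(Q-\lambda I)=\infty$ on $\mathbb D$. Truncated at $\lambda=1$, the same vectors give $(I-Q)\sum_{k<N}\mathcal R^kh=\mathcal R^{N-1}h$ with $\norm{\sum_{k<N}\mathcal R^kh}_1=N\norm h_1$; this is how the paper obtains the nonclosed range directly.

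The rest of your proposal is sound. The density of $\Ran(I-Q)$ in $\ker\Pi$ via $\ker(I-Q^*)$ and $\langle Q^Nh,\varphi\rangle\to\langle\Pi h,\varphi\rangle$ is a valid duality substitute for the paper's Ces\`aro identity. Your deduction of nonclosedness is also correct: a closed range would make $(I-Q)|_{\ker\Pi}$ invertible, hence $I-Q$ Fredholm on $X=\ker\Pi\oplus\Ran\Pi$. But that deduction presupposes $1\in\sigma_{\mathrm{e,F}}(Q;X)$, which is exactly the missing step.
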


\begin{proof}
For an integer $n>q$, put
\begin{equation*}
 I_n=\left(\frac q{n+1},\frac qn\right)
\end{equation*}
and define
\begin{equation*}
 (R_nf)(y)=\one_{I_n}(y)\frac q{y^2}
 f\!\left(\frac qy-n\right),\qquad
 R_{\omega,n}=\omega^{-n}R_n.
\end{equation*}
A change of variables gives
\begin{equation*}
 \cL_{\omega,q}R_{\omega,n}=I,\qquad
 \norm{R_{\omega,n}f}_1=\norm f_1.
\end{equation*}
Fix $n,m>q$ and set
\begin{equation*}
 \mathscr R=R_{\eta,n}R_{\zeta,m}.
\end{equation*}
Then $Q\mathscr R=I$.  Choose distinct integers $a,b>q$ outside
$\{n,m\}$, and define
\begin{equation*}
 \iota_{a,b}g=R_{\eta,a}g-R_{\eta,b}g,\qquad g\in X.
\end{equation*}
The intervals $I_a$ and $I_b$ are disjoint, so
\begin{equation}\label{eq:branch-difference-isometry}
 \norm{\iota_{a,b}g}_1
 =\norm{R_{\eta,a}g}_1+\norm{R_{\eta,b}g}_1
 =2\norm g_1.
\end{equation}
Thus $\iota_{a,b}$ is injective, and
\begin{equation*}
 \cL_{\eta,q}\iota_{a,b}g=g-g=0,
 \qquad Q\iota_{a,b}g=0.
\end{equation*}

Fix $0\ne g\in X$ and put $h=\iota_{a,b}g$.  The functions
$\mathscr R^kh$, $k\ge0$, have pairwise
disjoint supports: $\mathscr R^kh$ is supported in the two cylinders with
digit words $(n,m)^ka$ and $(n,m)^kb$.  Since $a,b\notin\{n,m\}$, these
cylinders are pairwise disjoint as $k$ varies.  Consequently,
\begin{equation*}
 \norm{\sum_{k=0}^{N-1}\mathscr R^kh}_1=N\norm h_1,
\end{equation*}
whereas
\begin{equation*}
 (I-Q)\sum_{k=0}^{N-1}\mathscr R^kh=\mathscr R^{N-1}h.
\end{equation*}
After normalization, this gives unit vectors $u_N\in X$ such that
\begin{equation}\label{eq:approx-fixed}
 \norm{(I-Q)u_N}_1=\frac1N.
\end{equation}

For $|\lambda|<1$, the disjoint-support series
\begin{equation*}
 F_{\lambda,g}
 =\sum_{k=0}^{\infty}
 \lambda^k\mathscr R^k\iota_{a,b}g
\end{equation*}
converges in $X$ and satisfies
\begin{equation*}
 QF_{\lambda,g}=\lambda F_{\lambda,g}.
\end{equation*}
Moreover,
\begin{equation*}
 (I-\lambda\mathscr R)F_{\lambda,g}=\iota_{a,b}g.
\end{equation*}
Thus $F_{\lambda,g}=0$ implies $\iota_{a,b}g=0$, and
\eqref{eq:branch-difference-isometry} gives $g=0$.  Hence
$g\mapsto F_{\lambda,g}$ embeds the infinite-dimensional space $X$ into
$\ker(Q-\lambda I)$ for every $\lambda\in\mathbb D$.  It follows that
\begin{equation*}
 \mathbb D\subset\sigma_{\mathrm{e,F}}(Q;X).
\end{equation*}
The Fredholm operators form an open subset of the bounded operators on
$X$.  Since $\lambda\mapsto Q-\lambda I$ is norm-continuous,
$\sigma_{\mathrm{e,F}}(Q;X)$ is closed.  Finally, $Q$ is an $L^1$
contraction and every invertible operator is Fredholm, so
\begin{equation*}
 \overline{\mathbb D}
 \subset\sigma_{\mathrm{e,F}}(Q;X)
 \subset\sigma(Q;X)
 \subset\overline{\mathbb D}.
\end{equation*}
This proves \eqref{eq:disk-spectrum}.

If $\Ran(I-Q)$ were closed, the induced bijection from
$X/\ker(I-Q)$ onto $\Ran(I-Q)$ would have a bounded inverse.  Thus, for
some $C>0$,
\begin{equation}\label{eq:closed-range-quotient}
 \operatorname{dist}_{L^1}\big(u,\ker(I-Q)\big)
 \le C\norm{(I-Q)u}_1,\qquad u\in X.
\end{equation}
For nontrivial twists, $\ker(I-Q)=\{0\}$ by
Proposition~\ref{prop:twisted-fixed}, so \eqref{eq:approx-fixed}
contradicts \eqref{eq:closed-range-quotient}.  In the untwisted case the
branch right inverses preserve the integral, so the vectors $u_N$ above
have mean zero.  For every $c\in\C$,
\begin{equation*}
 |c|\le\norm{u_N-c\rho_q}_1,\qquad
 1\le\norm{u_N-c\rho_q}_1+|c|,
\end{equation*}
and therefore
\begin{equation*}
 \operatorname{dist}_{L^1}(u_N,\C\rho_q)\ge\frac12.
\end{equation*}
Together with \eqref{eq:approx-fixed}, this again contradicts
\eqref{eq:closed-range-quotient}.

It remains to prove \eqref{eq:range-closure}.  The identity
$\Pi Q=\Pi$ gives $\Ran(I-Q)\subset\ker\Pi$.  If $f\in\ker\Pi$,
Theorem~\ref{thm:strong-limit} gives $Q^Nf\to0$, and
\begin{equation*}
 f-\frac1N\sum_{k=0}^{N-1}Q^kf
 =(I-Q)\sum_{j=0}^{N-2}\frac{N-1-j}{N}Q^jf.
\end{equation*}
The left-hand side tends to $f$, proving the reverse inclusion after
closure.  Properness follows from the nonclosed-range assertion.
\end{proof}

Theorem~\ref{thm:L1-Green} shows that every middle datum is admissible
if and only if $(\mathsf G_Nv)_{N\ge1}$ converges in $X$ for every
$v\in Y$.  Pointwise convergence on $Y$ implies, by the uniform
boundedness principle,
\begin{equation*}
 \sup_{N\ge1}\norm{\mathsf G_N}_{Y\to X}<\infty.
\end{equation*}
Conversely, suppose that the last supremum is $M_0<\infty$.  Given
$v\in Y$, choose $w\in BV((1,q))$.  For $N,K\ge1$,
\begin{equation*}
 \norm{\mathsf G_Nv-\mathsf G_Kv}_1
 \le
 2M_0\norm{v-w}_1+
 \norm{\mathsf G_Nw-\mathsf G_Kw}_1.
\end{equation*}
The sequence $(\mathsf G_Nw)_{N\ge1}$ converges because
$BV((1,q))\subset\mathfrak D_{q,\zeta,\eta}$.  First choosing $w$
close to $v$ and then taking $N,K$ large shows that
$(\mathsf G_Nv)_{N\ge1}$ is Cauchy.  Consequently, every middle datum
is admissible if and only if
\begin{equation*}
 \sup_{N\ge1}
 \norm{\sum_{j=0}^{N-1}Q^jA}_{Y\to X}<\infty.
\end{equation*}

Put $\Theta=T_q^2$ and, away from endpoints, define
\begin{equation*}
 a(t)=\{t\}_1,\qquad
 c(t)=\{q/t\}_1,\qquad
 b(t)=T_q(c(t)),\qquad 1<t<q,
\end{equation*}
and, for $j,k\ge0$,
\begin{equation*}
 a_j=\Theta^j\circ a,\qquad b_k=\Theta^k\circ b.
\end{equation*}
If
\begin{equation*}
 C=\begin{pmatrix}c_{11}&c_{12}\\c_{21}&c_{22}\end{pmatrix}
\end{equation*}
is nonsingular, write
\begin{equation*}
 C\cdot x=\frac{c_{11}x+c_{12}}{c_{21}x+c_{22}}.
\end{equation*}
Then $(AB)\cdot x=A\cdot(B\cdot x)$ wherever the expressions are
defined.  Set
\begin{equation*}
 M_n(q)=
 \begin{pmatrix}-n&q\\1&0\end{pmatrix},
 \qquad
 D_r=\begin{pmatrix}1&-r\\0&1\end{pmatrix},
\end{equation*}
and, for a finite word
$\boldsymbol n=(n_1,\ldots,n_\ell)$, set
\begin{equation*}
 M_{\boldsymbol n}(q)
 =M_{n_\ell}(q)\cdots M_{n_1}(q),
 \qquad M_{\varnothing}(q)=I.
\end{equation*}

For $a_j$, let $\mathscr P(a_j)$ be the maximal open intervals on which
the finite digit vector
\begin{equation*}
 \left(\lfloor t\rfloor,d_q(a(t)),d_q(T_qa(t)),\ldots,
 d_q(T_q^{2j-1}a(t))\right)
\end{equation*}
is defined and constant, with only $\lfloor t\rfloor$ retained when
$j=0$.  Define $\mathscr P(b_k)$ in the same way from
\begin{equation*}
 \left(\left\lfloor\frac qt\right\rfloor,d_q(c(t)),
 d_q(T_qc(t)),\ldots,d_q(T_q^{2k}c(t))\right).
\end{equation*}
For such a family $\mathscr P$, let
\begin{equation*}
 \operatorname{End}(\mathscr P)
 =\bigcup_{U\in\mathscr P}\big(\partial U\cap(1,q)\big),
\end{equation*}
and put
\begin{equation}\label{eq:all-branch-endpoints}
 \mathscr B_\infty
 =\bigcup_{j\ge0}\left(
 \operatorname{End}(\mathscr P(a_j))
 \cup\operatorname{End}(\mathscr P(b_j))\right).
\end{equation}

\begin{lemma}\label{lem:branch-coding}
The set $\mathscr B_\infty$ is countable and contains every branch
endpoint encountered at any finite future iterate of the maps above.
If $U\in\mathscr P(a_j)$ and $\lfloor t\rfloor=r$ on $U$, then there is
a word $\boldsymbol n$ of length $2j$ such that
\begin{equation*}
 a_j(t)=\big(M_{\boldsymbol n}(q)D_r\big)\cdot t,\qquad t\in U.
\end{equation*}
If $U\in\mathscr P(b_k)$, then there is a word $\boldsymbol m$ of length
$2k+2$ such that
\begin{equation*}
 b_k(t)=M_{\boldsymbol m}(q)\cdot t,\qquad t\in U.
\end{equation*}
Every displayed restriction is a nonconstant M\"obius diffeomorphism.
For $j=0$, the word in the first formula is empty; the word in the
second formula is never empty.
\end{lemma}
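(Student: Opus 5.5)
The argument is an induction on the number of digits, using the matrix action $C\cdot x$ and the composition rule $(AB)\cdot x=A\cdot(B\cdot x)$.

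\textbf{Base cases.} On $T_q$, the branch with digit $n$ is $y\mapsto q/y-n$. This equals $M_n(q)\cdot y=(-ny+q)/y$. For $t\in(r,r+1)$ we have $a(t)=t-r=D_r\cdot t$, which gives the case $j=0$ with the empty word. For $b$, on an interval where $\lfloor q/t\rfloor=n_1$ we get $c(t)=q/t-n_1=M_{n_1}(q)\cdot t$. On a subinterval where $d_q(c(t))=n_2$ we get $b(t)=M_{n_2}(q)\cdot c(t)$. Hence $b=M_{(n_1,n_2)}(q)\cdot t$, a word of length $2$. This is consistent with the ordering convention $M_{\boldsymbol n}=M_{n_\ell}\cdots M_{n_1}$.

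\textbf{Inductive step.} Pass from $j$ to $j+1$, and likewise for $b_k$, by applying $\Theta=T_q^2$. On a cell $U\in\mathscr P(a_{j+1})$ the two new digits $n_{2j+1},n_{2j+2}$ are constant by definition. The composition rule then gives left multiplication by $M_{n_{2j+2}}(q)M_{n_{2j+1}}(q)$. The lengths come out as $2j$ and $2k+2$; the $b$-words are never empty because $b$ already uses two digits.

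\textbf{Properties of each restriction.} Each restriction is a composition of the translation $D_r$ and maps of the form $y\mapsto q/y-n$. Each of these is a diffeomorphism away from $y=0$, and $\det M_n(q)=-q\neq0$, so the composite is a nonconstant Möbius diffeomorphism. The singular point is avoided on $U$ because each intermediate value lies in $(0,1)$, or in $(1,q)$ for the initial variable.

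\textbf{Countability.} The cells are well defined once we show that each $\mathscr P$ is a countable family of disjoint open intervals whose union has full measure in $(1,q)$. This follows by induction. Level zero has finitely many cells. Refining one open interval on which a Möbius map $g$ is monotone into the preimages $g^{-1}(I_n)$ of the branch intervals produces countably many open subintervals. Their endpoints are the preimages of the countable set $\mathcal D_q\cup\{0,1\}$ under countably many diffeomorphisms, so each $\operatorname{End}(\mathscr P)$ is countable. Then $\mathscr B_\infty$ is a countable union of countable sets. Any branch endpoint met at a finite future iterate is, by construction, a boundary point of some cell at the corresponding level, so it lies in $\mathscr B_\infty$.

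The only step needing care is this interval bookkeeping. One must check that each refinement of an open interval by a monotone Möbius map produces open intervals and countably many endpoints. This holds because the branch partition points $q/n$ accumulate only at $0$, which is not in the interior.
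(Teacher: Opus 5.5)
Your proposal is correct and follows essentially the same route as the paper: the branch identity $T_q(x)=M_n(q)\cdot x$, the factorizations $a(t)=D_r\cdot t$ and $c(t)=M_{m_1}(q)\cdot t$, the composition rule for the matrix action, nonsingularity of the factors, and countability of $\mathscr B_\infty$ as a countable union over depths of the endpoints of countable disjoint families of open intervals. Your explicit induction on the cell refinement (preimages of the branch intervals under a monotone M\"obius map) simply spells out the bookkeeping that the paper compresses into one sentence.
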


\begin{proof}
On the branch with digit $n$,
\begin{equation*}
 T_q(x)=\frac qx-n=M_n(q)\cdot x,
\end{equation*}
whereas $a(t)=D_r\cdot t$ when $\lfloor t\rfloor=r$.  If the successive
digits along the orbit of $a(t)$ are $n_1,\ldots,n_{2j}$, then
\begin{equation*}
 T_q^{2j}(a(t))
 =\big(M_{n_{2j}}(q)\cdots M_{n_1}(q)D_r\big)\cdot t.
\end{equation*}
This proves the first formula, including the empty-word case.  If
$m_1=\lfloor q/t\rfloor$, then
\begin{equation*}
 c(t)=M_{m_1}(q)\cdot t.
\end{equation*}
Since $b_k=T_q^{2k+1}\circ c$, adjoining the next $2k+1$ digits gives
\begin{equation*}
 b_k(t)
 =\big(M_{m_{2k+2}}(q)\cdots M_{m_1}(q)\big)\cdot t.
\end{equation*}
All these matrices are nonsingular for $q>0$, so their restrictions are
nonconstant M\"obius diffeomorphisms.  At each depth, the maximal branch
intervals form a countable disjoint family.  Their endpoints are
countable, and the countable union over all depths proves the assertion
about $\mathscr B_\infty$.  Its definition includes the initial
discontinuities and the pullbacks of every later branch endpoint.
\end{proof}

\begin{proposition}
\label{prop:generic-growth}
There is a countable set $\mathfrak E\subset(1,\infty)$ of algebraic
numbers such that, whenever $q\notin\mathfrak E$, for all
$\zeta,\eta\in\T$ and $N\ge1$,
\begin{equation}\label{eq:generic-sharp-growth}
 \norm{\sum_{j=0}^{N-1}Q^jA}_{Y\to X}=2N.
\end{equation}
Consequently, $\mathfrak D_{q,\zeta,\eta}$ is proper and meagre in $Y$,
and $\mathsf G$ is unbounded with respect to the ambient $L^1$ norm.
\end{proposition}

\begin{proof}
Fix $r\ge1$ and two finite words $\boldsymbol n,\boldsymbol m$.  Put
\begin{equation*}
 A_0(q)=M_{\boldsymbol n}(q)D_r,\qquad
 B_0(q)=M_{\boldsymbol m}(q).
\end{equation*}
Since
\begin{equation*}
 \det M_n(q)=-q,\qquad \det D_r=1,
\end{equation*}
both matrices are nonsingular for $q>0$.  Hence their M\"obius
transformations agree on a nonempty open interval if and only if
\begin{equation*}
 A_0(q)=\lambda B_0(q)
\end{equation*}
for some $\lambda\ne0$.  Equivalently, after listing the four entries
of $A_0$ and $B_0$ in the same order, all six projective minors
\begin{equation*}
 \Delta_{\mu\nu}(q)
 =(A_0)_\mu(q)(B_0)_\nu(q)
 -(A_0)_\nu(q)(B_0)_\mu(q),
 \qquad 1\le\mu<\nu\le4,
\end{equation*}
vanish.  These minors belong to $\Z[q]$, and they are not all the zero
polynomial.

Indeed, if $\boldsymbol n$ is nonempty, induction gives
\begin{equation*}
 M_{\boldsymbol n}(0)
 =\begin{pmatrix}c_1&0\\c_2&0\end{pmatrix},
 \qquad (c_1,c_2)\ne(0,0),
\end{equation*}
because every branch index is a positive integer.  Consequently,
\begin{equation*}
 M_{\boldsymbol n}(0)D_r
 =\begin{pmatrix}c_1&-rc_1\\c_2&-rc_2\end{pmatrix}.
\end{equation*}
If both words are nonempty, the latter matrix has a nonzero second
column, while $M_{\boldsymbol m}(0)$ has zero second column, so the two
matrices are not projectively proportional.  If
$\boldsymbol n=\varnothing$ and $\boldsymbol m\ne\varnothing$, then
$D_r$ has rank two whereas $M_{\boldsymbol m}(0)$ has rank one.  If
$\boldsymbol n\ne\varnothing$ and $\boldsymbol m=\varnothing$, the
ranks of $M_{\boldsymbol n}(0)D_r$ and $I$ are one and two,
respectively.  Finally, if both words are empty, then $D_r$ is not a
scalar multiple of $I$ because $r\ge1$.  Thus, in every case, at least
one projective minor is nonzero at $q=0$, and therefore is a nonzero
integer polynomial.

For each triple $(r,\boldsymbol n,\boldsymbol m)$, choose one such
nonzero minor.  An identity between the corresponding branches can
occur only at a root of that polynomial.  Its roots are finite in
number and algebraic.  Taking the union of these root sets over the
countable collection of $r$ and finite branch words defines a
countable set
\begin{equation*}
 \mathfrak E\subset(1,\infty)
\end{equation*}
consisting of algebraic numbers.

Fix $q\notin\mathfrak E$.  Lemma~\ref{lem:branch-coding} and the
definition of $\mathfrak E$ show that no restriction of an $a_j$ agrees
identically with a restriction of a $b_k$ on a nonempty open interval.
Restrictions belonging to the same family cannot agree identically
either.  Indeed, if $a_j=a_k$ on a common branch interval $U$ with
$j<k$,
then a branch of $\Theta^{k-j}$ is the identity on the open interval
$a_j(U)$.  This is impossible because
\begin{equation*}
 |T_q'(x)|=\frac q{x^2}>q,\qquad 0<x<1,
\end{equation*}
so every branch of $\Theta^{k-j}$ has derivative of modulus greater than
$q^{2(k-j)}>1$.  The same argument applies to the $b$-family.

Let
\begin{equation*}
 \mathscr F=\{a_j:j\ge0\}\cup\{b_k:k\ge0\}.
\end{equation*}
For distinct $f,g\in\mathscr F$ and branch intervals
$U\in\mathscr P(f)$ and $V\in\mathscr P(g)$, the restrictions of $f$
and $g$ to $U\cap V$ are nonidentical M\"obius maps.  Hence
\begin{equation*}
 \{t\in U\cap V:f(t)=g(t)\}
\end{equation*}
is finite.  The union $\mathscr Z$ of these sets over all
$f,g,U,V$ is countable.  The set $\mathscr B_\infty$ in
\eqref{eq:all-branch-endpoints} contains the endpoints for every finite
future iterate, so we may choose
\begin{equation*}
 t_0\in(1,q)\setminus(\mathscr B_\infty\cup\mathscr Z).
\end{equation*}
Then
\begin{equation*}
 \{a_j(t_0):j\ge0\}\cup\{b_k(t_0):k\ge0\}
\end{equation*}
is pairwise distinct.

For a fixed $N$, let $U_a\in\mathscr P(a_N)$ and
$U_b\in\mathscr P(b_N)$ be the unique branch intervals containing
$t_0$.  Choose an open interval $U_N\ni t_0$ such that
\begin{equation*}
 \overline{U_N}\subset U_a\cap U_b,
\end{equation*}
and shrink it so that the closures of the $2N$ image intervals
\begin{equation*}
 a_j(U_N),\qquad b_j(U_N),\qquad 0\le j<N,
\end{equation*}
are pairwise disjoint.  Thus $U_N$ lies in a common refinement of the
two branch partitions through the next iterate.  By the definitions of
$\mathscr P(a_N)$ and $\mathscr P(b_N)$, every $a_j$ and $b_j$,
$0\le j\le N$, is a M\"obius diffeomorphism on $U_N$, and, for
$0\le j<N$, the four functions
\begin{equation*}
 d_q(a_j(t)),\quad d_q(T_qa_j(t)),\quad
 d_q(b_j(t)),\quad d_q(T_qb_j(t))
\end{equation*}
are constant there.
Choose $v_N\in C_c^\infty(U_N)$ with $v_N\ge0$ and
$\norm{v_N}_1=1$.

We record the pushforward calculation, including its phases.  Put
\begin{equation*}
 \nu(x)=\left\lfloor\frac qx\right\rfloor,\qquad 0<x<1.
\end{equation*}
After shrinking $U_N$ once more if necessary, the integers
\begin{equation*}
 r=\lfloor t\rfloor,\qquad
 m=\left\lfloor\frac qt\right\rfloor,\qquad
 n=\nu(c(t)),\qquad
 c(t)=\frac qt-m
\end{equation*}
are constant on $U_N$.  Thus $a(t)=t-r$ and
$b(t)=T_q(c(t))$ there.  If $f:U_N\to f(U_N)$ is any of the
diffeomorphisms under consideration, define
\begin{equation*}
 (f_\#v_N)(x)
 =\one_{f(U_N)}(x)
 \frac{v_N(f^{-1}(x))}
 {|f'(f^{-1}(x))|}.
\end{equation*}
Then, for every $\varphi\in L^\infty(I)$,
\begin{equation*}
 \int_I(f_\#v_N)(x)\varphi(x)\,\dd x
 =\int_{U_N}v_N(t)\varphi(f(t))\,\dd t,
 \qquad
 \norm{f_\#v_N}_1=\norm{v_N}_1=1.
\end{equation*}

The transfer-operator change of variables gives
\begin{equation}\label{eq:twisted-transfer-duality}
 \int_I(\cL_{\omega,q}h)(x)\varphi(x)\,\dd x
 =\int_Ih(y)\omega^{\nu(y)}
   \varphi(T_qy)\,\dd y.
\end{equation}
For the first term of $A$, the substitution $t=x+r$ gives
\begin{align*}
 \int_I(S_\zeta v_N)(x)\varphi(x)\,\dd x
 &=\sum_{\ell\ge1}\zeta^\ell
   \int_0^1v_N(x+\ell)\varphi(x)\,\dd x\\
 &=\zeta^r\int_{U_N}v_N(t)\varphi(a(t))\,\dd t.
\end{align*}
For the second term, \eqref{eq:twisted-transfer-duality} and the
definition of $J_q$ yield
\begin{align*}
 &\int_I\cL_{\zeta,q}(S_\eta J_qv_N)(x)\varphi(x)\,\dd x\\
 &\quad=\sum_{\ell\ge1}\eta^\ell
 \int_0^1\frac q{(\ell+y)^2}
 v_N\!\left(\frac q{\ell+y}\right)
 \zeta^{\nu(y)}\varphi(T_qy)\,\dd y\\
 &\quad=\eta^m
 \int_{U_N}v_N(t)\zeta^{\nu(c(t))}
 \varphi(T_q(c(t)))\,\dd t\\
 &\quad=\eta^m\zeta^n
 \int_{U_N}v_N(t)\varphi(b(t))\,\dd t.
\end{align*}
In the third line we used $t=q/(m+y)$ on the only summand meeting
$\operatorname{supp}v_N$; explicitly,
\begin{equation*}
 y=c(t),\qquad
 |\dd y|=\frac q{t^2}\,\dd t,\qquad
 \frac q{(m+y)^2}=\frac{t^2}{q},
\end{equation*}
so the two Jacobian factors cancel.  Therefore
\begin{equation}\label{eq:A-two-pushforwards}
 Av_N=-\zeta^r a_\#v_N+\eta^m\zeta^n b_\#v_N.
\end{equation}

Applying \eqref{eq:twisted-transfer-duality} twice also gives
\begin{equation*}
 \int_I(Qh)(x)\varphi(x)\,\dd x
 =\int_Ih(y)\eta^{\nu(y)}\zeta^{\nu(T_qy)}
   \varphi(\Theta y)\,\dd y.
\end{equation*}
If
\begin{equation*}
 f\in\{a_j:0\le j<N\}\cup\{b_j:0\le j<N\},
\end{equation*}
the functions
\begin{equation*}
 t\longmapsto\nu(f(t)),
 \qquad
 t\longmapsto\nu(T_qf(t))
\end{equation*}
are constant on $U_N$.  Hence the factor
\begin{equation*}
 \kappa_f
 =\eta^{\nu(f(t))}\zeta^{\nu(T_qf(t))}
\end{equation*}
is constant for $t\in U_N$.  Consequently, one application of $Q$
satisfies
\begin{align*}
 \int_I Q(f_\#v_N)(x)\varphi(x)\,\dd x
 &=\kappa_f\int_I(f_\#v_N)(y)
   \varphi(\Theta y)\,\dd y\\
 &=\kappa_f\int_{U_N}v_N(t)
   \varphi(\Theta(f(t)))\,\dd t,
\end{align*}
or equivalently
\begin{equation*}
 Q(f_\#v_N)=\kappa_f(\Theta\circ f)_\#v_N.
\end{equation*}

Taking an arbitrary $t\in U_N$ and interpreting an empty product as
$1$, define
\begin{align*}
 \alpha_j
 &=\zeta^r\prod_{\ell=0}^{j-1}
   \eta^{\nu(a_\ell(t))}
   \zeta^{\nu(T_qa_\ell(t))},\\
 \beta_j
 &=\eta^m\zeta^n\prod_{\ell=0}^{j-1}
   \eta^{\nu(b_\ell(t))}
   \zeta^{\nu(T_qb_\ell(t))}.
\end{align*}
The branch choices make these expressions independent of $t\in U_N$,
and $|\alpha_j|=|\beta_j|=1$.  Iterating
\eqref{eq:A-two-pushforwards} gives
\begin{equation*}
 Q^jAv_N
 =-\alpha_j(a_j)_\#v_N+\beta_j(b_j)_\#v_N,
 \qquad 0\le j<N.
\end{equation*}
All $2N$ pushforward densities have pairwise disjoint supports.
Consequently,
\begin{align*}
 \norm{\sum_{j=0}^{N-1}Q^jAv_N}_1
 &=\sum_{j=0}^{N-1}
   \left(
    |\alpha_j|\norm{(a_j)_\#v_N}_1
    +|\beta_j|\norm{(b_j)_\#v_N}_1
   \right)\\
 &=2N.
\end{align*}
Since $\norm{v_N}_Y=1$, this proves the lower bound in
\eqref{eq:generic-sharp-growth}.  The reverse bound follows from
\begin{equation*}
 \norm{\sum_{j=0}^{N-1}Q^jA}_{Y\to X}
 \le\sum_{j=0}^{N-1}\norm Q_{X\to X}^j\norm A_{Y\to X}
 \le2N.
\end{equation*}

If $\mathfrak D_{q,\zeta,\eta}$ were all of $Y$, pointwise convergence of
$\mathsf G_N$ and the uniform boundedness principle would contradict
\eqref{eq:generic-sharp-growth}.  For $m\ge1$, the set
\begin{equation*}
 C_m=\left\{v\in Y:\sup_N\norm{\mathsf G_Nv}_1\le m\right\}
\end{equation*}
is closed and absolutely convex.  If it contained a ball $B(v_0,r)$,
then $v_0\pm(r/2)w\in C_m$ for every $\norm w_1\le1$.  Subtracting the
two images gives
\begin{equation*}
 \sup_N\norm{\mathsf G_Nw}_1\le\frac{2m}{r},
\end{equation*}
contrary to \eqref{eq:generic-sharp-growth}.  Thus $C_m$ has empty
interior.  Since
$\mathfrak D_{q,\zeta,\eta}\subset\bigcup_{m\ge1}C_m$, the domain is
meagre.  Finally,
\eqref{eq:solution-telescope} gives
\begin{equation*}
 \mathsf G_Nv=\mathsf Gv-Q^N\mathsf Gv.
\end{equation*}
Suppose that $\mathsf G$ were bounded for the ambient norm, so that
\begin{equation*}
 \norm{\mathsf Gv}_1\le C\norm v_1,
 \qquad v\in\mathfrak D_{q,\zeta,\eta}.
\end{equation*}
Since $Q$ is a contraction,
\begin{equation*}
 \norm{\mathsf G_Nv}_1
 \le\norm{\mathsf Gv}_1+\norm{Q^N\mathsf Gv}_1
 \le2C\norm v_1
\end{equation*}
on this domain.  Theorem~\ref{thm:L1-Green} says that the domain is
dense in $Y$, while every finite Green sum
$\mathsf G_N:Y\to X$ is bounded.  Approximation therefore extends the
last estimate to every $v\in Y$, giving
\begin{equation*}
 \norm{\mathsf G_N}_{Y\to X}\le2C,\qquad N\ge1.
\end{equation*}
This contradicts \eqref{eq:generic-sharp-growth}; hence $\mathsf G$ is
unbounded for the ambient $L^1$ norm.
\end{proof}

\begin{remark}\label{rem:graph-core}
Let
\begin{equation*}
 \mathfrak D_{\mathrm{core}}
 =\overline{BV((1,q))}^{\,\norm{\cdot}_{\mathfrak D}}.
\end{equation*}
By \eqref{eq:Green-extension-bounds},
\begin{equation*}
 \overline{\mathsf E_{q,\zeta,\eta}(BV((1,q)))}^{\,L^1(\R_+)}
 =\mathsf E_{q,\zeta,\eta}(\mathfrak D_{\mathrm{core}}).
\end{equation*}
Consequently, the equality
$\mathfrak D_{\mathrm{core}}=\mathfrak D_{q,\zeta,\eta}$ is equivalent to
\begin{equation*}
 \begin{cases}
  \displaystyle
  \cA_{q,\zeta,\eta}
  =\overline{\mathsf E_{q,\zeta,\eta}(BV((1,q)))}^{\,L^1(\R_+)},
  &(\zeta,\eta)\ne(1,1),\\[2mm]
  \displaystyle
  \cA_{q,1,1}
  =\C\Psi_q\oplus
  \overline{\mathsf E_{q,1,1}(BV((1,q)))}^{\,L^1(\R_+)},
  &(\zeta,\eta)=(1,1).
 \end{cases}
\end{equation*}
\end{remark}

\begin{proof}[Proof of Theorem~\ref{thm:intro-main}]
Corollary~\ref{cor:L1-normal} gives the exact normal form for the entire
$L^1$ pre-annihilator.
Proposition~\ref{prop:normal-form} gives an extension of every
$v\in BV((1,q))$ and the asserted uniqueness alternative.  Since
$BV((1,q))$ is infinite-dimensional, so is the normalized
pre-annihilator.  Scaling back from $F$ to $f$ completes the proof.
\end{proof}

\medskip
\noindent
We have therefore settled the infinite-dimensionality part of
\cite[Open Problem~1.4]{GM2026} for arbitrary shifts and every $q>1$.
For its characterization part, Corollary~\ref{cor:L1-normal} provides
an exact operator-theoretic normal form in terms of the maximal Green
domain.  An intrinsic function-space description of that domain remains
open.  At the critical endpoint $q=1$,
\cite[Theorem~1.2]{GM2026} proves conditional modulus rigidity and hence
that the critical pre-annihilator has dimension at most one.
Equivalently, it is either zero-dimensional or one-dimensional, and the
cited theorem does not decide between these alternatives.  Since it does
not establish the existence of a nonzero element, no critical
nonuniqueness conclusion is drawn here.  The endpoint lies outside the
present $q>1$ method; in particular, the strict $BV$ contraction used
above is unavailable at $q=1$.

\section*{Acknowledgments}

The author thanks Professor Jingwei Guo.  This work was supported by the
National Natural Science Foundation of China (grant 12341102).

\subsection*{Availability of Data}

No data were used for the research described in the article.

\subsection*{Declarations of Conflict of Interest}

The author declares no conflict of interest.


\begin{thebibliography}{99}

\bibitem{Bagchi2018}
S.~Bagchi,
\emph{Heisenberg uniqueness pairs corresponding to a finite number of
parallel lines},
Adv. Math. \textbf{325} (2018), 814--823.
\href{https://doi.org/10.1016/j.aim.2017.12.012}
{doi:10.1016/j.aim.2017.12.012}.

\bibitem{BakanEtAl2021}
A.~Bakan, H.~Hedenmalm, A.~Montes--Rodr\'{\i}guez, D.~Radchenko and
M.~Viazovska,
\emph{Fourier uniqueness in even dimensions},
Proc. Natl. Acad. Sci. USA \textbf{118} (2021), no.~15,
Art.~e2023227118.
\href{https://doi.org/10.1073/pnas.2023227118}
{doi:10.1073/pnas.2023227118}.

\bibitem{BlasiBabot2013}
D.~Blasi Babot,
\emph{Heisenberg uniqueness pairs in the plane: three parallel lines},
Proc. Amer. Math. Soc. \textbf{141} (2013), no.~11, 3899--3904.
\href{https://doi.org/10.1090/S0002-9939-2013-11678-3}
{doi:10.1090/S0002-9939-2013-11678-3}.

\bibitem{CHM2014}
F.~Canto-Mart\'{\i}n, H.~Hedenmalm and A.~Montes--Rodr\'{\i}guez,
\emph{Perron--Frobenius operators and the Klein--Gordon equation},
J. Eur. Math. Soc. \textbf{16} (2014), no.~1, 31--66.
\href{https://doi.org/10.4171/JEMS/427}{doi:10.4171/JEMS/427}.

\bibitem{Ding1994}
J.~Ding,
\emph{A closed range theorem for the Frobenius--Perron operator and its
application to the spectral analysis},
J. Math. Anal. Appl. \textbf{184} (1994), no.~1, 156--167.
\href{https://doi.org/10.1006/jmaa.1994.1191}
{doi:10.1006/jmaa.1994.1191}.

\bibitem{DingDuLi1994}
J.~Ding, Q.~Du and T.~Y.~Li,
\emph{The spectral analysis of Frobenius--Perron operators},
J. Math. Anal. Appl. \textbf{184} (1994), no.~2, 285--301.
\href{https://doi.org/10.1006/jmaa.1994.1200}
{doi:10.1006/jmaa.1994.1200}.

\bibitem{Giri2020}
D.~K.~Giri,
\emph{Fourier nonuniqueness sets for the hyperbola and the
Perron--Frobenius operators},
arXiv:2009.09516v1 (2020).
\href{https://doi.org/10.48550/arXiv.2009.09516}
{doi:10.48550/arXiv.2009.09516}.

\bibitem{GM2024}
D.~Giri and R.~Manna,
\emph{Revisit on Heisenberg uniqueness pair for the hyperbola},
Math. Z. \textbf{306} (2024), no.~3, Art.~39.
\href{https://doi.org/10.1007/s00209-024-03443-6}
{doi:10.1007/s00209-024-03443-6}.

\bibitem{GM2026}
D.~Giri and R.~Manna,
\emph{On the Heisenberg uniqueness pairs for a branch of the hyperbola},
J. Math. Anal. Appl. \textbf{561} (2026), no.~1, Art.~130618.
\href{https://doi.org/10.1016/j.jmaa.2026.130618}
{doi:10.1016/j.jmaa.2026.130618}.

\bibitem{GR2021}
D.~Giri and R.~Rawat,
\emph{Heisenberg uniqueness pairs for the hyperbola},
Bull. Lond. Math. Soc. \textbf{53} (2021), no.~1, 16--25.
\href{https://doi.org/10.1112/blms.12391}{doi:10.1112/blms.12391}.

\bibitem{GRCorr2022}
D.~Giri and R.~Rawat,
\emph{Corrigendum: Heisenberg uniqueness pairs for the hyperbola},
Bull. Lond. Math. Soc. \textbf{54} (2022), no.~5, 2041--2043.
\href{https://doi.org/10.1112/blms.12730}{doi:10.1112/blms.12730}.

\bibitem{GiriSrivastava2017}
D.~Giri and R.~K.~Srivastava,
\emph{Heisenberg uniqueness pairs for some algebraic curves in the plane},
Adv. Math. \textbf{310} (2017), 993--1016.
\href{https://doi.org/10.1016/j.aim.2017.02.019}
{doi:10.1016/j.aim.2017.02.019}.

\bibitem{GoncalvesRamos2022}
F.~Gon\c{c}alves and J.~P.~G.~Ramos,
\emph{A note on discrete Heisenberg uniqueness pairs for the parabola},
Bull. Sci. Math. \textbf{174} (2022), Art.~103095.
\href{https://doi.org/10.1016/j.bulsci.2021.103095}
{doi:10.1016/j.bulsci.2021.103095}.

\bibitem{GrochenigJaming2020}
K.~Gr\"ochenig and P.~Jaming,
\emph{The Cram\'er--Wold theorem on quadratic surfaces and Heisenberg
uniqueness pairs},
J. Inst. Math. Jussieu \textbf{19} (2020), no.~1, 117--135.
\href{https://doi.org/10.1017/S1474748017000457}
{doi:10.1017/S1474748017000457}.

\bibitem{Corr2026}
H.~Hedenmalm,
\emph{Corrigendum to The Klein--Gordon equation, the Hilbert transform,
and dynamics of Gauss-type maps},
J. Eur. Math. Soc. \textbf{28} (2026), no.~1, 455--457.
\href{https://doi.org/10.4171/JEMS/1689}{doi:10.4171/JEMS/1689}.

\bibitem{HM2011}
H.~Hedenmalm and A.~Montes--Rodr\'{\i}guez,
\emph{Heisenberg uniqueness pairs and the Klein--Gordon equation},
Ann. of Math. (2) \textbf{173} (2011), no.~3, 1507--1527.
\href{https://doi.org/10.4007/annals.2011.173.3.6}
{doi:10.4007/annals.2011.173.3.6}.

\bibitem{HM2020}
H.~Hedenmalm and A.~Montes--Rodr\'{\i}guez,
\emph{The Klein--Gordon equation, the Hilbert transform, and dynamics
of Gauss-type maps},
J. Eur. Math. Soc. \textbf{22} (2020), no.~6, 1703--1757.
\href{https://doi.org/10.4171/JEMS/954}{doi:10.4171/JEMS/954}.

\bibitem{HM2021}
H.~Hedenmalm and A.~Montes--Rodr\'{\i}guez,
\emph{The Klein--Gordon equation, the Hilbert transform, and Gauss-type
maps: $H^\infty$ approximation},
J. Anal. Math. \textbf{144} (2021), no.~1, 119--190.
\href{https://doi.org/10.1007/s11854-021-0173-4}
{doi:10.1007/s11854-021-0173-4}.

\bibitem{HMHyperbolic2026}
H.~Hedenmalm and A.~Montes--Rodr\'{\i}guez,
\emph{Hyperbolic Fourier series and the Klein--Gordon equation},
arXiv:2401.06871v3 (2026).
\href{https://doi.org/10.48550/arXiv.2401.06871}
{doi:10.48550/arXiv.2401.06871}.

\bibitem{Hennion1993}
H.~Hennion,
\emph{Sur un th\'eor\`eme spectral et son application aux noyaux
lipchitziens},
Proc. Amer. Math. Soc. \textbf{118} (1993), no.~2, 627--634.
\href{https://doi.org/10.1090/S0002-9939-1993-1129880-8}
{doi:10.1090/S0002-9939-1993-1129880-8}.

\bibitem{JamingKellay2018}
P.~Jaming and K.~Kellay,
\emph{A dynamical system approach to Heisenberg uniqueness pairs},
J. Anal. Math. \textbf{134} (2018), no.~1, 273--301.
\href{https://doi.org/10.1007/s11854-018-0010-6}
{doi:10.1007/s11854-018-0010-6}.

\bibitem{KNS2025}
A.~Kulikov, F.~Nazarov and M.~Sodin,
\emph{Fourier uniqueness and non-uniqueness pairs},
J. Math. Phys. Anal. Geom. \textbf{21} (2025), no.~1, 84--130.
\href{https://doi.org/10.15407/mag21.01.04}
{doi:10.15407/mag21.01.04}.

\bibitem{LasotaYorke1973}
A.~Lasota and J.~A.~Yorke,
\emph{On the existence of invariant measures for piecewise monotonic
transformations},
Trans. Amer. Math. Soc. \textbf{186} (1973), 481--488.
\href{https://doi.org/10.1090/S0002-9947-1973-0335758-1}
{doi:10.1090/S0002-9947-1973-0335758-1}.

\bibitem{RadchenkoRamos2024}
D.~Radchenko and J.~P.~G.~Ramos,
\emph{Perturbed lattice crosses and Heisenberg uniqueness pairs},
arXiv:2410.04557v1 (2024).
\href{https://doi.org/10.48550/arXiv.2410.04557}
{doi:10.48550/arXiv.2410.04557}.

\bibitem{RV2019}
D.~Radchenko and M.~Viazovska,
\emph{Fourier interpolation on the real line},
Publ. Math. Inst. Hautes \'Etudes Sci. \textbf{129} (2019), 51--81.
\href{https://doi.org/10.1007/s10240-018-0101-z}
{doi:10.1007/s10240-018-0101-z}.

\bibitem{RS2022}
J.~P.~G.~Ramos and M.~Sousa,
\emph{Fourier uniqueness pairs of powers of integers},
J. Eur. Math. Soc. \textbf{24} (2022), no.~12, 4327--4351.
\href{https://doi.org/10.4171/JEMS/1194}{doi:10.4171/JEMS/1194}.

\bibitem{Rychlik1983}
M.~Rychlik,
\emph{Bounded variation and invariant measures},
Studia Math. \textbf{76} (1983), no.~1, 69--80.
\href{https://doi.org/10.4064/sm-76-1-69-80}
{doi:10.4064/sm-76-1-69-80}.

\bibitem{Sjolin2013}
P.~Sj\"olin,
\emph{Heisenberg uniqueness pairs for the parabola},
J. Fourier Anal. Appl. \textbf{19} (2013), no.~2, 410--416.
\href{https://doi.org/10.1007/s00041-013-9258-5}
{doi:10.1007/s00041-013-9258-5}.

\end{thebibliography}
\end{document}